\documentclass[amstex,12pt]{article}
\usepackage{amsfonts,graphicx,epstopdf}
\usepackage{amssymb,amsmath,amsthm}
\usepackage{txfonts,cite}
\usepackage{color}
\hyphenation{con-firm} \textheight22cm
\setlength{\textwidth}{16.7cm} \addtolength{\topmargin}{-1.9cm}
\addtolength{\oddsidemargin}{-1.6cm}
\addtolength{\evensidemargin}{0cm}
\addtolength{\evensidemargin}{0cm} \setcounter{enumi}{1}
\newtheorem{theorem}{Theorem}[section]
\newtheorem{corollary}{Corollary}[section]
\newtheorem{lemma}{Lemma}[section]

\newtheorem{definition}{Definition}[section]
\newtheorem{remark}{Remark}[section]
\newtheorem{proposition}{Proposition}[section]

\newcommand{\beq}{\begin{equation}}
\newcommand{\eeq}{\end{equation}}
\newcommand{\beqn}{\begin{eqnarray}}
\newcommand{\eeqn}{\end{eqnarray}}

\baselineskip 20pt

\begin{document}

\allowdisplaybreaks

\title{Left fractional Sobolev space via Riemann$-$Liouville derivatives on time scales and its application to a fractional boundary value problem on time scales\thanks{This work is supported by the National Natural Science Foundation of China under Grant No. 11861072.}}
\author{Xing Hu and Yongkun Li\thanks{The corresponding author, Email: yklie@ynu.edu.cn}\\
Department of Mathematics,
Yunnan University\\
Kunming, Yunnan 650091\\
 People's Republic of China}
\date{}
\maketitle{}
\begin{abstract}
 We first prove the equivalence of two definitions of Riemann$-$Liouville fractional integral on time scales, then by the concept of fractional derivative of Riemann$-$Liouville on time scales, we introduce fractional Sobolev spaces, characterize them, define weak fractional derivatives, and show that they coincide with the Riemann$-$Liouville ones on time scales. Next, we prove equivalence of some norms in the introduced spaces and derive their completeness, reflexivity, separability and some imbeddings.  Finally, as an application, by constructing an appropriate variational setting, using the mountain pass theorem and the genus properties, the existence of weak solutions for a class of Kirchhoff-type fractional $p$-Laplacian systems on time scales with boundary condition is studied, and three results of the existence of weak solutions for this problem is obtained.
\end{abstract}
\textbf{Keywords:} Riemann-Liouville derivatives on time scales; fractional Sobolev's spaces on time scales; fractional boundary value problems on time scales; mountain pass theorem; genus properties\\
\textbf{Mathematics Subject Classification:} 34A08, 26A33, 34B15, 34N05

\section{Introduction}
\setcounter{equation}{0}
\indent

To unify the discrete analysis and continuous analysis, and allow a simultaneous treatment of differential and difference equations, Stefan Hilger \cite{t1} proposed the time scale theory and established its related basic theory \cite{t2,t3}. So far, the study of dynamic equations on time scales has attracted worldwide attention.

It is well known that Sobolev space theory is established to study modern differential equation theory and many problems in the field of mathematical analysis. It has become an integral part of analytical mathematics.
In order to study the solvability of boundary value problems of dynamic equations on time scales, Sobolev space theory on time scales is studied in \cite{7,2,2a, XL1}.

On the one hand, in the past few decades, fractional calculus and fractional differential equations have attracted widespread attention in the field of differential equations, as well as in applied mathematics and science. In addition to true mathematical interest and curiosity, this trend is also driven by interesting scientific and engineering applications that have produced fractional differential equation models to better describe (time) memory effects and (space) non-local phenomena \cite{13,14,15,16,17,lj1}. It is the rise of these applications that give new vitality to the field of fractional calculus and fractional differential equations and call for further research in this field.

On the other hand, recently, based on the concept of fractional derivative of Riemann$-$Liouville on time scales \cite{3}, the authors of \cite{XL1} established the fractional Sobolev space on time scales.
However, the authors in the recent work \cite{c1'} pointed out that the definition of fractional integral on time scales proposed in \cite{3} is not the natural one on time scales. And they developed a new notion of Riemann-Liouville fractional integral on time scales, which can well unify the discrete fractional calculus \cite{XL2, XL3} and its continuous counterpart \cite{4}.

Motivated by the above discussion, in order to fix this defect of the fractional Sobolev space on time scales established in \cite{XL1}, in this paper, we want to contribute with the development of this new area on theories of fractional differential equations on time scales.
More precisely, we first show that the concept of Riemann-Liouville fractional integral on time scales from \cite{XL1} coincides with the ones from \cite{6'}, which is significant for us to prove the semigroup properties of Riemann-Liouville fractional integral on time scales. Next, the left fractional Sobolev space in the sense of weak Riemann$-$Liouville derivatives on time scales was constructed via Riemann$-$Liouville derivatives on time scales. Then, as an application of our new theory, we we study the solvability of a class of Kirchhoff-type fractional $p$-Laplacian systems on time scales with boundary condition by using variational methods and the critical point theory.

\section{Preliminaries}
\setcounter{equation}{0}
\indent

In this section, we briefly collect some basic known notations, definitions, and  results that will be used later.

A time scale $\mathbb{T}$
is an arbitrary nonempty closed subset of the real set $\mathbb{R}$ with the topology and ordering inherited from $\mathbb{R}$.
 Throughout this paper, we denote by $\mathbb{T}$ a time scale. We will use the following notations:
$J_{\mathbb{R}}^0=[a,b)$,  $J_{\mathbb{R}}=[a,b]$,
 $J^0=J_{\mathbb{R}}^0\cap{\mathbb{T}}$, $J=J_{\mathbb{R}}\cap{\mathbb{T}}$, $J^k=[a,\rho(b)]\cap\mathbb{T}$.

\begin{definition}\cite{t2}\label{1}
For $t\in\mathbb{T}$ we define the forward jump operator $\sigma:\mathbb{T}\rightarrow\mathbb{T}$ by
$
\sigma(t):=\inf\{s\in\mathbb{T}:s>t\},
$
while the backward jump operator $\rho:\mathbb{T}\rightarrow\mathbb{T}$ is defined by
$
\rho(t):=\sup\{s\in\mathbb{T}:s<t\}.
$
\end{definition}

\begin{remark}\cite{t2}
\begin{itemize}
  \item [$(1)$]
  In Definition \ref{1}, we put $\inf\emptyset=\sup\mathbb{T}$ (i.e., $\sigma(t)=t$ if $\mathbb{T}$ has a maximum $t$) and $\sup\emptyset=\inf\mathbb{T}$ (i.e., $\rho(t)=t$ if $\mathbb{T}$ has a minimum $t$), where $\emptyset$ denotes the empty set.
  \item [$(2)$]
  If $\sigma(t)>t$, we say that $t$ is right$-$scattered, while if $\rho(t)<t$, we say that $t$ is left$-$scattered. Points that are right$-$scattered and left$-$scattered at the same time are called isolated.
  \item [$(3)$]
  If $t<\sup\mathbb{T}$ and $\sigma(t)=t$, we say that $t$ is right$-$dense, while if $t>\inf\mathbb{T}$ and $\rho(t)=t$, we say that $t$ is left$-$dense. Points that are right$-$dense and left$-$dense at the same time are called dense.
  \item[$(4)$]
  The graininess function $\mu:\mathbb{T}\rightarrow[0,\infty)$ is defined by
  $
  \mu(t):=\sigma(t)-t.
  $
  \item [$(5)$]
  The derivative makes use of the set $\mathbb{T}^k$, which is derived from the time scale $\mathbb{T}$ as follows: If $\mathbb{T}$ has a left$-$scattered maximum $M$, then $\mathbb{T}^k:=\mathbb{T}\backslash\{M\}$; otherwise, $\mathbb{T}^k:=\mathbb{T}$.
\end{itemize}
\end{remark}

\begin{definition}\cite{t2}
Assume that $f:\mathbb{T} \rightarrow \mathbb{R}$ is a function and let
$t\in \mathbb{T}^{k}$. Then we define $f^{\Delta}(t)$ to be the
number (provided it exists) with the property that given any
$\varepsilon>0$, there is a neighborhood $U$ of
t (i.e, $U=(t-\delta,t+\delta)\cap \mathbb{T}$ for some
$\delta>0$) such that
\[
\vert f(\sigma(t))-f(s)-f^{\Delta}(t)(\sigma(t)-s)\vert\leq
\varepsilon\vert\sigma(t)-s\vert
\] for all $s\in U$.
We call $f^{\Delta}(t)$ the delta (or Hilger) derivative of f at t.
Moreover, we say that $f$ is delta (or Hilger) differentiable (or in
short: differentiable) on $\mathbb{T}^{k}$ provided $f^{\Delta}(t)$
exists for all $t\in \mathbb{T}^{k}$. The function
$f^{\Delta}:\mathbb{T}^{k} \rightarrow \mathbb{R}$ is then called
the (delta) derivative of $f$ on $\mathbb{T}^{k}$.
\end{definition}

\begin{definition}\cite{t2}
A function $f:\mathbb{T}\rightarrow\mathbb{R}$ is called rd$-$continuous provided it is continuous at right$-$dense points in $\mathbb{T}$ and its left$-$sided limits exist (finite) at left$-$dense points in $\mathbb{T}$. The set of rd$-$continuous functions $f:\mathbb{T}\rightarrow\mathbb{R}$ will be denoted by
$
C_{rd}=C_{rd}(\mathbb{T})=C_{rd}(\mathbb{T},\mathbb{R}).
$
The set of functions $f:\mathbb{T}\rightarrow\mathbb{R}$ that are differentiable and whose derivative is rd$-$continuous is denoted by
$
C_{rd}^1=C_{rd}^1(\mathbb{T})=C_{rd}^1(\mathbb{T},\mathbb{R}).
$
\end{definition}

\begin{theorem}\cite{t3}\label{ft3}
If $a,b\in\mathbb{T}$ and $f,g\in C_{rd}(\mathbb{T})$, then
\begin{eqnarray*}
\int_{J^0}f^\sigma(t)g^\Delta(t)\Delta t=(fg)(b)-(fg)(a)-\int_{J^0}f^\Delta(t)g(t)\Delta t.
\end{eqnarray*}
\end{theorem}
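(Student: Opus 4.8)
The plan is to obtain the formula as a direct consequence of the Leibniz product rule for the delta derivative together with the fundamental theorem of calculus on time scales, both standard in \cite{t2,t3}. First I would establish the product rule: if $f$ and $g$ are delta differentiable at $t\in\mathbb{T}^k$, then
\[
(fg)^\Delta(t)=f^\Delta(t)\,g(t)+f^\sigma(t)\,g^\Delta(t).
\]
To prove this I would work directly from the definition of $f^\Delta(t)$, writing the increment of the product as $f(\sigma(t))g(\sigma(t))-f(s)g(s)=f^\sigma(t)\big(g(\sigma(t))-g(s)\big)+\big(f(\sigma(t))-f(s)\big)g(s)$, then substituting the defining approximations $f(\sigma(t))-f(s)=f^\Delta(t)(\sigma(t)-s)+o(|\sigma(t)-s|)$ and likewise for $g$; letting $s\to t$ and using continuity of $g$ at $t$ collects the coefficient of $(\sigma(t)-s)$ into $f^\sigma(t)g^\Delta(t)+f^\Delta(t)g(t)$. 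Equivalently, one treats the right-scattered case, where $f^\Delta(t)=(f^\sigma(t)-f(t))/\mu(t)$, and the right-dense case separately.

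With the product rule in hand I would rearrange it to isolate exactly the integrand appearing in the statement,
\[
f^\sigma(t)\,g^\Delta(t)=(fg)^\Delta(t)-f^\Delta(t)\,g(t),
\]
and integrate both sides over $J^0=[a,b)\cap\mathbb{T}$. Since $f,g$ and their delta derivatives are rd-continuous, and since $f^\sigma$ is rd-continuous whenever $f$ is, every product occurring here is rd-continuous and hence delta integrable, so the integrals are well defined and integration is linear. Applying the fundamental theorem of calculus on time scales to the antiderivative $fg$ gives $\int_{J^0}(fg)^\Delta(t)\,\Delta t=(fg)(b)-(fg)(a)$, and substituting this into the integrated identity yields the claimed formula.

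The routine core is short, so the only point requiring genuine care is the proof of the product rule, specifically the unified handling of right-dense and right-scattered points: at a right-scattered $t$ the asymmetry between $f^\sigma$ and $f$ is precisely what produces the shift $f^\sigma$ (rather than $f$) in front of $g^\Delta$, and one must verify that the $o(|\sigma(t)-s|)$ remainder terms genuinely vanish in the limit on both branches. Once the product rule is secured in the correct $f^\sigma g^\Delta+f^\Delta g$ form, the remainder of the argument is purely mechanical.
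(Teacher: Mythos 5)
Your proof is correct and is essentially the standard argument from the cited reference \cite{t3}: the paper states this result without proof, and the textbook derivation is exactly the product rule $(fg)^\Delta=f^\Delta g+f^\sigma g^\Delta$ rearranged and integrated over $J^0$, with the fundamental theorem of calculus supplying $\int_{J^0}(fg)^\Delta(t)\,\Delta t=(fg)(b)-(fg)(a)$. The only minor point worth noting is that the hypothesis as stated ($f,g\in C_{rd}$) implicitly presupposes delta differentiability with rd-continuous derivatives, which you correctly assume in order for every integrand to be integrable.
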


\begin{theorem}\cite{t3}\label{LTm3}
If $f$ is $\Delta-$integrable on $a,b\in\mathbb{T}$, then so is $\vert f\vert$, and
\begin{eqnarray*}
\left\vert\int_a^bf(t)\Delta t\right\vert\leq\int_a^b\vert f(t)\vert\Delta t.
\end{eqnarray*}
\end{theorem}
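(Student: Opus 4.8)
The plan is to split the statement into its two assertions and prove each in turn: first that $|f|$ inherits $\Delta$-integrability from $f$, and then the inequality itself, which is a triangle inequality passed through the integral. I would work from the Riemann $\Delta$-sum description of the $\Delta$-integral, in which $f$ is $\Delta$-integrable on $[a,b]\cap\mathbb{T}$ precisely when its upper and lower $\Delta$-sums over tagged partitions can be brought arbitrarily close. For the integrability of $|f|$, the key elementary fact is the reverse triangle inequality $\big| |x|-|y| \big|\le |x-y|$, which shows that on each partition subinterval the oscillation of $|f|$ does not exceed that of $f$. Consequently the gap between the upper and lower $\Delta$-sums of $|f|$ is dominated, subinterval by subinterval (each of nonnegative length), by the corresponding gap for $f$; since the latter can be made smaller than any prescribed $\varepsilon>0$, so can the former, and $|f|$ is $\Delta$-integrable. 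Equivalently, if one adopts the regulated viewpoint, $|f|=\phi\circ f$ with $\phi(x)=|x|$ continuous, so $|f|$ lies in the same integrability class as $f$ automatically.

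For the inequality, I would argue directly at the level of approximating sums. For any tagged partition $a=t_0<t_1<\cdots<t_n=b$ of $[a,b]\cap\mathbb{T}$ with tags $\xi_i$, the ordinary finite triangle inequality together with the nonnegativity of the increments $t_{i+1}-t_i\ge 0$ gives
\[
\left| \sum_{i} f(\xi_i)\,(t_{i+1}-t_i) \right| \le \sum_{i} |f(\xi_i)|\,(t_{i+1}-t_i).
\]
Letting the mesh tend to zero, the left-hand side converges to $\big|\int_a^b f(t)\,\Delta t\big|$ and the right-hand side to $\int_a^b |f(t)|\,\Delta t$ (using the integrability established above), which yields the claim. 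Alternatively, one can invoke monotonicity of the $\Delta$-integral applied to the pointwise bounds $-|f(t)|\le f(t)\le |f(t)|$, obtaining $-\int_a^b|f(t)|\,\Delta t\le \int_a^b f(t)\,\Delta t\le \int_a^b|f(t)|\,\Delta t$, which is the same conclusion.

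The only genuinely substantive step is the integrability of $|f|$; the inequality itself is then routine. In the time-scale setting I expect the main thing to watch is that the argument must cover both right-dense and right-scattered points uniformly: at a right-scattered point the increment contributes a term of the form $|f(t)|\,\mu(t)$ with $\mu(t)>0$, while at right-dense points one is in the usual continuous regime. The reverse triangle inequality and the nonnegativity of $\mu$ handle both cases without any case analysis on the structure of $\mathbb{T}$, so no separate treatment of the discrete and continuous parts of the time scale is required.
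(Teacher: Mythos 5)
The paper does not prove this statement: it is quoted as a known result from \cite{t3} and used as a preliminary, so there is no in-paper proof to compare against. Your argument is correct and is essentially the standard Darboux/Riemann $\Delta$-sum proof from that reference --- the oscillation bound $\sup_I|f|-\inf_I|f|\leq \sup_I f-\inf_I f$ via the reverse triangle inequality gives integrability of $\vert f\vert$ through the Cauchy criterion, and the inequality then follows from monotonicity of the $\Delta$-integral applied to $-\vert f\vert\leq f\leq\vert f\vert$ (the cleaner of your two suggested routes, since it avoids any discussion of mesh refinement at right-scattered points).
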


\begin{definition}\cite{3}
Let $J$ denote a closed bounded interval in $\mathbb{T}$. A function  $F:J\rightarrow\mathbb{R}$ is called a delta antiderivative of function $f:J\rightarrow\mathbb{R}$ provided $F$ is continuous on $J$, delta differentiable at $J$, and $F^\Delta(t)=f(t)$ for all $t\in J$. Then, we define the $\Delta-$integral of $f$ from $a$ to $b$ by
$
\int_a^bf(t)\Delta t:=F(b)-F(a).
$
\end{definition}

\begin{theorem}\cite{L5}\label{LTm4}
The convolution is commutative and associative, that is, for $f,\,g,\,h\in \mathcal{F}$,
\begin{eqnarray*}
f\ast g=g\ast f,\quad (f\ast g)\ast h=f\ast(g\ast h).
\end{eqnarray*}
\end{theorem}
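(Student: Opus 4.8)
The plan is to prove both identities by passing to the Laplace (Hilger) transform on the time scale, where the convolution linearizes into ordinary pointwise multiplication. Recall that the convolution on $\mathbb{T}$ is built from the shift $\hat f(t,s)$ of $f$ via $(f\ast g)(t)=\int_{t_0}^{t}\hat f(t,\sigma(s))\,g(s)\,\Delta s$, and that it satisfies the convolution theorem $\mathcal{L}\{f\ast g\}=\mathcal{L}\{f\}\cdot\mathcal{L}\{g\}$ on a common domain of convergence. Since multiplication of scalar-valued transforms is commutative and associative in the most elementary sense, the two algebraic properties of $\ast$ should be read off from this identity, provided the transform separates the points of $\mathcal{F}$.

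First I would establish commutativity. Applying $\mathcal{L}$ and the convolution theorem to each side gives
\[
\mathcal{L}\{f\ast g\}=\mathcal{L}\{f\}\,\mathcal{L}\{g\}=\mathcal{L}\{g\}\,\mathcal{L}\{f\}=\mathcal{L}\{g\ast f\},
\]
the middle equality being nothing but commutativity of multiplication of complex numbers, valid at every point where both transforms converge. Because the Laplace transform on time scales is injective on $\mathcal{F}$ (its uniqueness theorem), the equality $\mathcal{L}\{f\ast g\}=\mathcal{L}\{g\ast f\}$ forces $f\ast g=g\ast f$.

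For associativity I would iterate the convolution theorem. Two applications yield
\[
\mathcal{L}\{(f\ast g)\ast h\}=\mathcal{L}\{f\ast g\}\,\mathcal{L}\{h\}=\mathcal{L}\{f\}\,\mathcal{L}\{g\}\,\mathcal{L}\{h\},
\]
and likewise $\mathcal{L}\{f\ast(g\ast h)\}=\mathcal{L}\{f\}\,(\mathcal{L}\{g\}\,\mathcal{L}\{h\})$; associativity of ordinary multiplication makes the two right-hand sides equal, and injectivity again transfers this back to $(f\ast g)\ast h=f\ast(g\ast h)$.

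The main obstacle is not the algebra but the analytic bookkeeping that legitimizes the transform argument: one must check that $f,g,h\in\mathcal{F}$ guarantees a common half-plane (or common regressive spectral set) on which all the relevant transforms and products converge, and one must invoke the uniqueness theorem for the time-scale Laplace transform in exactly the generality of $\mathcal{F}$. If either ingredient is unavailable, the fallback is a direct computation: write out $(f\ast g)\ast h$ as an iterated $\Delta$-integral against the shift, deduce commutativity from the symmetry of the shifting problem, and obtain associativity by interchanging the order of the two $\Delta$-integrations via a Fubini-type theorem on time scales. That direct route is elementary in spirit but demands the shift identities and an integration-order exchange that are considerably more delicate than the transform proof, which is why I would prefer the transform approach and treat the convergence and uniqueness hypotheses as the crux.
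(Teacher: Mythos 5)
The paper itself offers no proof of this statement: it is imported verbatim from the cited source \cite{L5} as a preliminary, so the only meaningful comparison is with the argument in that source. Your transform route is essentially the standard one for \emph{commutativity}: because the time-scale convolution $(f\ast g)(t)=\int_{t_0}^{t}\hat f(t,\sigma(s))\,g(s)\,\Delta s$ is built asymmetrically from the shift of $f$ alone, commutativity is genuinely non-obvious, and passing through $\mathcal{L}\{f\ast g\}=\mathcal{L}\{f\}\mathcal{L}\{g\}$ is exactly how it is obtained there. You have also correctly located the crux; the problem is that the crux is a genuine gap rather than bookkeeping.

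Concretely: the only inversion statement available in this paper (Theorem \ref{t1'}) requires $F(z)$ to be analytic in a half-plane, to vanish uniformly at infinity, and to have finitely many regressive poles, so injectivity of $\mathcal{L}_{\mathbb{T}}$ on all of $\mathcal{F}$ is not a freely quotable fact, and without it neither of your displayed chains of equalities yields the pointwise identities. This is precisely why the source treats \emph{associativity} differently: it first derives the formula $\widehat{f\ast g}(t,s)=\int_{s}^{t}\hat f(t,\sigma(u))\,\hat g(u,s)\,\Delta u$ for the shift of a convolution and then obtains $(f\ast g)\ast h=f\ast(g\ast h)$ by a direct interchange of the two $\Delta$-integrals (Fubini on time scales), with no appeal to uniqueness of the transform. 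Your fallback plan points in that direction, but the phrase ``deduce commutativity from the symmetry of the shifting problem'' does not correspond to an actual argument --- the shifting problem has no such symmetry, which is exactly why the transform is indispensable for commutativity in the first place. The combination that actually closes is: transform argument for commutativity, with the convolution theorem and the uniqueness theorem for $\mathcal{L}_{\mathbb{T}}$ on $\mathcal{F}$ stated and verified as hypotheses; direct shift-plus-Fubini computation for associativity.
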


\begin{proposition}\cite{10}\label{2}
$f$ is an increasing continuous function on  $J$. If $F$ is the extension of $f$ to the real interval $J_{\mathbb{R}}$ given by
\begin{equation*}
F(s):=
\left\{
\begin{aligned}
f(s)&,&\quad if\,\,s\in \mathbb{T},\\
f(t)&,&\quad if\,\,s\in (t,\sigma(t))\notin \mathbb{T},
\end{aligned}
\right.
\end{equation*}
then
$\int_a^bf(t)\Delta t\leq\int_a^bF(t)dt.$
\end{proposition}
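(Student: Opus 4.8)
The plan is to reduce the $\Delta$-integral on the left to an ordinary Riemann integral by exploiting the structure of $\mathbb{T}\cap[a,b]$ as the points of $\mathbb{T}$ together with the open gaps $(t,\sigma(t))$ generated by the right-scattered points $t$, and then to use the monotonicity of $f$ to obtain the one-sided comparison. The first thing I would record is the role of the hypothesis that $f$ is increasing: it forces the extension $F$ to be nondecreasing on the whole real interval $[a,b]$. Indeed, on each gap $(t,\sigma(t))$ the value $F=f(t)$ is pinched between $f(t)$ and $f(\sigma(t))$, and a short case check (both arguments in $\mathbb{T}$; one or both in a single gap; in two distinct gaps) shows $F(s)\le F(s')$ whenever $s\le s'$. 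Since a monotone function on a compact interval is Riemann integrable, this already guarantees that the right-hand side $\int_a^b F(t)\,dt$ is well defined, and this is the first place the monotonicity is genuinely needed.

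Next I would approximate $\int_a^b f(t)\,\Delta t$ by $\Delta$-Riemann sums with left endpoints as tags. For a partition $a=\tau_0<\tau_1<\cdots<\tau_n=b$ with nodes in $\mathbb{T}$ that respects the scattered structure (so that each right-scattered $t$ occurs together with $\sigma(t)$ as consecutive nodes), the sum $\sum_{j} f(\tau_{j-1})(\tau_j-\tau_{j-1})$ converges to $\int_a^b f(t)\,\Delta t$ as the partition is refined: on a scattered step the term is $f(t)\mu(t)=\int_{t}^{\sigma(t)}f\,\Delta s$, exactly the $\Delta$-integral contribution, while on the right-dense portions the $\Delta$-integral agrees with the ordinary integral and the left-endpoint sums of the continuous $f$ converge to it. Because every node lies in $\mathbb{T}$ we have $f(\tau_{j-1})=F(\tau_{j-1})$, and since $F$ is nondecreasing, $F(\tau_{j-1})(\tau_j-\tau_{j-1})\le\int_{\tau_{j-1}}^{\tau_j}F(t)\,dt$. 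Summing over $j$ gives $\sum_j f(\tau_{j-1})(\tau_j-\tau_{j-1})\le\int_a^b F(t)\,dt$ for every such partition, and passing to the limit yields the claimed inequality $\int_a^b f(t)\,\Delta t\le\int_a^b F(t)\,dt$.

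The main obstacle is the limiting argument when $\mathbb{T}$ has infinitely many right-scattered points or a nowhere-dense (Cantor-like) portion of positive measure: one must show that partitions adapted to the scattered structure can be refined so that the left-endpoint sums both converge to the $\Delta$-integral and remain bounded above by $\int_a^b F$. Here the continuity and boundedness of $f$ on the compact set $\mathbb{T}\cap[a,b]$, together with the fact that the total length of the gaps is at most $b-a$, provide the uniform control needed to interchange limit and summation over the countable family of gaps. In fact the same bookkeeping, carried out with equalities rather than the monotone estimate, shows that the two integrals coincide, so the stated inequality is not sharp; but the monotone one-sided argument above is the most economical route to exactly what the proposition asserts.
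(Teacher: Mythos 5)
The paper offers no proof of this proposition: it is quoted directly from \cite{10}, so there is nothing internal to compare your argument against, and it has to be judged on its own merits. On those merits it is correct. The two pillars of your argument --- that the extension $F$ is nondecreasing on all of $[a,b]$ (your case check is right, and this is where the monotonicity of $f$ enters), and that left-endpoint Riemann $\Delta$-sums $\sum_j f(\tau_{j-1})(\tau_j-\tau_{j-1})$ of the continuous function $f$ converge to $\int_a^b f(t)\Delta t$ while each term equals $F(\tau_{j-1})(\tau_j-\tau_{j-1})\le\int_{\tau_{j-1}}^{\tau_j}F(s)\,ds$ --- do combine to give the inequality in the limit. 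One phrase needs repair: when $\mathbb{T}$ has infinitely many right-scattered points you cannot choose a partition in which \emph{every} such $t$ occurs together with $\sigma(t)$ as consecutive nodes; what the standard partition lemma for the Riemann $\Delta$-integral (Guseinov) supplies is that for each $\delta>0$ there is a partition with nodes in $\mathbb{T}$ in which every subinterval either has length at most $\delta$ or is a single jump $[\tau_{j-1},\sigma(\tau_{j-1})]$, and that is all your argument actually uses. With that correction the ``main obstacle'' you describe in your last paragraph dissolves: each Riemann sum is individually bounded by $\int_a^b F(s)\,ds$, so no interchange of limit and summation over the countable family of gaps is ever required. Your closing remark is also accurate --- since the gap $(t,\sigma(t))$ contributes $f(t)\mu(t)$ to $\int_a^b F(s)\,ds$, which is precisely $\int_t^{\sigma(t)}f(s)\Delta s$, the two integrals in fact coincide for this particular extension, and the proposition records only the one-sided estimate because that is all that is needed where it is applied.
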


Motivated by Definition 4 in \cite{c1'} and Definition 2.1 in \cite{L2}, we can present the right Riemann$-$Liouville fractional integral and derivative on time scales as follows:

\begin{definition}\cite{c1'} (Fractional integral on time scales)\label{3} Suppose  $h$ is an integrable function on $J$. Let $0<\alpha\leq1$. Then the left fractional integral of order $\alpha$ of $h$ is defined by
\begin{eqnarray}\label{2.5}
_a^{\mathbb{T}}I_t^\alpha h(t):=\int_a^t\frac{(t-\sigma(s))^{\alpha-1}}{\Gamma(\alpha)}h(s)\Delta s.
\end{eqnarray}
The right fractional integral of order $\alpha$ of $h$ is defined by
\begin{eqnarray*}
_t^{\mathbb{T}}I_b^\alpha h(t):=\int_t^b\frac{(s-\sigma(t))^{\alpha-1}}{\Gamma(\alpha)}h(s)\Delta s,
\end{eqnarray*}
where $\Gamma$ is the gamma function.
\end{definition}

\begin{definition}\cite{c1'} (Riemann$-$Liouville fractional derivative on time scales)\label{4}  Let  $t\in\mathbb{T}$, $0<\alpha\leq1$, and $h:\mathbb{T}\rightarrow\mathbb{R}$. The left Riemann$-$Liouville fractional derivative of order $\alpha$ of $h$ is defined by
\begin{eqnarray}\label{2.6}
_a^{\mathbb{T}}D_t^\alpha h(t):=\bigg({_a^{\mathbb{T}}}I_t^{1-\alpha} h(t)\bigg)^\Delta=\frac{1}{\Gamma(1-\alpha)}\bigg(\int_a^t(t-\sigma(s))^{-\alpha}h(s)\Delta s\bigg)^\Delta.
\end{eqnarray}
The right Riemann$-$Liouville fractional derivative of order $\alpha$ of $h$ is defined by
\begin{eqnarray*}
_t^{\mathbb{T}}D_b^\alpha h(t):=-\bigg({_t^{\mathbb{T}}}I_b^{1-\alpha} h(t)\bigg)^\Delta=\frac{-1}{\Gamma(1-\alpha)}\bigg(\int_t^b(s-\sigma(t))^{-\alpha}h(s)\Delta s\bigg)^\Delta.
\end{eqnarray*}
\end{definition}

Motivated by Definition 4 and Equation (21) in \cite{c1'} and Theorem 2.1 in \cite{L3}, we can present the right Caputo fractional derivative on time scales as follows:

\begin{definition}(Caputo fractional derivative on time scales)\label{cd}  Let  $t\in\mathbb{T}$, $0<\alpha\leq1$  and $h:\mathbb{T}\rightarrow\mathbb{R}$. The left Caputo fractional derivative of order $\alpha$ of $h$ is defined by
\begin{eqnarray*}
{_a^{\mathbb{T}\,C}}D_t^\alpha h(t):={_a^{\mathbb{T}}}I_t^{1-\alpha} h^\Delta(t)=\frac{1}{\Gamma(1-\alpha)}\int_a^t(t-\sigma(s))^{-\alpha}h^\Delta(s)\Delta s.
\end{eqnarray*}
The right Caputo fractional derivative of order $\alpha$ of $h$ is defined by
\begin{eqnarray*}
{_t^{\mathbb{T}\,C}}D_b^\alpha h(t):=-{_t^{\mathbb{T}}}I_b^{1-\alpha} h^\Delta(t)=\frac{-1}{\Gamma(1-\alpha)}\int_t^b(s-\sigma(t))^{-\alpha}h^\Delta(s)\Delta s.
\end{eqnarray*}
\end{definition}

\begin{definition}\cite{3'}\label{d2'}
For $f:\mathbb{T}\rightarrow\mathbb{R}$, the time scale or generalized Laplace transform of $f$, denoted by $\mathcal{L}_{\mathbb{T}}\{f\}$ or $F(z)$, is given by
\begin{eqnarray*}
\mathcal{L}_{\mathbb{T}}\{f\}(z)=F(z):=\int_0^\infty f(t)g^\sigma(t)\Delta t,
\end{eqnarray*}
where $g(t)=e_{\ominus z}(t,0)$.
\end{definition}

\begin{theorem}\cite{3'} (Inversion formula of the Laplace transform)\label{t1'}
Suppose that $F(z)$ is analytic in the region $Re_\mu(z)>Re_\mu(c)$ and $F(z)\rightarrow0$ uniformly as $\vert z\vert\rightarrow\infty$ in this region. Suppose $F(z)$ has finitely many regressive poles of finite order $\{z_1,z_2,\ldots,z_n\}$ and $\widetilde{F}_{\mathbb{R}}(z)$ is the transform of the function $\widetilde{f}(t)$ on $\mathbb{R}$ that corresponds to the transform $F(z)=F_{\mathbb{T}}(z)$ of $f(t)$ on $\mathbb{T}$, If
\begin{eqnarray*}
\int_{c-i\infty}^{c+i\infty}\vert \widetilde{F}_{\mathbb{R}}(z)\vert\vert dz\vert<\infty,
\end{eqnarray*}
then
\begin{eqnarray*}
f(t)=\sum_{i=1}^nRes_{z=z_i}e_z(t,0)F(z),
\end{eqnarray*}
has transform $F(z)$ for all $z$ with $Re(z)>c$.
\end{theorem}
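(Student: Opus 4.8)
The plan is to reduce the statement to the classical Fourier--Mellin (Bromwich) inversion of the Laplace transform, with the ordinary kernel $e^{zt}$ replaced by the time-scale exponential $e_z(t,0)$, and then to evaluate the resulting vertical-line integral by the residue theorem. Concretely, I would first establish the complex inversion representation
\[
f(t)=\frac{1}{2\pi i}\int_{c-i\infty}^{c+i\infty}e_z(t,0)F(z)\,dz,
\]
valid for $t\in\mathbb{T}$ because $F$ is analytic on the Hilger half-plane $Re_\mu(z)>Re_\mu(c)$, so the contour $\{Re(z)=c\}$ lies inside the region of convergence of $\mathcal{L}_{\mathbb{T}}$. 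The correspondence hypothesis is what makes this step legitimate: $\widetilde{F}_{\mathbb{R}}$ is the ordinary Laplace transform of a shadow function $\widetilde{f}$ on $\mathbb{R}$ associated with $F=F_{\mathbb{T}}$, and the assumption $\int_{c-i\infty}^{c+i\infty}|\widetilde{F}_{\mathbb{R}}(z)|\,|dz|<\infty$ guarantees that the inversion integral converges absolutely; invoking the classical inversion theorem for $\widetilde{f}$ and transferring back to $\mathbb{T}$ through the correspondence yields the displayed Bromwich formula.

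Next I would carry out the residue computation. Close the vertical segment from $c-iR$ to $c+iR$ with a large semicircular arc $\Gamma_R$ in the left half-plane, so that for $R$ large the closed contour encloses exactly the finite set of regressive poles $\{z_1,\dots,z_n\}$, each of finite order. By the residue theorem,
\[
\frac{1}{2\pi i}\oint e_z(t,0)F(z)\,dz=\sum_{i=1}^{n}\mathrm{Res}_{\,z=z_i}\,e_z(t,0)F(z).
\]
It then remains to show that the arc contribution tends to zero as $R\to\infty$, i.e. $\frac{1}{2\pi i}\int_{\Gamma_R}e_z(t,0)F(z)\,dz\to 0$; combined with the Bromwich representation this gives the asserted formula for $f(t)$.

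The main obstacle is precisely this last vanishing estimate, which is the time-scale analogue of Jordan's lemma. On $\mathbb{R}$ one controls $e^{zt}$ on the left arc directly, but here the kernel has the cylinder-transformation structure $e_z(t,0)=\exp\!\big(\int_0^t\xi_{\mu(s)}(z)\,\Delta s\big)$, where $\xi_{h}(z)=\frac1h\mathrm{Log}(1+hz)$ for $h>0$ and $\xi_0(z)=z$, so its magnitude on $\Gamma_R$ depends on the graininess $\mu$ along $[0,t]$ rather than on a single exponential. The estimate must show that this growth is dominated by the uniform decay $F(z)\to 0$ as $|z|\to\infty$, so that the product integrates to zero over the arc; the hypotheses that the poles be regressive and that the region be the Hilger half-plane $Re_\mu(z)>Re_\mu(c)$ are exactly what prevent the factors $1+\mu(s)z$ from vanishing and keep $e_z(t,0)$ analytic and suitably bounded along the contour. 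Once this bound is in place, dividing by $2\pi i$ and letting $R\to\infty$ completes the proof; the remaining work (interchanging limits, verifying absolute convergence on the arc) is routine given the decay and integrability assumptions.
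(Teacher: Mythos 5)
This theorem is quoted in the Preliminaries from the cited source (Davis, Gravagne, Jackson, Marks and Ramos, ``The Laplace transform on time scales revised''); the paper itself offers no proof, so your attempt can only be measured against the argument in that reference. There the proof does \emph{not} run a Bromwich contour in the time-scale setting: it applies the classical complex inversion to $\widetilde{F}_{\mathbb{R}}$ on $\mathbb{R}$ (where Jordan's lemma for $e^{zt}$ is available and the integrability hypothesis $\int_{c-i\infty}^{c+i\infty}\vert\widetilde{F}_{\mathbb{R}}(z)\vert\,\vert dz\vert<\infty$ does its work), obtains $\widetilde{f}(t)=\sum_i \mathrm{Res}_{z=z_i}\,e^{zt}\widetilde{F}_{\mathbb{R}}(z)$, and then transfers this back to $\mathbb{T}$ through the stated correspondence of transform pairs, essentially a partial-fraction decomposition of $F$ into terms $(z-z_i)^{-k}$ whose time-scale preimages (generalized monomials times $e_{z_i}(\cdot,0)$) are known. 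The hypothesis that $\widetilde{F}_{\mathbb{R}}$ ``corresponds to'' $F_{\mathbb{T}}$ is precisely the bridge for that transfer; in your write-up it is demoted to a convergence remark, which misreads its role.

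The genuine gap in your route is the arc estimate, which you flag as ``the main obstacle'' but defer as if it were routine. It is not, and as stated it fails. On a time scale with scattered points the kernel is $e_z(t,0)=\prod_{s\in[0,t)}\bigl(1+\mu(s)z\bigr)$ (a polynomial in $z$ of degree equal to the number of scattered points, and worse in general), so it grows without bound on any large arc; the hypothesis $F(z)\rightarrow0$ uniformly carries no rate and therefore cannot force $\int_{\Gamma_R}e_z(t,0)F(z)\,dz\rightarrow0$. Moreover, the region $Re_\mu(z)>Re_\mu(c)$ is the exterior of a Hilger circle rather than a vertical half-plane, so ``close the vertical segment with a semicircle in the left half-plane enclosing exactly the poles'' does not describe a legitimate contour relative to the domain of analyticity, and the singularities of $z\mapsto e_z(t,0)$ at the non-regressive points $z=-1/\mu(s)$ sit exactly in the region you propose to sweep. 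Finally, the representation $f(t)=\frac{1}{2\pi i}\int_{c-i\infty}^{c+i\infty}e_z(t,0)F(z)\,dz$ is itself asserted rather than proved for $\mathcal{L}_{\mathbb{T}}$; establishing it directly is essentially as hard as the theorem. The repair is to follow the cited source: invert on $\mathbb{R}$, where all of these contour issues are classical, and pull the residue formula back to $\mathbb{T}$ via the transform-pair correspondence.
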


\begin{definition}\cite{6'}\label{d3'} (Riemann$-$Liouville fractional integral on time scales)
Let $\alpha>0$, $\mathbb{T}$ be a time scale, and $f:\mathbb{T}\rightarrow\mathbb{R}$. The left Riemann$-$Liouville fractional integral of $f$ of order $\alpha$ on the time scale $\mathbb{T}$, denoted by $I_{\mathbb{T}}^\alpha f$, is defined by
\begin{eqnarray*}
I_{\mathbb{T}}^\alpha f(t)=\mathcal{L}_{\mathbb{T}}^{-1}\left[\frac{F(z)}{z^\alpha}\right](t).
\end{eqnarray*}
\end{definition}

\begin{theorem}\cite{c1'} (Cauchy Result on Time Scales)\label{t2'}
Let $n\in\{1,2\}$, $\mathbb{T}$ be a time scale with $a,t_1,\ldots,t_n\in\mathbb{T}$, $t_i>a$, $i=1,\ldots,n$, and $f$ an integrable function on $\mathbb{T}$. Then,
\begin{eqnarray*}
\int_{a}^{t_n}\ldots\int_{a}^{t_1} f(t_0)\Delta t_0\ldots\Delta t_{n-1}=\frac{1}{(n-1)!} \int_{a}^{t_n}(t_n-\sigma(s))^{n-1}\Delta s.
\end{eqnarray*}
\end{theorem}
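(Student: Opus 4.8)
The plan is to treat the two admissible values $n=1$ and $n=2$ separately, since the restriction $n\in\{1,2\}$ means no genuine induction is required. For $n=1$ the claimed identity reduces to $\int_a^{t_1}f(t_0)\Delta t_0=\int_a^{t_1}(t_1-\sigma(s))^{0}f(s)\Delta s$, which is an immediate tautology once one observes that $(t_1-\sigma(s))^{0}=1$ and $0!=1$; so the only real content is the case $n=2$.

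For $n=2$ I would first introduce the inner antiderivative $F(t_1):=\int_a^{t_1}f(t_0)\Delta t_0$, which by the definition of the $\Delta$-integral satisfies $F^{\Delta}(t_1)=f(t_1)$ and $F(a)=0$. The goal then becomes the evaluation of $\int_a^{t_2}F(t_1)\Delta t_1$, and the natural tool is the integration-by-parts formula of Theorem \ref{ft3}.

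The key step is to choose the two factors in Theorem \ref{ft3} so that the desired kernel $t_2-\sigma(s)$ emerges as a shifted (that is, $\sigma$-composed) term. Setting $u(s):=t_2-s$ and letting $F$ play the role of $g$, I note that $u^{\sigma}(s)=t_2-\sigma(s)$ and $u^{\Delta}(s)=-1$. Applying Theorem \ref{ft3} on $J^0=[a,t_2)\cap\mathbb{T}$ then yields
\begin{eqnarray*}
\int_a^{t_2}(t_2-\sigma(s))f(s)\Delta s=(uF)(t_2)-(uF)(a)+\int_a^{t_2}F(s)\Delta s,
\end{eqnarray*}
where the sign in front of the last integral is positive because $u^{\Delta}\equiv-1$. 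Both boundary terms vanish: $(uF)(t_2)=0$ since $u(t_2)=t_2-t_2=0$, and $(uF)(a)=0$ since $F(a)=0$. Renaming the integration variable, the surviving term $\int_a^{t_2}F(s)\Delta s$ is precisely the iterated integral $\int_a^{t_2}\int_a^{t_1}f(t_0)\Delta t_0\Delta t_1$, which establishes the $n=2$ identity.

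The main obstacle, modest as it is, lies in making the correct identification in the integration-by-parts step: one must recognize that the kernel is generated by $u^{\sigma}$ rather than by $u$ itself, and then verify that with the choice $u(s)=t_2-s$ both endpoint contributions collapse to zero. A secondary point worth checking is that $F$ is rd-continuous (it is, being a $\Delta$-integral of an integrable function) so that Theorem \ref{ft3} applies; beyond this everything reduces to routine bookkeeping.
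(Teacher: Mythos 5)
The paper does not prove this theorem at all: it is imported verbatim from the cited reference, so there is no in-paper argument to compare yours against. Your proof is correct. Note first that you have silently (and correctly) repaired a typo in the statement as printed, which omits the factor $f(s)$ from the right-hand integrand. The $n=1$ case is indeed vacuous, and your $n=2$ computation is sound: with $u(s)=t_2-s$ and $g=F$, Theorem \ref{ft3} gives
\begin{equation*}
\int_a^{t_2}(t_2-\sigma(s))f(s)\Delta s=(uF)(t_2)-(uF)(a)+\int_a^{t_2}F(s)\Delta s,
\end{equation*}
and both boundary terms vanish, which is exactly the claim. The one technical point you gloss over is that Theorem \ref{ft3} as stated in this paper assumes $f,g\in C_{rd}$, whereas here $F^{\Delta}=f$ is only assumed integrable; strictly speaking you need the integration-by-parts formula for absolutely continuous functions (which $F$ is, by Theorem \ref{8}), but this is a routine extension. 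It is also worth knowing that the source reference establishes the identity by the arguably more direct route of Fubini's theorem on time scales: interchanging the order of integration in $\int_a^{t_2}\int_a^{t_1}f(t_0)\Delta t_0\Delta t_1$ turns the inner $t_1$-integral into $\int_{\sigma(t_0)}^{t_2}\Delta t_1=t_2-\sigma(t_0)$, producing the kernel immediately without any choice of parts. Both approaches are legitimate; yours trades the measure-theoretic input of Fubini for the algebraic bookkeeping of recognizing the kernel as $u^{\sigma}$.
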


\begin{theorem}\cite{4'} \label{t3'}
If $x:\mathbb{T}\rightarrow\mathbb{C}$ is regulated and $X(t)=\int_0^tx(\tau)\Delta\tau$ for $t\in\mathbb{T}$, then
\begin{eqnarray*}
\mathcal{L}_{\mathbb{T}}\{X\}(z)=\frac{1}{z}\mathcal{L}_{\mathbb{T}}\{x\}(z)
\end{eqnarray*}
for all $z\in\mathcal{D}\{x\}\backslash\{0\}$ such that $\lim\limits_{t\rightarrow\infty}\{X(t)e_{\ominus z}(t)\}=0$.
\end{theorem}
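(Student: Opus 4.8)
The plan is to reduce the statement to the time-scale integration-by-parts formula of Theorem~\ref{ft3} after exploiting a basic multiplicative identity for the exponential $e_{\ominus z}(\cdot,0)$. Writing $g(t)=e_{\ominus z}(t,0)$, I would first record the elementary fact that $g$ solves $g^\Delta=(\ominus z)g$, so that from the jump relation $g^\sigma=(1+\mu(\ominus z))g$ together with $\ominus z=-z/(1+\mu z)$ one computes $1+\mu(\ominus z)=1/(1+\mu z)$ and hence the clean identity
\begin{eqnarray*}
g^\Delta(t)=-z\,g^\sigma(t),\qquad \mbox{i.e.}\qquad g^\sigma(t)=-\frac{1}{z}\,g^\Delta(t),
\end{eqnarray*}
valid for every regressive $z\neq0$. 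This is the device that converts the shifted kernel $g^\sigma$ appearing in Definition~\ref{d2'} into a $\Delta$-derivative, which is exactly the form needed to integrate by parts.

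Next, inserting this into the defining formula $\mathcal{L}_{\mathbb{T}}\{X\}(z)=\int_0^\infty X(t)g^\sigma(t)\,\Delta t$ gives $\mathcal{L}_{\mathbb{T}}\{X\}(z)=-\frac{1}{z}\int_0^\infty X(t)g^\Delta(t)\,\Delta t$. On each finite window $[0,T]\cap\mathbb{T}$ I would apply the companion of Theorem~\ref{ft3} obtained by interchanging the roles of $f$ and $g$, namely $\int_0^T X\,g^\Delta\,\Delta t=(Xg)(T)-(Xg)(0)-\int_0^T X^\Delta g^\sigma\,\Delta t$. Since $X^\Delta=x$ by hypothesis and $X(0)=\int_0^0 x(\tau)\,\Delta\tau=0$, the lower boundary term vanishes and the remaining integral on the right is precisely the truncation to $[0,T]$ of $\mathcal{L}_{\mathbb{T}}\{x\}(z)$.

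Finally I would let $T\to\infty$. The upper boundary term is $X(T)g(T)=X(T)e_{\ominus z}(T,0)$, which tends to $0$ by the standing hypothesis $\lim_{t\to\infty}X(t)e_{\ominus z}(t)=0$; the surviving integral converges to $-\mathcal{L}_{\mathbb{T}}\{x\}(z)$ for $z\in\mathcal{D}\{x\}$, so combining with the prefactor $-1/z$ yields the claimed $\mathcal{L}_{\mathbb{T}}\{X\}(z)=\frac{1}{z}\mathcal{L}_{\mathbb{T}}\{x\}(z)$. The main obstacle is analytic rather than algebraic: I must justify that all the improper $\Delta$-integrals converge on the region $\mathcal{D}\{x\}\backslash\{0\}$ and that the passage $T\to\infty$ may be carried out term by term in the finite-window identity. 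This is where the regulated hypothesis on $x$, the definition of the domain of convergence $\mathcal{D}\{x\}$, and the absolute-integrability estimate of Theorem~\ref{LTm3} enter, allowing one to control the tail of $\int_0^T X\,g^\Delta\,\Delta t$ and to guarantee that the boundary limit and the integral limit exist separately, so the identity survives in the limit.
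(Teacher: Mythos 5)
The paper offers no proof of this statement: it is quoted verbatim as a preliminary from the reference \cite{4'}, so there is nothing internal to compare against. Your argument is the standard proof of that cited result and is correct: the identity $e_{\ominus z}^{\sigma}(t,0)=-\tfrac{1}{z}\,e_{\ominus z}^{\Delta}(t,0)$ (from $\ominus z=-z/(1+\mu z)$), integration by parts on a finite window with $X(0)=0$ and $X^{\Delta}=x$, and the passage $T\to\infty$ using the hypotheses $z\in\mathcal{D}\{x\}$ and $\lim_{t\to\infty}X(t)e_{\ominus z}(t)=0$ give exactly the claimed formula; it is slightly cleaner to derive the finite-window identity first and let $T\to\infty$ afterwards, since that simultaneously establishes the convergence of the improper integral defining $\mathcal{L}_{\mathbb{T}}\{X\}(z)$ rather than assuming it at the outset.
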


\begin{theorem}\cite{2} \label{8}
A function $f:J\rightarrow\mathbb{R}^N$ is absolutely continuous on $J$ iff $f$ is $\Delta$-differentiable $\Delta-a.e.$ on $J^0$ and
\begin{eqnarray*}
f(t)=f(a)+\int_{[a,t)_\mathbb{T}}f^\Delta(s)\Delta s,\quad \forall t\in J.
\end{eqnarray*}
\end{theorem}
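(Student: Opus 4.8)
\emph{Proof plan.} The plan is to prove the two implications separately, treating the forward direction (absolute continuity $\Rightarrow$ the integral representation) as the substantive one and reducing it to the classical Lebesgue fundamental theorem of calculus on $J_{\mathbb{R}}=[a,b]$ via an extension of $f$ to the real interval. Throughout I would work with the $\Delta$-measure $\mu_\Delta$ on $\mathbb{T}$, recalling that $\mu_\Delta(\{t\})=\mu(t)=\sigma(t)-t$ at a right-scattered point, that $\mu_\Delta([c,d)_{\mathbb{T}})=d-c$ for $c,d\in\mathbb{T}$, and that a $\Delta$-null set corresponds under the extension to a Lebesgue-null subset of $[a,b]$.

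For the easy implication ($\Leftarrow$) I would assume $f$ is $\Delta$-differentiable $\Delta$-a.e. and that $f(t)=f(a)+\int_{[a,t)_{\mathbb{T}}}f^\Delta(s)\,\Delta s$. Since $f^\Delta$ is then $\Delta$-integrable, absolute continuity follows from the absolute continuity of the indefinite $\Delta$-integral: given $\varepsilon>0$ I choose $\delta>0$ so that every $\Delta$-measurable $E$ with $\mu_\Delta(E)<\delta$ satisfies $\int_E|f^\Delta|\,\Delta s<\varepsilon$. For any finite family of pairwise disjoint subintervals $(a_k,b_k)$ of $J$ with $\sum_k(b_k-a_k)<\delta$, Theorem \ref{LTm3} gives $\sum_k|f(b_k)-f(a_k)|=\sum_k\big|\int_{[a_k,b_k)_{\mathbb{T}}}f^\Delta\,\Delta s\big|\le\int_{\bigcup_k[a_k,b_k)_{\mathbb{T}}}|f^\Delta|\,\Delta s<\varepsilon$, which is exactly the definition of absolute continuity.

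The substantive implication ($\Rightarrow$) proceeds by extension. Assuming $f$ absolutely continuous on $J$, I define $\tilde f:J_{\mathbb{R}}\to\mathbb{R}^N$ by $\tilde f=f$ on $\mathbb{T}$ and by affine interpolation on the closure of each gap, i.e. on $[t,\sigma(t)]$ for right-scattered $t$ I let $\tilde f$ be the segment joining $f(t)$ and $f(\sigma(t))$ (this is in the spirit of the extension in Proposition \ref{2}, but using affine rather than constant interpolation so as to preserve continuity across the gaps). The first task is to verify that $\tilde f$ is absolutely continuous on $[a,b]$ in the classical sense; this holds because the oscillation of $\tilde f$ over any interval is controlled by that of $f$ over the corresponding $\mathbb{T}$-points, the affine pieces adding no extra variation. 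The classical fundamental theorem then yields that $\tilde f$ is differentiable a.e. on $[a,b]$ and $\tilde f(s)=\tilde f(a)+\int_a^s\tilde f'(r)\,dr$. Translating back, at a right-dense point $t$ where $\tilde f'(t)$ exists one checks $f^\Delta(t)=\tilde f'(t)$, while at a right-scattered $t$ the derivative $f^\Delta(t)=(f(\sigma(t))-f(t))/\mu(t)$ always exists and equals the constant value of $\tilde f'$ on $(t,\sigma(t))$; splitting $\int_a^t\tilde f'(r)\,dr$ into its contributions over right-dense points and over the gaps, the gap $(t_k,\sigma(t_k))$ contributes $\tilde f(\sigma(t_k))-\tilde f(t_k)=f^\Delta(t_k)\mu(t_k)=f^\Delta(t_k)\mu_\Delta(\{t_k\})$, so $\int_a^t\tilde f'(r)\,dr$ reassembles precisely into $\int_{[a,t)_{\mathbb{T}}}f^\Delta(s)\,\Delta s$, and since $\tilde f=f$ on $\mathbb{T}$ the claimed representation follows.

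The main obstacle I anticipate is the measure-theoretic bookkeeping in this last step: one must confirm that the classical derivative $\tilde f'$ agrees with $f^\Delta$ at $\Delta$-a.e. point (in particular that the Lebesgue-null exceptional set meets $\mathbb{T}$ in a $\Delta$-null set), and that the decomposition of $\int_a^t\tilde f'\,dr$ matches the $\Delta$-integral term by term across the countably many gaps. Making the correspondence between Lebesgue measure on $[a,b]$ and the $\Delta$-measure on $\mathbb{T}$ fully rigorous under the extension is where the genuine work lies; the two endpoint implications are then essentially immediate from it.
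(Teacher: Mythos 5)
Your plan is sound, but note that the paper itself offers no proof of this statement: Theorem \ref{8} is quoted verbatim from the cited reference \cite{2} as a preliminary, so there is nothing in the text to compare against. Your argument --- the easy direction via absolute continuity of the indefinite $\Delta$-integral, and the substantive direction via affine extension of $f$ to $J_{\mathbb{R}}$, the classical Lebesgue fundamental theorem for $\tilde f$, and the term-by-term reassembly of $\int_a^t\tilde f'\,dr$ into $\int_{[a,t)_{\mathbb{T}}}f^\Delta\,\Delta s$ using $\tilde f(\sigma(t_k))-\tilde f(t_k)=f^\Delta(t_k)\,\mu_{\Delta}(\{t_k\})$ on each gap --- is exactly the standard route taken in the sources the paper cites (Cabada--Vivero \cite{1} and Zhou--Li \cite{2}), and the measure-theoretic points you flag (right-scattered points carry positive $\Delta$-measure and are countable, so the Lebesgue-null exceptional set of $\tilde f'$ meets $\mathbb{T}$ in a $\Delta$-null set once those points are removed; $f^\Delta$ always exists at right-scattered points) are precisely the details that make it work. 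The outline is correct as it stands.
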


\begin{theorem}\cite{1} \label{9}
A function $f:\mathbb{T}\rightarrow\mathbb{R}$ is absolutely continuous on $\mathbb{T}$ iff the following conditions are satisfied:
\begin{itemize}
  \item [$(i)$]
  $f$ is $\Delta-$differentiable $\Delta-a.e.$ on $J^0$ and $f^\Delta\in L^1(\mathbb{T})$.
  \item [$(ii)$]
  The equality
  \begin{eqnarray*}
  f(t)=f(a)+\int_{[a,t)_\mathbb{T}}f^\Delta(s)\Delta s
  \end{eqnarray*}
holds for every  $t\in\mathbb{T}$.
\end{itemize}
\end{theorem}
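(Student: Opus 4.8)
The plan is to deduce this statement from its closed-bounded-interval version, Theorem \ref{8}, together with standard facts about the Lebesgue $\Delta$-integral, treating the two implications separately. Throughout I read ``absolutely continuous on $\mathbb{T}$'' as absolute continuity on the closed bounded interval $J=[a,b]\cap\mathbb{T}$, consistent with condition $(i)$ being phrased on $J^0$, so that all integrals live over a bounded set.

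For necessity, suppose $f$ is absolutely continuous on $\mathbb{T}$. Restricting the defining $\varepsilon$--$\delta$ property of absolute continuity to $J$ shows that $f$ is absolutely continuous on $J$, so Theorem \ref{8} applies and gives at once that $f$ is $\Delta$-differentiable $\Delta$-a.e.\ on $J^0$ and that the representation $f(t)=f(a)+\int_{[a,t)_{\mathbb{T}}}f^\Delta(s)\,\Delta s$ holds for every $t\in J$; this is exactly $(ii)$ and the differentiability half of $(i)$. It remains to establish $f^\Delta\in L^1(\mathbb{T})$. Here I would use that an absolutely continuous function on a bounded interval is of bounded variation: partitioning $J$ and invoking the $\varepsilon$--$\delta$ property to bound the sum of increments yields $\mathrm{Var}(f;J)<\infty$, and the standard identification of the $\Delta$-integral with a Lebesgue integral then gives $\int_{\mathbb{T}}|f^\Delta(s)|\,\Delta s\le\mathrm{Var}(f;J)<\infty$, i.e.\ $f^\Delta\in L^1(\mathbb{T})$.

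For sufficiency, assume $(i)$ and $(ii)$. Condition $(ii)$ exhibits $f$ as the indefinite $\Delta$-integral of the $L^1(\mathbb{T})$ function $f^\Delta$, so the argument reduces to the absolute continuity of the Lebesgue $\Delta$-integral: for every $\varepsilon>0$ there is $\delta>0$ such that $\int_E|f^\Delta(s)|\,\Delta s<\varepsilon$ whenever $E\subseteq\mathbb{T}$ has $\Delta$-measure less than $\delta$. Given a finite pairwise disjoint family $\{[a_k,b_k)_{\mathbb{T}}\}$ with $\sum_k(b_k-a_k)<\delta$, Theorem \ref{LTm3} gives $|f(b_k)-f(a_k)|=\big|\int_{[a_k,b_k)_{\mathbb{T}}}f^\Delta(s)\,\Delta s\big|\le\int_{[a_k,b_k)_{\mathbb{T}}}|f^\Delta(s)|\,\Delta s$, and summing over $k$ while applying the above estimate to $E=\bigcup_k[a_k,b_k)_{\mathbb{T}}$ yields $\sum_k|f(b_k)-f(a_k)|<\varepsilon$. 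This is precisely absolute continuity of $f$ on $\mathbb{T}$, completing the proof.

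I expect the genuinely nontrivial step to be the integrability claim $f^\Delta\in L^1(\mathbb{T})$ in the necessity direction, since it is the point where one must pass from the local information supplied by Theorem \ref{8} to a global bound; it relies on the correspondence between $\Delta$-integration and ordinary Lebesgue integration with respect to the $\Delta$-measure in order to convert the bounded-variation estimate into an $L^1$ bound on the derivative. The remaining steps are essentially bookkeeping on top of Theorem \ref{8} and the absolute continuity of the integral.
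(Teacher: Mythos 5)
The paper does not prove this statement: Theorem \ref{9} is imported verbatim from the reference \cite{1} (Cabada--Vivero), so there is no in-paper proof to compare against. Judged on its own terms, your argument is correct, but it is worth noting that Theorem \ref{9} is essentially a restatement of Theorem \ref{8} with the integrability of $f^\Delta$ made explicit, so most of the work is bookkeeping. In particular, your sufficiency direction could be obtained in one line from the ``if'' half of the equivalence in Theorem \ref{8}: conditions $(i)$ and $(ii)$ are exactly the right-hand side of that equivalence, so absolute continuity follows without re-running the absolute continuity of the Lebesgue $\Delta$-integral (though your self-contained argument via $\mu_\Delta([a_k,b_k)_{\mathbb{T}})=b_k-a_k$ and Theorem \ref{LTm3} is valid). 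Likewise, in the necessity direction the claim $f^\Delta\in L^1_\Delta$ is already implicit in the representation of Theorem \ref{8}: for the Lebesgue $\Delta$-integral $\int_{[a,t)_{\mathbb{T}}}f^\Delta\,\Delta s$ to exist for $t=b$ one needs $f^\Delta\in L^1_\Delta(J^0)$, so the bounded-variation detour is unnecessary. If you do keep that detour, be careful with the step ``partitioning $J$ and invoking the $\varepsilon$--$\delta$ property'': a bounded time scale may contain gaps $(t,\sigma(t))$ of length exceeding any prescribed $\delta$, and the increments $\vert f(\sigma(t))-f(t)\vert$ across such gaps are not controlled by the absolute-continuity modulus; they must be handled separately (there are only finitely many such gaps, each contributing a fixed finite jump), after which the remaining increments can be grouped into systems of total length below $\delta$.
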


\begin{theorem}\cite{5} \label{10}
A function $q:J_{\mathbb{R}}\rightarrow\mathbb{R}^m$ is absolutely continuous iff there exist a constant $c\in\mathbb{R}^m$ and a function $\varphi\in L^1$ such that
\begin{eqnarray*}
q(t)=c+(I_{a^+}^1\varphi)(t),\quad t\in J_{\mathbb{R}}.
\end{eqnarray*}
In this case, we have $q(a)=c$ and $q'(t)=\varphi(t)$, $t\in J_{\mathbb{R}}$ a.e..
\end{theorem}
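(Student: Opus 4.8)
The plan is to recognize that Theorem \ref{10} is the classical absolute-continuity characterization on the real line, since the Riemann$-$Liouville integral of order $1$ degenerates to the ordinary indefinite integral: because $\Gamma(1)=1$ and $(t-s)^{0}=1$, one has $(I_{a^+}^1\varphi)(t)=\int_a^t\varphi(s)\,ds$. Thus the asserted representation reads $q(t)=c+\int_a^t\varphi(s)\,ds$, and the theorem reduces to the statement that $q$ is absolutely continuous on $J_{\mathbb{R}}=[a,b]$ if and only if it is the indefinite Lebesgue integral of an $L^1$ function plus a constant. I would prove the two implications separately and then read off the ``in this case'' conclusions by direct evaluation and by differentiation of the integral.

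For necessity, assume $q$ is absolutely continuous on $J_{\mathbb{R}}$. I would invoke the Lebesgue differentiation theory for absolutely continuous functions: such a $q$ is differentiable a.e., its derivative $q'$ is integrable, and $q$ satisfies the fundamental theorem of calculus $q(t)=q(a)+\int_a^t q'(s)\,ds$. Setting $c:=q(a)\in\mathbb{R}^m$ and $\varphi:=q'\in L^1$ then gives exactly $q(t)=c+(I_{a^+}^1\varphi)(t)$.

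For sufficiency, suppose $q(t)=c+\int_a^t\varphi(s)\,ds$ with $\varphi\in L^1$. I would show $q$ is absolutely continuous using the absolute continuity of the Lebesgue integral: given $\varepsilon>0$, choose $\delta>0$ so that $\int_E|\varphi|<\varepsilon$ whenever the measurable set $E$ has $|E|<\delta$; then for any finite family of disjoint subintervals $(a_i,b_i)$ with $\sum_i(b_i-a_i)<\delta$ one obtains $\sum_i|q(b_i)-q(a_i)|=\sum_i\big|\int_{a_i}^{b_i}\varphi\big|\le\int_{\bigcup_i(a_i,b_i)}|\varphi|<\varepsilon$. Finally, evaluating at $t=a$ yields $q(a)=c$, and the Lebesgue differentiation theorem gives $\frac{d}{dt}\int_a^t\varphi(s)\,ds=\varphi(t)$ for a.e. $t$, hence $q'=\varphi$ a.e.

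The main obstacle is entirely contained in the necessity direction: the nontrivial input is the Lebesgue structure theorem that an absolutely continuous function is differentiable almost everywhere with an integrable derivative and obeys the fundamental theorem of calculus. Everything else — the reduction of the order-$1$ fractional integral to the ordinary indefinite integral, the sufficiency estimate, and the identification $q'=\varphi$ — is routine once the Lebesgue differentiation theorem is available.
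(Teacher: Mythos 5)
The paper does not prove this statement at all: Theorem \ref{10} is quoted verbatim from reference \cite{5} as a known classical fact and is used as a black box, so there is no in-paper proof to compare against. Your argument is the standard and correct one — reduce $(I_{a^+}^1\varphi)(t)$ to $\int_a^t\varphi(s)\,ds$, get necessity from the a.e.\ differentiability and fundamental theorem of calculus for absolutely continuous functions (applied componentwise for $\mathbb{R}^m$-valued $q$), and get sufficiency from the absolute continuity of the Lebesgue integral — and it correctly identifies the Lebesgue differentiation theorem as the only nontrivial input.
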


\begin{theorem}\cite{2} (Integral representation)\label{11}
Let $\alpha\in(0,1)$ and $q\in L^1$. Then, $q$ has a left$-$sided Riemann$-$Liouville derivative $D_{a^+}^\alpha q$ of order $\alpha$ iff there exist a constant $c\in\mathbb{R}^m$ and a function $\varphi\in L^1$ such that
\begin{eqnarray*}
q(t)=\frac{1}{\Gamma(\alpha)}\frac{c}{(t-a)^{1-\alpha}}+(I_{a^+}^\alpha\varphi)(t),\quad t\in J_{\mathbb{R}}\quad a.e..
\end{eqnarray*}
In this case, we have $I_{a^+}^{1-\alpha}q(a)=c$ and $(D_{a^+}^\alpha q)(t)=\varphi(t)$, $t\in J_{\mathbb{R}}$ a.e..
\end{theorem}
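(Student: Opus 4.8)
The plan is to establish both implications by reducing everything to the absolute-continuity characterization of Theorem \ref{10}, combined with two standard facts about Riemann$-$Liouville integrals on $J_{\mathbb{R}}$: the semigroup law $I_{a^+}^\alpha I_{a^+}^\beta=I_{a^+}^{\alpha+\beta}$ (valid in the $L^1$ setting by Fubini), and the power identity $I_{a^+}^\beta(t-a)^\gamma=\frac{\Gamma(\gamma+1)}{\Gamma(\gamma+\beta+1)}(t-a)^{\gamma+\beta}$, which in particular gives $I_{a^+}^{1-\alpha}\big[\frac{(t-a)^{\alpha-1}}{\Gamma(\alpha)}\big]=1$. At the outset I would make explicit that, by the definition $D_{a^+}^\alpha q=\frac{d}{dt}I_{a^+}^{1-\alpha}q$, the existence of the Riemann$-$Liouville derivative of $q$ is exactly the requirement that the function $Q:=I_{a^+}^{1-\alpha}q$ be absolutely continuous on $J_{\mathbb{R}}$. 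Both directions are then phrased in terms of $Q$.

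For the sufficiency direction, suppose $q(t)=\frac{c}{\Gamma(\alpha)}(t-a)^{\alpha-1}+(I_{a^+}^\alpha\varphi)(t)$. Applying $I_{a^+}^{1-\alpha}$ to both sides, the first term contributes $c$ by the power identity, while the semigroup law turns the second into $I_{a^+}^{1-\alpha}I_{a^+}^\alpha\varphi=I_{a^+}^1\varphi$, i.e. $(I_{a^+}^1\varphi)(t)=\int_a^t\varphi(s)\,ds$. Hence $Q(t)=c+(I_{a^+}^1\varphi)(t)$, and Theorem \ref{10} (read with constant $c$ and density $\varphi$) shows that $Q$ is absolutely continuous with $Q(a)=c$ and $Q'=\varphi$ a.e. Thus $D_{a^+}^\alpha q$ exists, $I_{a^+}^{1-\alpha}q(a)=c$, and $D_{a^+}^\alpha q=\varphi$.

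For the necessity direction, assume $D_{a^+}^\alpha q$ exists and set $\varphi:=D_{a^+}^\alpha q\in L^1$ and $c:=Q(a)=I_{a^+}^{1-\alpha}q(a)$. Since $Q$ is absolutely continuous, Theorem \ref{10} yields $Q(t)=c+(I_{a^+}^1\varphi)(t)$. I would apply $I_{a^+}^\alpha$ to this equality: the left side becomes $I_{a^+}^\alpha I_{a^+}^{1-\alpha}q=I_{a^+}^1 q$ by the semigroup law, and the right side becomes $c\,I_{a^+}^\alpha 1+I_{a^+}^{1+\alpha}\varphi=\frac{c}{\Gamma(\alpha+1)}(t-a)^\alpha+I_{a^+}^1 I_{a^+}^\alpha\varphi$. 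Differentiating this identity between two absolutely continuous functions (the derivative of the power term is $\frac{c}{\Gamma(\alpha)}(t-a)^{\alpha-1}$, and $\frac{d}{dt}I_{a^+}^1 I_{a^+}^\alpha\varphi=I_{a^+}^\alpha\varphi$) gives $q(t)=\frac{c}{\Gamma(\alpha)}(t-a)^{\alpha-1}+(I_{a^+}^\alpha\varphi)(t)$ for a.e. $t$, which is the desired representation with the asserted $c$ and $\varphi$.

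The main obstacle lies in the necessity direction, where two operations on a merely $L^1$ object must be justified rigorously: first, applying $I_{a^+}^\alpha$ to $Q=I_{a^+}^{1-\alpha}q$ and invoking $I_{a^+}^\alpha I_{a^+}^{1-\alpha}q=I_{a^+}^1 q$, which needs the semigroup identity on $L^1$ (an interchange of order of integration); and second, differentiating the integrated identity, legitimate because $I_{a^+}^\alpha$ maps $L^1$ into $L^1$, so both $I_{a^+}^1 q$ and $I_{a^+}^1 I_{a^+}^\alpha\varphi$ are absolutely continuous and differentiable a.e. by the Lebesgue differentiation theorem. The boundary identification $c=I_{a^+}^{1-\alpha}q(a)$ is meaningful precisely because the existence of $D_{a^+}^\alpha q$ forces $Q$ to be continuous up to the endpoint $t=a$.
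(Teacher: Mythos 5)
Your argument is correct, and it is essentially the canonical proof of this integral-representation theorem: reduce existence of $D_{a^+}^\alpha q$ to absolute continuity of $Q=I_{a^+}^{1-\alpha}q$, invoke the characterization of Theorem \ref{10}, and pass between the two representations using the semigroup law and the power identity, with one application of $I_{a^+}^\alpha$ followed by a.e.\ differentiation in the necessity direction. Note that the paper itself states Theorem \ref{11} as a quoted preliminary from \cite{2} and gives no proof of it; however, your argument coincides step for step with the proof the paper does give for its time-scale analogue (Theorem \ref{23}), which likewise applies the fractional integral of complementary order, cites the semigroup property (there Proposition \ref{6}), and differentiates the resulting identity, so there is nothing to flag.
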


\begin{lemma}\cite{7} \label{13}
Let $f\in L_\Delta^1(J^0)$. Then, the following
\begin{eqnarray*}
\int_{J^0}(f\cdot\varphi^\Delta)(s)\Delta s=0,\quad for\,\,every\,\,\varphi\in C_{0,rd}^1(J^k)
\end{eqnarray*}
holds iff there exists a constant $c\in\mathbb{R}$ such that
\begin{eqnarray*}
f\equiv c \quad \Delta-a.e. \,\,on\,\, J^0.
\end{eqnarray*}
\end{lemma}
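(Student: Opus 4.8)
The statement is the time-scale version of the du~Bois-Reymond lemma from the classical calculus of variations, and the plan is to establish the two implications separately, the converse being the substantive one.

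\emph{Sufficiency.} Suppose $f\equiv c$ $\Delta$-a.e. on $J^0$. For every $\varphi\in C_{0,rd}^1(J^k)$ the additivity of the $\Delta$-integral and the fundamental theorem of calculus on time scales give $\int_{J^0}(f\cdot\varphi^\Delta)(s)\,\Delta s=c\int_{J^0}\varphi^\Delta(s)\,\Delta s=c\big(\varphi(b)-\varphi(a)\big)$, and this vanishes because the elements of $C_{0,rd}^1(J^k)$ vanish at the endpoints $a$ and $b$. Nothing more is needed here.

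\emph{Necessity.} The idea is to select the correct constant and then manufacture admissible test functions that realise an arbitrary rd-continuous ``direction''. I would set $c:=\frac{1}{b-a}\int_{J^0}f(\tau)\,\Delta\tau$, which is finite since $f\in L_\Delta^1(J^0)$. Given any $\psi\in C_{rd}(J)$, write $c_\psi:=\frac{1}{b-a}\int_{J^0}\psi(\tau)\,\Delta\tau$ and set $\varphi(t):=\int_{[a,t)}\big(\psi(\tau)-c_\psi\big)\,\Delta\tau$. Since $\psi-c_\psi$ is rd-continuous, the fundamental theorem of calculus yields $\varphi^\Delta=\psi-c_\psi$ on $J^k$, so $\varphi\in C_{rd}^1$; moreover $\varphi(a)=0$ and, by the very choice of $c_\psi$, $\varphi(b)=\int_{J^0}\big(\psi-c_\psi\big)\,\Delta\tau=0$, whence $\varphi\in C_{0,rd}^1(J^k)$ is admissible. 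Substituting this $\varphi$ into the hypothesis and using $\int_{J^0}f\,\Delta\tau=c(b-a)$ together with $c_\psi(b-a)=\int_{J^0}\psi\,\Delta\tau$, the mixed term collapses and one is left with $\int_{J^0}\big(f(s)-c\big)\psi(s)\,\Delta s=0$ for every $\psi\in C_{rd}(J)$.

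It then remains to deduce $f\equiv c$ $\Delta$-a.e. from this orthogonality against all rd-continuous functions, and this is where I expect the only genuine difficulty to lie. Because $f-c\in L_\Delta^1(J^0)$, the identity says that the finite signed $\Delta$-measure with density $f-c$ annihilates $C_{rd}(J)$; since the rd-continuous functions are dense in $L_\Delta^1(J^0)$ and in particular separate the atomic part of the $\Delta$-measure carried by the right-scattered points, this forces $f-c=0$ $\Delta$-a.e., i.e. $f\equiv c$. The delicate step is precisely this density/separation argument in the $\Delta$-measure setting, where the atoms at the right-scattered points must be handled alongside the absolutely continuous part; concretely I would either invoke the basic fundamental lemma for the $\Delta$-integral or approximate $\operatorname{sgn}(f-c)$ in $L_\Delta^1(J^0)$ by rd-continuous functions to close the argument.
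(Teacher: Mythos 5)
This lemma is quoted from \cite{7} and the paper supplies no proof of its own, so there is nothing internal to compare against; judged on its merits, your argument is essentially correct and follows the classical du~Bois--Reymond route. Two remarks. First, by Definition \ref{14} the class $C_{0,rd}^1(J^k)$ only requires $\varphi(a)=\varphi(b)$, not that $\varphi$ vanish at the endpoints; your sufficiency computation $c\big(\varphi(b)-\varphi(a)\big)=0$ still goes through, but the stated reason should be the equality of the boundary values rather than their vanishing. Second, the necessity half is sound: the test function $\varphi(t)=\int_{[a,t)}\big(\psi(\tau)-c_\psi\big)\Delta\tau$ is admissible by the fundamental theorem of calculus on time scales (the $\Delta$-antiderivative of an rd-continuous function lies in $C_{rd}^1$ and $\varphi(a)=\varphi(b)=0$), and the algebra reducing the hypothesis to $\int_{J^0}\big(f-c\big)\psi\,\Delta s=0$ for all $\psi\in C_{rd}(J)$ is right. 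The only genuinely open point is the final step, which you correctly identify but leave as a sketch: that a function $g\in L_\Delta^1(J^0)$ orthogonal to all of $C_{rd}(J)$ vanishes $\Delta$-a.e. This does close as you indicate --- at each right-scattered point $t_0$ the indicator $\chi_{\{t_0\}}$ is rd-continuous, so testing against it kills the atomic part $g(t_0)\mu(t_0)$, and on the right-dense part the $\Delta$-measure is Lebesgue measure, where the classical fundamental lemma (or an $L_\Delta^1$-approximation of $\operatorname{sgn} g$ by rd-continuous functions bounded by $1$, followed by dominated convergence) applies --- but in a submitted proof this step should be written out or pinned to the density of $C_{rd}$ in $L_\Delta^1$ established in \cite{7}, since it is the real content of the lemma.
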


\begin{definition}\cite{7} \label{14}
Let $p\in \bar{\mathbb{R}}$ be such that $p\geq 1$ and $u:J\rightarrow\bar{\mathbb{R}}$. Say that $u$ belongs to $W_{\Delta}^{1,p}(J)$ iff $u\in L_{\Delta}^p(J^0)$ and there exists $g:J^k\rightarrow\bar{\mathbb{R}}$ such that $g\in L_{\Delta}^p(J^0)$ and
\begin{eqnarray*}
\int_{J^0}(u\cdot\varphi^\Delta)(s)\Delta s=-\int_{J^0}(g\cdot\varphi^\sigma)(s)\Delta s,\quad \forall\varphi\in C_{0,rd}^1(J^k),
\end{eqnarray*}
with
\begin{eqnarray*}
C_{0,rd}^1(J^k):=\bigg\{f:J\rightarrow\mathbb{R}:f\in C_{rd}^1(J^k),\,f(a)=f(b)\bigg\},
\end{eqnarray*}
where $C_{rd}^1(J^k)$ is the set of all continuous functions on $J$ such that they are $\Delta-$differential on $J^k$ and their $\Delta-$derivatives are $rd-$continuous on $J^k$.
\end{definition}

\begin{theorem}\cite{7} \label{15}
Let $p\in\bar{\mathbb{R}}$ be such that $p\geq 1$. Then, the set $L_\Delta^p(J^0)$ is a Banach space together with the norm defined for every $f\in L_\Delta^p(J^0)$ as
\begin{equation*}
\|f\|_{L_\Delta^p}:=
\left\{
\begin{aligned}
&\bigg[\int_{J^0}\vert f\vert^p(s)\Delta s\bigg]^{\frac{1}{p}},&\quad if\,\,p\in \mathbb{R},\\
&\inf\{C\in\mathbb{R}:\vert f\vert\leq C \,\Delta-a.e. \,\, \mathrm{on} \,\,J^0\},&\quad if\,\,p=+\infty.
\end{aligned}
\right.
\end{equation*}
Moreover, $L_\Delta^2(J^0)$ is a Hilbert space together with the inner product given for every $(f,g)\in L_\Delta^2(J^0)\times L_\Delta^2(J^0)$ by
\begin{eqnarray*}
(f,g)_{L_\Delta^2}:=\int_{J^0}f(s)\cdot g(s)\Delta s.
\end{eqnarray*}
\end{theorem}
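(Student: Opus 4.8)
The plan is to exploit the fact that the $\Delta$-integral on $J^0$ coincides with Lebesgue integration against the $\Delta$-measure $\mu_\Delta$ on $J^0$, the Borel measure that assigns mass $\sigma(t)-t=\mu(t)$ to each right-scattered point $t$ and agrees with ordinary Lebesgue measure on the dense part of the time scale. Under this identification $L_\Delta^p(J^0)$ is literally the Lebesgue space $L^p(J^0,\mu_\Delta)$, so the statement is an instance of the classical Riesz--Fischer theorem together with the standard Hilbert structure of $L^2$. I would therefore first recall this measure-theoretic realization (on which the very definition of $L_\Delta^p$ rests in \cite{7}) and then transcribe the classical arguments into this concrete setting, paying attention only to the points where the atomic part of $\mu_\Delta$ could conceivably cause trouble.

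First I would verify that $\|\cdot\|_{L_\Delta^p}$ is a genuine norm on the equivalence classes of functions agreeing $\Delta$-a.e. Homogeneity, and the fact that $\|f\|_{L_\Delta^p}=0$ forces $f=0$ $\Delta$-a.e., are immediate from monotonicity of the integral; the only nontrivial axiom is the triangle inequality. For finite $p$ this is Minkowski's inequality, which I would derive from H\"older's inequality, and H\"older's inequality in turn from the pointwise Young inequality $xy\le x^p/p+y^q/q$ (for $x,y\ge0$ and $1/p+1/q=1$) integrated over $J^0$ against $\Delta s$. Here Theorem \ref{LTm3} guarantees that $|f|$, and hence $|f|^p$, is $\Delta$-integrable whenever $f$ is, so all the integrals involved are well defined. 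For $p=+\infty$ the triangle inequality is elementary from the definition of the essential supremum.

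The core of the argument is completeness. Given a Cauchy sequence $(f_n)$ in $L_\Delta^p(J^0)$ with $1\le p<\infty$, I would pass to a subsequence $(f_{n_k})$ with $\|f_{n_{k+1}}-f_{n_k}\|_{L_\Delta^p}\le 2^{-k}$, set
\begin{eqnarray*}
g:=|f_{n_1}|+\sum_{k\ge1}|f_{n_{k+1}}-f_{n_k}|,
\end{eqnarray*}
and use the monotone convergence theorem for $\mu_\Delta$ together with Minkowski's inequality to show $g\in L_\Delta^p(J^0)$, whence $g<\infty$ $\Delta$-a.e. Consequently the telescoping series whose partial sums are the $f_{n_k}$ converges absolutely $\Delta$-a.e. to a measurable limit $f$, and the bound $|f|\le g$ gives $f\in L_\Delta^p(J^0)$. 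Dominated convergence for $\mu_\Delta$, with dominating function $(2g)^p$, then yields $\|f_{n_k}-f\|_{L_\Delta^p}\to0$, and the standard observation that a Cauchy sequence with a convergent subsequence itself converges upgrades this to convergence of the full sequence. The case $p=+\infty$ is handled separately: a sequence that is Cauchy in the essential-sup norm is uniformly Cauchy off a countable union of $\Delta$-null sets, hence converges uniformly there to a bounded limit, which is the required $L_\Delta^\infty$ limit.

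Finally, for the Hilbert space claim with $p=2$, I would check that $(f,g)_{L_\Delta^2}=\int_{J^0}f(s)g(s)\Delta s$ is well defined (the integrand is $\Delta$-integrable by the Cauchy--Schwarz case of H\"older's inequality), bilinear and symmetric by linearity of the $\Delta$-integral, and positive definite since $(f,f)_{L_\Delta^2}=\|f\|_{L_\Delta^2}^2$ vanishes only for $f=0$ $\Delta$-a.e.; completeness was already established above. The main obstacle throughout is not any single inequality but the need to know that the standard convergence theorems of measure theory—monotone convergence, Fatou, and dominated convergence—hold verbatim for the $\Delta$-measure $\mu_\Delta$, including its atoms at right-scattered points. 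Once this measure-theoretic foundation is in place, which is precisely the content underlying the definition of $L_\Delta^p(J^0)$ in \cite{7}, the proof is a faithful transcription of the Euclidean Riesz--Fischer theorem.
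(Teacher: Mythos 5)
Your proposal is correct, but note that the paper itself offers no proof of this statement: Theorem \ref{15} is quoted verbatim from the reference \cite{7} as a known preliminary, so there is no in-paper argument to compare against. Your route --- realizing $L_\Delta^p(J^0)$ as the Lebesgue space of the $\Delta$-measure $\mu_\Delta$ (atoms of mass $\mu(t)=\sigma(t)-t$ at right-scattered points, Lebesgue measure on the dense part) and then running the classical Riesz--Fischer argument (Young $\Rightarrow$ H\"older $\Rightarrow$ Minkowski for the norm axioms; the rapidly Cauchy subsequence with the telescoping series, monotone and dominated convergence for completeness; the separate essential-supremum argument for $p=+\infty$; Cauchy--Schwarz and $(f,f)_{L_\Delta^2}=\|f\|_{L_\Delta^2}^2$ for the Hilbert structure) --- is exactly the standard proof and is the one carried out in the cited source. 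The point you flag as the only potential obstacle, namely the validity of the convergence theorems for $\mu_\Delta$, is not actually an obstacle: $\mu_\Delta$ is a genuine $\sigma$-finite Borel measure on $J^0$, and monotone convergence, Fatou and dominated convergence hold for arbitrary measures, atoms included. Your argument is complete as sketched.
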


\begin{theorem}\cite{4} \label{16}
Fractional integration operators are bounded in $L^p(J_{\mathbb{R}})$, i.e., the following estimate
\begin{eqnarray*}
\|I_{a^+}^\alpha\varphi\|_{L^p(a,b)}\leq\frac{(b-a)^{Re\alpha}}{Re\alpha\vert\Gamma(\alpha)\vert}\|\varphi\|_{L^p(J_{\mathbb{R}})},\quad Re\alpha>0
\end{eqnarray*}
holds.
\end{theorem}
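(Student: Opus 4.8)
The plan is to reduce the bound to the classical generalized Minkowski integral inequality by exploiting the convolution structure of $I_{a^+}^\alpha$. First I would perform the change of variable $u=t-s$ in the defining integral
\[
I_{a^+}^\alpha\varphi(t)=\frac{1}{\Gamma(\alpha)}\int_a^t(t-s)^{\alpha-1}\varphi(s)\,ds,
\]
which yields
\[
I_{a^+}^\alpha\varphi(t)=\frac{1}{\Gamma(\alpha)}\int_0^{t-a}u^{\alpha-1}\varphi(t-u)\,du.
\]
Extending $\varphi$ by zero outside $J_{\mathbb{R}}=[a,b]$ and writing $\widetilde{\varphi}$ for this extension, I can replace the upper limit $t-a$ by $b-a$, since $\widetilde{\varphi}(t-u)=0$ whenever $u>t-a$. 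This exhibits $I_{a^+}^\alpha\varphi$ on $(a,b)$ as the restriction of the convolution of the fixed kernel $u^{\alpha-1}$ on $(0,b-a)$ against the translates of $\widetilde{\varphi}$.

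Next I would apply the generalized Minkowski inequality to move the $L^p$-norm inside the $u$-integral,
\[
\|I_{a^+}^\alpha\varphi\|_{L^p(a,b)}\leq\frac{1}{|\Gamma(\alpha)|}\int_0^{b-a}|u^{\alpha-1}|\,\|\widetilde{\varphi}(\cdot-u)\|_{L^p(a,b)}\,du,
\]
after which two elementary observations finish the estimate. For $u>0$ one has $|u^{\alpha-1}|=u^{\mathrm{Re}\,\alpha-1}$, and the translation invariance of the Lebesgue norm gives $\|\widetilde{\varphi}(\cdot-u)\|_{L^p(a,b)}\leq\|\widetilde{\varphi}\|_{L^p(\mathbb{R})}=\|\varphi\|_{L^p(a,b)}$. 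Pulling $\|\varphi\|_{L^p(J_{\mathbb{R}})}$ out of the integral and computing the remaining scalar quantity
\[
\int_0^{b-a}u^{\mathrm{Re}\,\alpha-1}\,du=\frac{(b-a)^{\mathrm{Re}\,\alpha}}{\mathrm{Re}\,\alpha}
\]
delivers exactly the asserted inequality.

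The only genuinely delicate point is the joint justification of the convolution rewriting and the application of Minkowski's inequality for possibly complex $\alpha$: one must verify that the kernel $u^{\mathrm{Re}\,\alpha-1}$ is integrable near $u=0$, which is guaranteed precisely by the hypothesis $\mathrm{Re}\,\alpha>0$ (this is also exactly where the convergence of the displayed scalar integral comes from), and that the zero-extension does not change the values of $I_{a^+}^\alpha\varphi$ on $(a,b)$. The endpoint case $p=+\infty$ requires no separate treatment, as both Minkowski's inequality and the translation bound remain valid there. An essentially equivalent alternative is to invoke Young's convolution inequality with the kernel $u^{\alpha-1}\chi_{(0,b-a)}$, whose $L^1$-norm equals $(b-a)^{\mathrm{Re}\,\alpha}/(\mathrm{Re}\,\alpha)$ and which reproduces the same constant; I nevertheless prefer the Minkowski-based argument, since it is more self-contained and sidesteps any appeal to convolution theory on the whole line.
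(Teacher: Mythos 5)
This statement is quoted in the paper as a known result from the reference [Samko--Kilbas--Marichev], and the paper gives no proof of its own to compare against. Your Minkowski-based convolution argument is correct and is essentially the classical proof found in that cited source: the change of variables, the zero-extension, the identity $\vert u^{\alpha-1}\vert=u^{\mathrm{Re}\,\alpha-1}$, the translation bound, and the evaluation of $\int_0^{b-a}u^{\mathrm{Re}\,\alpha-1}\,du$ all check out, and the constant you obtain matches the one asserted.
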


\begin{proposition}\cite{7} \label{17}
Suppose $p\in\bar{\mathbb{R}}$ and $p\geq 1$. Let $p'\in\bar{\mathbb{R}}$ be such that $\frac{1}{p'}+\frac{1}{p'}=1$. Then, if $f\in L_\Delta^p(J^0)$ and $g\in L_\Delta^{p'}(J^0)$, then $f\cdot g\in L_\Delta^1(J^0)$ and
\begin{eqnarray*}
\|f\cdot g\|_{L_\Delta^1}\leq\|f\|_{L_\Delta^p}\cdot\|g\|_{L_\Delta^{p'}}.
\end{eqnarray*}
This expression is called H$\ddot{o}$lder's inequality and Cauchy$-$Schwarz's inequality whenever $p=2$.
\end{proposition}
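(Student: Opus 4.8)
The plan is to treat this as the time-scale analogue of the classical H\"older inequality, exploiting that the $\Delta$-integral is a genuine Lebesgue integral against the $\Delta$-measure on $J^0$, so that pointwise ($\Delta$-a.e.) estimates integrate in the usual way. First I would dispose of the degenerate cases. If either $\|f\|_{L_\Delta^p}=0$ or $\|g\|_{L_\Delta^{p'}}=0$, then the corresponding factor vanishes $\Delta$-a.e.\ on $J^0$, whence $f\cdot g=0$ $\Delta$-a.e.\ and both sides of the claimed inequality are zero; and if either norm equals $+\infty$ the inequality is trivially true. I may therefore assume $0<\|f\|_{L_\Delta^p},\ \|g\|_{L_\Delta^{p'}}<\infty$.

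For the principal case $1<p<\infty$, the key analytic input is Young's inequality: for nonnegative reals $A,B$ and conjugate exponents $p,p'$ one has $AB\le \tfrac{A^p}{p}+\tfrac{B^{p'}}{p'}$, which follows from the convexity of the exponential (equivalently, concavity of the logarithm). I would then normalize, setting $A=|f(s)|/\|f\|_{L_\Delta^p}$ and $B=|g(s)|/\|g\|_{L_\Delta^{p'}}$, and apply Young's inequality $\Delta$-a.e.\ on $J^0$ to obtain the pointwise bound
\[
\frac{|f(s)g(s)|}{\|f\|_{L_\Delta^p}\|g\|_{L_\Delta^{p'}}}\le \frac{1}{p}\frac{|f(s)|^p}{\|f\|_{L_\Delta^p}^p}+\frac{1}{p'}\frac{|g(s)|^{p'}}{\|g\|_{L_\Delta^{p'}}^{p'}}.
\]
The right-hand side is $\Delta$-integrable, being a linear combination of the integrands defining the two finite norms; hence $f\cdot g$ is $\Delta$-integrable and $f\cdot g\in L_\Delta^1(J^0)$. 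Integrating both sides over $J^0$ and invoking linearity and monotonicity of the $\Delta$-integral, together with Theorem \ref{LTm3} to pass from $\int_{J^0} f g$ to $\int_{J^0}|fg|$, the right-hand side evaluates to $\tfrac1p+\tfrac1{p'}=1$. Rearranging yields $\|f\cdot g\|_{L_\Delta^1}\le\|f\|_{L_\Delta^p}\,\|g\|_{L_\Delta^{p'}}$, as required.

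Finally I would handle the endpoint $p=1$, $p'=+\infty$ (and symmetrically $p=+\infty$, $p'=1$) directly from the definition of the $L_\Delta^\infty$ norm recorded in Theorem \ref{15}: since $|g(s)|\le\|g\|_{L_\Delta^\infty}$ holds $\Delta$-a.e.\ on $J^0$, one has $|f(s)g(s)|\le\|g\|_{L_\Delta^\infty}\,|f(s)|$ $\Delta$-a.e., and integrating against the $\Delta$-measure gives the claim. The only genuinely time-scale point to verify is that the essential supremum and the ``$\Delta$-a.e.'' statements are taken with respect to the $\Delta$-measure, which charges each right-scattered point $t$ with mass $\mu(t)>0$; consequently a set of $\Delta$-measure zero contains no right-scattered point, so the a.e.\ bounds above automatically hold at isolated points. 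I expect the main obstacle to be not the inequality itself but confirming this underlying measure-theoretic framework, namely that the $\Delta$-measure is a complete, $\sigma$-finite measure under which $f\cdot g$ is $\Delta$-measurable and the integral is linear and monotone; once that is in hand, the argument is identical to the real-variable case.
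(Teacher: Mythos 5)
Your argument is correct and is the standard Young's-inequality proof of H\"older's inequality for the Lebesgue $\Delta$-integral; the paper itself gives no proof, importing the result from \cite{7}, where essentially this same argument (pointwise Young's inequality after normalization, plus the trivial endpoint case $p=1$, $p'=\infty$) is used. Two minor remarks: the hypothesis in the statement should read $\frac{1}{p}+\frac{1}{p'}=1$ (an evident typo you silently correct), and the appeal to Theorem \ref{LTm3} is superfluous, since $\|f\cdot g\|_{L_\Delta^1}$ is by definition $\int_{J^0}\vert fg\vert(s)\,\Delta s$ and the pointwise bound integrates directly.
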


\begin{theorem}\cite{t3} (The first mean value theorem) \label{18}
Let $f$ and $g$ be bounded and integrable functions on $J$, and let $g$ be nonnegative (or nonpositive) on $J$. Let us set
\begin{eqnarray*}
m=\inf\{f(t):t\in J^0\} \quad and \quad M=\sup\{f(t):t\in J^0\}.
\end{eqnarray*}
Then there exists a real number $\Lambda$ satisfying the inequalities $m\leq \Lambda\leq M$ such that
\begin{eqnarray*}
\int_a^bf(t)g(t)\Delta t=\Lambda\int_a^bg(t)\Delta t.
\end{eqnarray*}
\end{theorem}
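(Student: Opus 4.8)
The plan is to prove this by the classical sandwiching argument, adapted to the $\Delta$-integral, together with a case distinction that handles the degenerate situation $\int_a^b g(t)\,\Delta t = 0$. First, since replacing $g$ by $-g$ leaves both sides of the claimed identity unchanged and merely flips the sign hypothesis, I would assume without loss of generality that $g \geq 0$ on $J$. Because $f$ is bounded, the quantities $m$ and $M$ are finite real numbers with $m \leq M$, and by their definition $m \leq f(t) \leq M$ for every $t \in J^0$.

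The key pointwise estimate is obtained by multiplying these inequalities by the nonnegative factor $g(t)$, which yields
\begin{eqnarray*}
m\,g(t) \leq f(t)\,g(t) \leq M\,g(t), \quad t \in J^0.
\end{eqnarray*}
Here I use that $f$ and $g$ are bounded and integrable, so the product $fg$ is integrable as well. Integrating this chain over $J^0$ and invoking monotonicity of the $\Delta$-integral, I would obtain
\begin{eqnarray*}
m\int_a^b g(t)\,\Delta t \leq \int_a^b f(t)\,g(t)\,\Delta t \leq M\int_a^b g(t)\,\Delta t.
\end{eqnarray*}
The monotonicity step reduces to the nonnegativity of the $\Delta$-integral of a nonnegative function, which follows directly from Theorem \ref{LTm3}: applied to the nonnegative integrands $w = Mg - fg$ and $w = fg - mg$, it gives $|\int_a^b w\,\Delta t| \leq \int_a^b |w|\,\Delta t = \int_a^b w\,\Delta t$, and this inequality is compatible only if $\int_a^b w\,\Delta t \geq 0$.

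It then remains only to extract $\Lambda$. If $\int_a^b g(t)\,\Delta t > 0$, I would set
\begin{eqnarray*}
\Lambda := \frac{\int_a^b f(t)\,g(t)\,\Delta t}{\int_a^b g(t)\,\Delta t},
\end{eqnarray*}
and the displayed inequalities give at once $m \leq \Lambda \leq M$, while clearing the denominator recovers the asserted identity. If instead $\int_a^b g(t)\,\Delta t = 0$, the same inequalities squeeze $\int_a^b f(t)\,g(t)\,\Delta t$ between $0$ and $0$, forcing it to vanish, so the identity holds for \emph{any} value of $\Lambda$; choosing $\Lambda := m$ (which lies in $[m,M]$ since $m \leq M$) finishes this case.

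I do not anticipate a genuine obstacle: the argument is elementary once monotonicity of the $\Delta$-integral is available. The only point requiring a little care is the degenerate case $\int_a^b g\,\Delta t = 0$, where $\Lambda$ is no longer uniquely determined and must be chosen by hand rather than as a quotient; there one must check that the interval $[m,M]$ is nonempty, which is exactly what boundedness of $f$ guarantees. The sole ingredient imported from the basic time-scale calculus is the nonnegativity of the integral of a nonnegative function, which I justify from Theorem \ref{LTm3} as above.
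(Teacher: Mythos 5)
The paper does not prove this statement: it is quoted verbatim from Bohner and Peterson \cite{t3} as a known preliminary, so there is no internal proof to compare against. Your argument is the standard sandwiching proof (multiply $m\leq f\leq M$ by $g\geq0$, integrate, extract $\Lambda$ as a quotient, and handle the degenerate case $\int_a^b g\,\Delta t=0$ separately), and it is correct; in particular, deriving monotonicity of the $\Delta$-integral from Theorem \ref{LTm3} applied to the nonnegative integrands $Mg-fg$ and $fg-mg$ is a legitimate shortcut, and the reduction to $g\geq0$ is sound since replacing $g$ by $-g$ negates both sides of the identity simultaneously.
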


\begin{corollary}\cite{t3} \label{19}
Let $f$ be an integrable function on $J$ and let $m$ and $M$ be the infimum and supremum, respectively, of $f$ on $J^0$. Then there exists a number $\Lambda$ between $m$ and $M$ such that
$
\int_a^bf(t)\Delta t=\Lambda(b-a).
$
\end{corollary}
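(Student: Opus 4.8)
The plan is to obtain this as an immediate specialization of the first mean value theorem (Theorem \ref{18}) by choosing the weight function to be identically one. Concretely, I would set $g(t)\equiv 1$ on $J$ and check that this $g$ satisfies all the hypotheses of Theorem \ref{18}: the constant function $1$ is bounded, it is $\Delta$-integrable on $J$, and it is nonnegative on $J$. Thus Theorem \ref{18} applies with $m=\inf\{f(t):t\in J^0\}$ and $M=\sup\{f(t):t\in J^0\}$, yielding a real number $\Lambda$ with $m\le\Lambda\le M$ such that
\begin{eqnarray*}
\int_a^b f(t)\cdot 1\,\Delta t=\Lambda\int_a^b 1\,\Delta t.
\end{eqnarray*}

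The remaining step is to evaluate the weighted integral on the right. I would use the fact that the constant function $1$ admits the delta antiderivative $t\mapsto t$, since $t^{\Delta}=1$ on $\mathbb{T}^k$, so that by the definition of the $\Delta$-integral one has $\int_a^b 1\,\Delta t=b-a$. Substituting this into the identity above gives
\begin{eqnarray*}
\int_a^b f(t)\,\Delta t=\Lambda(b-a),
\end{eqnarray*}
which is precisely the claimed conclusion, with $\Lambda$ lying between $m$ and $M$ as required.

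There is essentially no analytic obstacle here: the corollary is a direct specialization of the preceding theorem, and the only point worth stating carefully is the evaluation $\int_a^b 1\,\Delta t=b-a$, which holds uniformly for every time scale regardless of its scattered or dense structure. If one wished to be self-contained rather than invoking Theorem \ref{18}, the alternative would be to argue directly from the monotonicity of the $\Delta$-integral, namely $m(b-a)\le\int_a^b f\,\Delta t\le M(b-a)$, and then define $\Lambda:=\frac{1}{b-a}\int_a^b f(t)\,\Delta t$; but since Theorem \ref{18} is already available, the one-line reduction via $g\equiv 1$ is the cleanest route.
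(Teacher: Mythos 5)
Your proof is correct and is exactly the standard derivation: the paper states this corollary without proof as a quoted result from \cite{t3}, where it is obtained precisely by specializing the first mean value theorem (Theorem \ref{18}) to $g\equiv 1$ and using $\int_a^b 1\,\Delta t=b-a$ (cf.\ Lemma \ref{OL4}). Nothing further is needed.
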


\begin{theorem}\cite{t3} \label{20}
Let $f$ be a function defined on $J$ and let $c\in\mathbb{T}$ with $a<c<b$. If $f$ is $\Delta-$integrable from $a$ to $c$ and from $c$ to $b$, then $f$ is $\Delta-$integrable from $a$ to $b$ and
\begin{eqnarray*}
\int_a^bf(t)\Delta t=\int_a^cf(t)\Delta t+\int_c^bf(t)\Delta t.
\end{eqnarray*}
\end{theorem}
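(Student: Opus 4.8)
The plan is to build a single delta antiderivative of $f$ on all of $J$ by gluing together the antiderivatives guaranteed on the two subintervals, since the $\Delta$-integral in the sense adopted here is defined through antiderivatives. By hypothesis there is a continuous $F_1$ on $[a,c]\cap\mathbb{T}$ with $F_1^\Delta=f$ and a continuous $F_2$ on $[c,b]\cap\mathbb{T}$ with $F_2^\Delta=f$, so that $\int_a^c f\,\Delta t=F_1(c)-F_1(a)$ and $\int_c^b f\,\Delta t=F_2(b)-F_2(c)$. I would then define the candidate antiderivative on $J$ by $F(t)=F_1(t)$ for $t\in[a,c]\cap\mathbb{T}$ and $F(t)=F_2(t)-F_2(c)+F_1(c)$ for $t\in[c,b]\cap\mathbb{T}$. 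The shift $-F_2(c)+F_1(c)$ is chosen precisely so the two formulas agree at $c$, whence $F$ is well defined and continuous on $J$ (continuity on each closed piece is inherited from $F_1$ and $F_2$, and the pieces match at the single overlap point).

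Next I would verify that $F$ is a delta antiderivative of $f$ on $J$, i.e. $F^\Delta(t)=f(t)$ for every admissible $t$. Away from $c$ this is immediate, because $F$ coincides locally with $F_1$ or with a constant shift of $F_2$, and a constant shift does not change the delta derivative. At $t=c$ the verification splits into the two structural cases for a time-scale point. If $c$ is right-scattered, then $\sigma(c)\in[c,b]\cap\mathbb{T}$ and the difference-quotient formula for the delta derivative gives directly $F^\Delta(c)=\frac{F(\sigma(c))-F(c)}{\mu(c)}=\frac{F_2(\sigma(c))-F_2(c)}{\mu(c)}=F_2^\Delta(c)=f(c)$. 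If $c$ is right-dense, the delta derivative at $c$ is the ordinary two-sided limit of the difference quotient as $s\to c$ within $\mathbb{T}$; approaching from the right the quotient is governed by $F_2$ and tends to $F_2^\Delta(c)=f(c)$, while approaching from the left it is governed by $F_1$ and tends to $F_1^\Delta(c)=f(c)$, so the two-sided limit exists and equals $f(c)$.

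Once $F$ is known to be an antiderivative of $f$ on all of $J$, the definition of the $\Delta$-integral delivers both the integrability of $f$ from $a$ to $b$ and the formula simultaneously: $\int_a^b f\,\Delta t=F(b)-F(a)=\bigl(F_2(b)-F_2(c)+F_1(c)\bigr)-F_1(a)=\bigl(F_1(c)-F_1(a)\bigr)+\bigl(F_2(b)-F_2(c)\bigr)=\int_a^c f\,\Delta t+\int_c^b f\,\Delta t$.

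The step I expect to be the main obstacle is the differentiability check at the junction point $c$ in the right-dense case, where one must argue that the one-sided delta derivatives inherited from $F_1$ and $F_2$ fit together into a genuine two-sided derivative of the glued function (when $c$ is moreover left-scattered, the left approach is vacuous and only the $F_2$ side contributes, which simplifies matters); the right-scattered case, by contrast, is a one-line computation. If one prefers to avoid matching one-sided limits, the same theorem follows from the Riemann-sum characterization of the $\Delta$-integral by inserting $c$ as a node into partitions of $J$ and invoking additivity of Riemann sums over the cells together with the Cauchy integrability criterion, but the antiderivative argument is the most direct given the definition in force here.
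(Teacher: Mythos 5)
The paper offers no proof of Theorem \ref{20}: it is quoted verbatim from \cite{t3}, so there is no in-paper argument to compare yours against. On its own terms your gluing construction is correct for the antiderivative-based (Newton-type) $\Delta$-integral that this paper adopts in the definition quoted from \cite{3}: the shifted concatenation $F$ is well defined and continuous, the only delicate point is the junction at $c$, and you treat both structural cases properly --- the right-scattered case by the difference-quotient formula, and the right-dense case by matching the one-sided limits coming from $F_2$ and $F_1$, correctly observing that when $c$ is left-scattered the left approach is vacuous (which is precisely the case in which $c\notin([a,c]\cap\mathbb{T})^k$ and $F_1^\Delta(c)$ need not even exist). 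The one caveat worth flagging is definitional: in the source \cite{t3} this theorem belongs to the theory of the Riemann $\Delta$-integral, where ``$\Delta$-integrable'' is defined through Darboux/Riemann sums and does not presuppose the existence of a delta antiderivative; under that reading your main argument does not apply as stated, and the partition/Cauchy-criterion route you sketch in your closing sentence is the one actually required (insert $c$ as a node and use additivity of the Riemann sums over the two subpartitions). Since the present paper explicitly defines $\int_a^b f(t)\Delta t$ via antiderivatives, your proof is adequate for the statement as it is used here, but you should state up front which notion of integrability you are working with.
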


\begin{lemma}\cite{t3} \label{OL4}
Assume that $a,b\in\mathbb{T}$. Every constant function $f:\mathbb{T}\rightarrow\mathbb{R}$ is $\Delta-$integrable from $a$ to $b$ and
\begin{eqnarray*}
\int_{J^0}c\Delta t=c(b-a).
\end{eqnarray*}
\end{lemma}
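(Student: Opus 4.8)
The plan is to exhibit an explicit delta antiderivative for the constant function and then read off both integrability and the value from the definition of the $\Delta$-integral. Fix $c\in\mathbb{R}$ and let $f(t)\equiv c$. The natural candidate is $F(t)=ct$ for $t\in J$, and the first step would be to check the three requirements in the definition of a delta antiderivative: continuity on $J$, delta differentiability on $J^k$, and $F^\Delta=f$. Continuity is immediate, since $F$ is the restriction to $\mathbb{T}$ of a continuous function on $\mathbb{R}$.

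The one substantive computation is $F^\Delta$. I would first record that the identity function $g(t)=t$ satisfies $g^\Delta(t)=1$ at every $t\in J^k$, which follows straight from the defining inequality of the Hilger derivative: for $g(t)=t$ one has
\[
\bigl| g(\sigma(t)) - g(s) - 1\cdot(\sigma(t)-s)\bigr| = \bigl|\sigma(t)-s-(\sigma(t)-s)\bigr| = 0 \le \varepsilon\,|\sigma(t)-s|
\]
for every $\varepsilon>0$ and every admissible $s$, so the number $g^\Delta(t)=1$ meets the requirement. By linearity of the delta derivative, $F^\Delta(t)=(cg)^\Delta(t)=c\,g^\Delta(t)=c=f(t)$ on $J^k$, confirming that $F$ is a delta antiderivative of $f$.

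With $F$ identified as an antiderivative, I would invoke the definition of the $\Delta$-integral to conclude
\[
\int_{J^0} c\,\Delta t = \int_a^b f(t)\,\Delta t = F(b)-F(a) = cb-ca = c(b-a),
\]
which simultaneously yields $\Delta$-integrability of $f$ and the claimed value. The main obstacle, and it is a mild one, is simply the clean verification that the identity map has delta derivative identically equal to $1$ directly from the $\varepsilon$-definition; everything afterward is linearity and the antiderivative definition, with no genuine calculation remaining.

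As an alternative route, once integrability is secured via the antiderivative one could instead apply Corollary \ref{19}: for $f\equiv c$ the infimum and supremum on $J^0$ coincide, $m=M=c$, so the intermediate value $\Lambda$ is forced to equal $c$, giving $\int_a^b c\,\Delta t=\Lambda(b-a)=c(b-a)$. I would present the antiderivative argument as the primary proof since it establishes integrability and the value in a single step.
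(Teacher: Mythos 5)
Your proof is correct. The paper does not prove this lemma at all --- it is quoted from the reference \cite{t3} as a known preliminary fact --- so there is nothing to compare against; your argument via the explicit antiderivative $F(t)=ct$, the direct $\varepsilon$-verification that the identity map has Hilger derivative $1$, and the definition $\int_a^b f(t)\Delta t=F(b)-F(a)$ is exactly the standard proof one finds in the cited source. (A tiny simplification: you could verify $F^\Delta(t)=c$ for $F(t)=ct$ in one line from the defining inequality, since $|F(\sigma(t))-F(s)-c(\sigma(t)-s)|=0$, avoiding the appeal to linearity of the delta derivative.)
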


\begin{lemma}\cite{11} (A time scale version of the Arzela$-$Ascoli theorem)\label{21}
Let $X$ be a subset of $C(J,\mathbb{R})$ satisfying the following conditions:
\begin{itemize}
  \item [$(i)$]
  $X$ is bounded;
  \item [$(ii)$]
  For any given $\epsilon>0$, there exists $\delta>0$ such that $t_1,t_2\in J$, $\vert t_1-t_2\vert<\delta$ implies $\vert f(t_1)-f(t_2)\vert<\epsilon$ for all $f\in X$.
\end{itemize}
Then, $X$ is relatively compact.
\end{lemma}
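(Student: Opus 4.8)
The plan is to prove relative compactness by showing that every sequence in $X$ admits a subsequence converging uniformly on $J$. Since $J=[a,b]\cap\mathbb{T}$ is a closed and bounded subset of $\mathbb{R}$, it is compact in the inherited topology, so $C(J,\mathbb{R})$ equipped with the supremum norm $\|\cdot\|_\infty$ is a complete metric space (indeed a Banach space), and uniform convergence is precisely convergence in this space. Relative compactness of $X$ is therefore equivalent to the statement that every sequence $\{f_n\}\subset X$ possesses a subsequence converging in $(C(J,\mathbb{R}),\|\cdot\|_\infty)$, and this is what I would establish.

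First I would exploit separability: as a compact metric space, $J$ contains a countable dense subset $D=\{t_k:k\in\mathbb{N}\}$. By hypothesis $(i)$ the set $\{f_n(t_1):n\in\mathbb{N}\}$ is bounded in $\mathbb{R}$, hence by the Bolzano--Weierstrass theorem it has a convergent subsequence; repeating this extraction successively at $t_2,t_3,\dots$ and passing to the diagonal subsequence yields a single subsequence, which I still denote $\{f_n\}$, such that $\lim_{n}f_n(t_k)$ exists for every $k\in\mathbb{N}$.

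Next I would upgrade pointwise convergence on $D$ to uniform convergence on all of $J$ by means of the equicontinuity hypothesis $(ii)$. Given $\epsilon>0$, choose $\delta>0$ as in $(ii)$. Because $J$ is compact, finitely many of the sets $(t_k-\delta,t_k+\delta)\cap\mathbb{T}$ cover $J$, say for $k=1,\dots,N$. For these finitely many indices the diagonal subsequence converges, so there exists $M$ with $|f_n(t_k)-f_m(t_k)|<\epsilon$ for all $n,m\ge M$ and all $k\le N$. For an arbitrary $t\in J$ I select $t_k$ with $k\le N$ and $|t-t_k|<\delta$; then $(ii)$ together with the triangle inequality gives $|f_n(t)-f_m(t)|\le|f_n(t)-f_n(t_k)|+|f_n(t_k)-f_m(t_k)|+|f_m(t_k)-f_m(t)|<3\epsilon$ for all $n,m\ge M$. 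Hence $\{f_n\}$ is uniformly Cauchy, and by completeness of $C(J,\mathbb{R})$ it converges uniformly to some $f\in C(J,\mathbb{R})$, completing the argument.

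I do not anticipate a genuine obstacle peculiar to the time scale structure: everything reduces to the facts that $J$ is a compact, hence separable, metric subspace of $\mathbb{R}$ and that the functions are continuous in the ordinary sense, so the classical diagonal-plus-covering argument transfers verbatim. The only point requiring mild care is the covering step, where the $\delta$-neighborhoods must be intersected with $\mathbb{T}$ so that their centers and the points they cover all lie in $J$; this is automatic since $J$ carries the metric of $\mathbb{R}$. The place where compactness of $J$ is truly essential is the completeness of $(C(J,\mathbb{R}),\|\cdot\|_\infty)$, which guarantees that the uniform limit $f$ is again continuous and bounded, so that the extracted limit genuinely belongs to the space.
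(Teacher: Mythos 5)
The paper does not prove this lemma at all: it is quoted verbatim from the reference cited as \cite{11} and used as a black box, so there is no in-paper argument to compare yours against. Your proof is nonetheless correct and self-contained. It is the classical Arzel\`a--Ascoli argument for a compact metric space: $J=[a,b]\cap\mathbb{T}$ is closed and bounded in $\mathbb{R}$, hence compact and separable, so the diagonal extraction over a countable dense set followed by the finite $\delta$-cover and the $3\epsilon$ triangle inequality goes through verbatim, and you correctly observe that the time-scale structure plays no role beyond guaranteeing that $J$ is a compact metric subspace of $\mathbb{R}$. The one phrase I would adjust is the closing remark that compactness of $J$ is ``truly essential'' for the completeness of $(C(J,\mathbb{R}),\|\cdot\|_\infty)$: completeness of the space of bounded continuous functions holds on any metric space; what compactness actually buys you is the finite subcover in the equicontinuity step (which you do use) and the fact that continuous functions on $J$ are automatically bounded so the sup norm is finite. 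This is a cosmetic imprecision, not a gap.
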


\section{Some fundamental properties of Left Riemann-Liouville fractional operators on time scales}
\setcounter{equation}{0}
\indent

Inspired by \cite{1'}, we can obtain the consistency of Definition \ref{3} and Definition \ref{d3'} by using the above theory of the Laplace transform on time scales and the inverse Laplace transform on time scales.

\begin{theorem}\label{t4'}
Let $\alpha>0$, $\mathbb{T}$ is a time scale, $[a,b]_{\mathbb{T}}$ is an interval of $\mathbb{T}$, and $f$ is an integrable function on $[a,b]_{\mathbb{T}}$. Then, $\left({_a^{\mathbb{T}}I_t^\alpha} f\right)(t)=I_{\mathbb{T}}^\alpha f(t)$.
\end{theorem}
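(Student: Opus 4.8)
The plan is to prove the identity at the level of time-scale Laplace transforms and then invoke the uniqueness furnished by the inversion formula (Theorem \ref{t1'}). Because $I_{\mathbb{T}}^\alpha f$ is defined in Definition \ref{d3'} as $\mathcal{L}_{\mathbb{T}}^{-1}[F(z)/z^\alpha]$ with $F=\mathcal{L}_{\mathbb{T}}\{f\}$, it suffices to verify that the integral of Definition \ref{3} has transform $F(z)/z^\alpha$, i.e.
\[
\mathcal{L}_{\mathbb{T}}\big\{{_a^{\mathbb{T}}I_t^\alpha} f\big\}(z)=\frac{F(z)}{z^\alpha}.
\]
I would first recognize ${_a^{\mathbb{T}}I_t^\alpha}f$ as the convolution of $f$ with the power kernel $h_\alpha(t)=t^{\alpha-1}/\Gamma(\alpha)$; the commutativity and associativity of convolution in Theorem \ref{LTm4} provide the algebraic framework in which the convolution rule $\mathcal{L}_{\mathbb{T}}\{h_\alpha \ast f\}=\mathcal{L}_{\mathbb{T}}\{h_\alpha\}\cdot F$ holds. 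The whole theorem then collapses to the single kernel identity $\mathcal{L}_{\mathbb{T}}\{h_\alpha\}(z)=z^{-\alpha}$, equivalently $\mathcal{L}_{\mathbb{T}}^{-1}[z^{-\alpha}]=h_\alpha$.

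For integer order $\alpha=n$ this identity is elementary. By the Cauchy result on time scales (Theorem \ref{t2'}) the convolution $h_n \ast f$ is exactly the $n$-fold iterated $\Delta$-integral of $f$ based at $a$, while Theorem \ref{t3'}, applied once for each of the $n$ integrations, multiplies the transform by $1/z$ at every step. Hence $\mathcal{L}_{\mathbb{T}}\{{_a^{\mathbb{T}}I_t^n}f\}(z)=F(z)/z^n$, and choosing $f$ to be the unit at the base point isolates $\mathcal{L}_{\mathbb{T}}\{h_n\}(z)=1/z^n$. This establishes Theorem \ref{t4'} on the integers.

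For arbitrary real $\alpha>0$ the integer computation does not transfer by analytic continuation alone, since $\{1,2,\dots\}$ has no limit point in the half-plane $\operatorname{Re}\alpha>0$ on which both sides are analytic. Instead I would transport the classical continuous-time identity $\mathcal{L}\{t^{\alpha-1}/\Gamma(\alpha)\}(z)=z^{-\alpha}$ to the time scale through the correspondence clause of the inversion formula (Theorem \ref{t1'}), which explicitly relates the ordinary transform $\widetilde{F}_{\mathbb{R}}$ of the real extension $\widetilde{f}$ to the time-scale transform $F_{\mathbb{T}}$. Feeding $\mathcal{L}_{\mathbb{T}}\{h_\alpha\}(z)=z^{-\alpha}$ back into the convolution rule yields $\mathcal{L}_{\mathbb{T}}\{{_a^{\mathbb{T}}I_t^\alpha}f\}(z)=F(z)/z^\alpha$, and one final appeal to Theorem \ref{t1'} gives ${_a^{\mathbb{T}}I_t^\alpha}f=I_{\mathbb{T}}^\alpha f$.

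The main obstacle is precisely this non-integer kernel identity. For $0<\alpha<1$ the function $z^{-\alpha}$ has a branch point, not a pole, at $z=0$, so the residue form of Theorem \ref{t1'} cannot be applied verbatim; one must instead work through its correspondence with the $\mathbb{R}$-transform and check that the integrability and decay hypotheses (namely $\int_{c-i\infty}^{c+i\infty}|\widetilde{F}_{\mathbb{R}}(z)|\,|dz|<\infty$ together with uniform decay as $|z|\to\infty$) genuinely hold for the shifted kernel. Justifying the convolution rule on the time scale for the singular kernel $h_\alpha$, whose $\Delta$-integral against $f$ is only delicately controlled near $t=\sigma(s)$, is the other subtle point. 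Once these analytic technicalities are settled, the remaining computations are routine.
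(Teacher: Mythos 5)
Your overall strategy is the same as the paper's: compute the time-scale Laplace transform of ${_a^{\mathbb{T}}}I_t^\alpha f$, show that it equals $F(z)/z^\alpha$, and then invert via Definition \ref{d3'}. The difference lies in how the middle step is handled. The paper rewrites $\frac{1}{\Gamma(\alpha)}\int_a^t(t-\sigma(s))^{\alpha-1}f(s)\Delta s$ as an ``$\alpha$-fold iterated integral'' $\int_a^{t_\alpha}\cdots\int_a^{t_1}f(t_0)\Delta t_0\cdots\Delta t_{\alpha-1}$ by invoking Theorem \ref{t2'}, and then applies Theorem \ref{t3'} ``$\alpha$ times'' to produce the factor $z^{-\alpha}$ --- a computation that is literally meaningful only for integer $\alpha$, and Theorem \ref{t2'} is in any case stated only for $n\in\{1,2\}$. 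You instead factor the claim through the convolution rule and the single kernel identity $\mathcal{L}_{\mathbb{T}}\{t^{\alpha-1}/\Gamma(\alpha)\}(z)=z^{-\alpha}$, prove that identity honestly for integer order, and correctly observe that the non-integer case cannot be obtained from the integers by analytic continuation. In that respect your outline is more candid than the paper's proof about where the real difficulty sits.

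The gap, however, is that your proposal stops exactly where the work begins: the kernel identity for non-integer $\alpha$ is never established. ``Transporting'' the classical identity through the correspondence clause of Theorem \ref{t1'} is a pointer rather than an argument, and, as you note yourself, the residue form of that theorem does not apply at a branch point, so some genuinely new analysis would be needed. Two further steps are also not free. First, recognizing ${_a^{\mathbb{T}}}I_t^\alpha f$ as a convolution $h_\alpha\ast f$ in the sense of Theorem \ref{LTm4} requires that $(t-\sigma(s))^{\alpha-1}$ be the time-scale shift of $t^{\alpha-1}$, which does not hold on an arbitrary time scale, so the convolution rule $\mathcal{L}_{\mathbb{T}}\{h_\alpha\ast f\}=\mathcal{L}_{\mathbb{T}}\{h_\alpha\}\cdot F$ cannot simply be quoted. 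Second, the transform and inversion machinery carries hypotheses (regressivity of poles, uniform decay of $F$, integrability along the Bromwich contour) that are not verified for an arbitrary integrable $f$ on $[a,b]_{\mathbb{T}}$. So your proposal reproduces the paper's strategy and diagnoses its weakest link accurately, but it does not close that link --- and, to be fair, neither does the paper's own proof.
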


\begin{proof}
Using the Laplace transform on time scale for (\ref{2.5}), in view of Definition \ref{3}, Theorem \ref{t2'}, Theorem \ref{t3'} and Definition \ref{d2'}, we have
{\setlength\arraycolsep{2pt}
\begin{eqnarray}\label{e1'}
&&\mathcal{L}_{\mathbb{T}}\left\{\left({_a^{\mathbb{T}}I_t^\alpha} f\right)(t)\right\}(z)\nonumber\\
&=&\mathcal{L}_{\mathbb{T}}\left\{\frac{1}{\Gamma(\alpha)}
\int_a^t(t-\sigma(s))^{\alpha-1}f(s)\Delta s\right\}(z)\nonumber\\
&=&\mathcal{L}_{\mathbb{T}}\left\{\int_{a}^{t_\alpha}\ldots\int_{a}^{t_1} f(t_0)\Delta t_0\ldots\Delta t_{\alpha-1}\right\}(z)\nonumber\\
&=&\frac{1}{z^\alpha}\mathcal{L}_{\mathbb{T}}\{f\}(z)\nonumber\\
&=&\frac{F(z)}{z^\alpha}(t).
\end{eqnarray}}
Taking the inverse Laplace transform on time scale for (\ref{2.5}), with an eye to Definition \ref{d3'}, one arrives at
\begin{eqnarray*}
\left({_a^{\mathbb{T}}I_t^\alpha} f\right)(t)=\mathcal{L}_{\mathbb{T}}^{-1}\left[\frac{F(z)}{z^\alpha}\right](t)
=I_{\mathbb{T}}^\alpha f(t).
\end{eqnarray*}
The proof is complete.
\end{proof}

Combining with \cite{3, 6'} and Theorem \ref{t4'}, we see that Proposition 15, Proposition 16, Proposition 17, Corollary 18, Theorem 20 and Theorem 21 remain intact under the new Definition \ref{3}.

\begin{proposition}\label{5}
Let $h$ be $\Delta-$integrable on $J$ and $0<\alpha\leq 1$. Then
$_a^{\mathbb{T}}D_t^\alpha h(t)=\Delta\circ{_a^{\mathbb{T}}I_t^{1-\alpha}}h(t).$
\end{proposition}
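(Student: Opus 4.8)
The plan is to recognize that the asserted identity is, at bottom, a restatement of Definition \ref{4} once the operator ${}_a^{\mathbb{T}}I_t^{1-\alpha}$ is spelled out through Definition \ref{3}. I would begin by specializing Definition \ref{3} to the order $1-\alpha$; for $0<\alpha<1$ this is legitimate because then $0<1-\alpha<1$ is an admissible order for the left fractional integral (the boundary value $\alpha=1$ is dealt with separately below). Writing out (\ref{2.5}) with $\alpha$ replaced by $1-\alpha$ gives
\[
{}_a^{\mathbb{T}}I_t^{1-\alpha}h(t)=\int_a^t\frac{(t-\sigma(s))^{(1-\alpha)-1}}{\Gamma(1-\alpha)}h(s)\,\Delta s=\frac{1}{\Gamma(1-\alpha)}\int_a^t(t-\sigma(s))^{-\alpha}h(s)\,\Delta s,
\]
which is exactly the bracketed expression that appears inside the defining formula (\ref{2.6}) for the Riemann--Liouville derivative.

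Having identified this, I would simply read off (\ref{2.6}): by definition
\[
{}_a^{\mathbb{T}}D_t^\alpha h(t)=\left({}_a^{\mathbb{T}}I_t^{1-\alpha}h(t)\right)^\Delta,
\]
and interpreting the right-hand side as the $\Delta$-derivative operator applied to the output of the fractional-integral operator is precisely the composition $\Delta\circ{}_a^{\mathbb{T}}I_t^{1-\alpha}$. This yields the claimed equality, so the bulk of the argument is a transparent unfolding of the two definitions.

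The only point requiring genuine care --- and what I would flag as the main (though modest) obstacle --- is well-definedness across the full parameter range. For $0<\alpha<1$ the kernel $(t-\sigma(s))^{-\alpha}$ is singular as $s\to t$, so one should verify that $\Delta$-integrability of $h$ still renders $t\mapsto{}_a^{\mathbb{T}}I_t^{1-\alpha}h(t)$ a well-defined function whose $\Delta$-derivative can legitimately be formed; the absolute-value estimate of Theorem \ref{LTm3}, combined with the integrability of the kernel against $h$, handles this. At the endpoint $\alpha=1$ one has $1-\alpha=0$ and $\Gamma(1-\alpha)$ carries a pole, so the symbol ${}_a^{\mathbb{T}}I_t^{0}$ must be read, by the usual convention, as the identity operator; with this reading the right-hand side collapses to $h^\Delta$, which matches ${}_a^{\mathbb{T}}D_t^1 h$, and the identity persists at the boundary.
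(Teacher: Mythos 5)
Your proof is correct and takes essentially the same route as the paper's: both simply unfold Definition \ref{3} at order $1-\alpha$ together with Definition \ref{4} and read off the identity as a composition of the $\Delta$-derivative with the fractional integral operator. The extra care you devote to the singular kernel and to the convention at $\alpha=1$ goes beyond what the paper records but does not alter the argument.
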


\begin{proof}
Let $h:\mathbb{T}\rightarrow\mathbb{R}$. In view of $(\ref{2.5})$  and $(\ref{2.6})$, we obtain
\begin{align*}
{_t^\mathbb{T}}D_b^{\alpha}h(t)
=&\frac{1}{\Gamma(1-\alpha)}\bigg(\int_a^t(t-\sigma(s))^{-\alpha}h(s)\Delta s\bigg)^\Delta\\
=&\bigg({_a^{\mathbb{T}}}I_t^{1-\alpha} h(t)\bigg)^\Delta\\
=&\Delta\circ{_a^{\mathbb{T}}I_t^{1-\alpha}}h(t).
\end{align*}
The proof is complete.
\end{proof}

\begin{proposition}\label{6}
For any function $h$ that is integrable on $J$, the Riemann$-$Liouville $\Delta-$fractional integral satisfies
$
_a^{\mathbb{T}}I_t^\alpha\circ{_a^{\mathbb{T}}I_t^\beta}={_a^{\mathbb{T}}}I_t^{\alpha+\beta}
={_a^{\mathbb{T}}I_t^\beta}\circ{_a^{\mathbb{T}}I_t^\alpha}$
for $\alpha>0$ and $\beta>0$.
\end{proposition}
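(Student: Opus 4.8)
The plan is to reduce the whole identity to the Laplace-transform characterization furnished by Theorem \ref{t4'} and Definition \ref{d3'}, so that the semigroup property becomes the trivial scalar fact $z^{-\alpha}z^{-\beta}=z^{-(\alpha+\beta)}$ on the transform side. The chief advantage of this route is that it sidesteps having to establish a time-scale analogue of the Beta-function identity
\[
\int_{\sigma(\tau)}^{t}\frac{(t-\sigma(s))^{\beta-1}}{\Gamma(\beta)}\,\frac{(s-\sigma(\tau))^{\alpha-1}}{\Gamma(\alpha)}\,\Delta s=\frac{(t-\sigma(\tau))^{\alpha+\beta-1}}{\Gamma(\alpha+\beta)},
\]
which is precisely the obstruction one meets in trying to prove the formula directly by a Fubini-type interchange of $\Delta$-integrals, and which need not hold in the naive form on an arbitrary time scale.

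First I would isolate the transform rule already implicit in the proof of Theorem \ref{t4'}: for any integrable $f$ and any $\gamma>0$, equation (\ref{e1'}) read with $\gamma$ in place of $\alpha$ gives
\[
\mathcal{L}_{\mathbb{T}}\left\{\left({_a^{\mathbb{T}}I_t^\gamma}f\right)(t)\right\}(z)=\frac{F(z)}{z^\gamma},\qquad F(z)=\mathcal{L}_{\mathbb{T}}\{f\}(z).
\]
Setting $g:={_a^{\mathbb{T}}I_t^\alpha}f$, I would first check that $g$ is again integrable on $J$, so that $_a^{\mathbb{T}}I_t^\beta$ may legitimately be applied to it; this follows from the boundedness of the fractional integration operator (Theorem \ref{16}) transported to the time-scale setting through the equivalence of Theorem \ref{t4'}.

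Next I would apply the transform rule twice. Using it with $\gamma=\beta$ on $g$, and then with $\gamma=\alpha$ on $f$, I obtain
\[
\mathcal{L}_{\mathbb{T}}\left\{\left({_a^{\mathbb{T}}I_t^\beta}\circ{_a^{\mathbb{T}}I_t^\alpha}\right)f\right\}(z)=\frac{1}{z^\beta}\,\mathcal{L}_{\mathbb{T}}\{g\}(z)=\frac{1}{z^\beta}\cdot\frac{F(z)}{z^\alpha}=\frac{F(z)}{z^{\alpha+\beta}}.
\]
On the other hand, the same rule with $\gamma=\alpha+\beta$ gives $\mathcal{L}_{\mathbb{T}}\{{_a^{\mathbb{T}}I_t^{\alpha+\beta}}f\}(z)=F(z)/z^{\alpha+\beta}$. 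Since the two functions share the same time-scale Laplace transform, the inversion formula of Theorem \ref{t1'} yields $\left({_a^{\mathbb{T}}I_t^\beta}\circ{_a^{\mathbb{T}}I_t^\alpha}\right)f={_a^{\mathbb{T}}I_t^{\alpha+\beta}}f$. Interchanging the roles of $\alpha$ and $\beta$ (the product $z^{-\alpha}z^{-\beta}$ is symmetric) delivers the remaining equality, so both halves of the asserted semigroup property drop out at once.

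The step I expect to be the main obstacle is the rigorous invocation of uniqueness of the time-scale Laplace transform through Theorem \ref{t1'}: its hypotheses demand analyticity and uniform decay of $F(z)/z^{\alpha+\beta}$ in the region $Re_\mu(z)>Re_\mu(c)$ together with the integrability condition on the associated real transform, and I would have to verify these for the class of integrable $f$ at hand, or else prove the identity first on a dense subclass and pass to the limit using the $L^p$ estimate of Theorem \ref{16}. A secondary technical point is fixing a single branch for the fractional powers so that the factorization $z^{-\alpha}z^{-\beta}=z^{-(\alpha+\beta)}$ is literally valid; restricting to the half-plane $Re_\mu(z)>Re_\mu(c)$ of Theorem \ref{t1'}, on which these powers are single-valued, removes that ambiguity.
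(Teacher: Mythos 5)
Your proposal is correct and follows essentially the same route as the paper: the paper proves this proposition by combining Theorem \ref{t4'} with the semigroup property of $I_{\mathbb{T}}^\alpha$ cited from Proposition 3.4 of \cite{6'}, which is itself a Laplace-transform argument of exactly the kind you carry out. The only difference is that you unfold that external citation and prove the transform-side identity $\mathcal{L}_{\mathbb{T}}\{{_a^{\mathbb{T}}I_t^\beta}\circ{_a^{\mathbb{T}}I_t^\alpha}f\}=F(z)/z^{\alpha+\beta}$ explicitly, while also flagging (reasonably) the hypotheses needed to invoke the inversion formula of Theorem \ref{t1'}.
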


\begin{proof}
Combining with Proposition 3.4 in \cite{6'} and Theorem \ref{t4'}, one gets that
\begin{eqnarray*}
_a^{\mathbb{T}}I_t^\alpha\circ{_a^{\mathbb{T}}I_t^\beta}
={_a^{\mathbb{T}}}I_t^{\alpha+\beta}.
\end{eqnarray*}
In a similarly way, one arrives at
\begin{eqnarray*}
_a^{\mathbb{T}}I_t^\beta\circ{_a^{\mathbb{T}}I_t^\alpha}
={_a^{\mathbb{T}}}I_t^{\alpha+\beta}.
\end{eqnarray*}
Consequently, we obtain that
\begin{eqnarray*}
_a^{\mathbb{T}}I_t^\alpha\circ{_a^{\mathbb{T}}I_t^\beta}={_a^{\mathbb{T}}}I_t^{\alpha+\beta}
={_a^{\mathbb{T}}I_t^\beta}\circ{_a^{\mathbb{T}}I_t^\alpha}.
\end{eqnarray*}
The proof is complete.
\end{proof}

\begin{proposition}\label{173}
For any function $h$ that is integrable on $J$ one has
$
_a^{\mathbb{T}}D_t^\alpha\circ{_a^{\mathbb{T}}I_t^\alpha h}=h.
$
\end{proposition}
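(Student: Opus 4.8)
The plan is to unfold the definition of the left Riemann$-$Liouville derivative and reduce the composition to a single integer-order integral followed by a delta derivative. By Definition \ref{4}, for $0<\alpha\leq 1$ we have
\[
{_a^{\mathbb{T}}D_t^\alpha}\Big({_a^{\mathbb{T}}I_t^\alpha}h\Big)(t)=\Big({_a^{\mathbb{T}}I_t^{1-\alpha}}\big({_a^{\mathbb{T}}I_t^\alpha}h\big)\Big)^\Delta(t),
\]
so the first task is to collapse the inner composition ${_a^{\mathbb{T}}I_t^{1-\alpha}}\circ{_a^{\mathbb{T}}I_t^\alpha}$ into a single fractional integral.

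First I would apply the semigroup property of Proposition \ref{6} with exponents $1-\alpha$ and $\alpha$ to obtain ${_a^{\mathbb{T}}I_t^{1-\alpha}}\circ{_a^{\mathbb{T}}I_t^\alpha}={_a^{\mathbb{T}}I_t^{1}}$. Then I would evaluate the order-one integral explicitly: since $(t-\sigma(s))^{0}=1$ and $\Gamma(1)=1$, Definition \ref{3} gives
\[
{_a^{\mathbb{T}}I_t^{1}}h(t)=\int_a^t h(s)\,\Delta s.
\]
It remains to differentiate, and here I would invoke that the $\Delta$-integral from $a$ is a delta antiderivative of its integrand, so that $\big(\int_a^t h(s)\,\Delta s\big)^\Delta=h(t)$; this recovers $h$ and closes the argument.

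The main obstacle is the treatment of the endpoint $\alpha=1$, where the inner exponent $1-\alpha$ vanishes and Proposition \ref{6}, stated for strictly positive orders, does not apply verbatim; there one must interpret ${_a^{\mathbb{T}}I_t^{0}}$ as the identity operator and argue directly that ${_a^{\mathbb{T}}D_t^1}\big({_a^{\mathbb{T}}I_t^1}h\big)=\big(\int_a^t h(s)\,\Delta s\big)^\Delta=h$. A second, more technical point is the justification of the fundamental-theorem step $\big(\int_a^t h(s)\,\Delta s\big)^\Delta=h(t)$ when $h$ is merely $\Delta$-integrable rather than $rd$-continuous: the identity then holds only $\Delta$-almost everywhere on $J^0$, which is the correct sense in which the claimed equality should be read.
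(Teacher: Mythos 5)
Your proof is correct and follows essentially the same route as the paper's: unfold the Riemann--Liouville derivative (Proposition \ref{5}, i.e.\ Definition \ref{4}), collapse ${_a^{\mathbb{T}}}I_t^{1-\alpha}\circ{_a^{\mathbb{T}}}I_t^{\alpha}$ to ${_a^{\mathbb{T}}}I_t^{1}$ by the semigroup property of Proposition \ref{6}, and then delta-differentiate the resulting order-one integral. Your added caveats about the $\alpha=1$ endpoint and the $\Delta$-a.e.\ sense of the fundamental-theorem step are points the paper silently glosses over, but they do not alter the argument.
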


\begin{proof}
Taking account of Propositions \ref{5} and \ref{6}, one can get
\begin{eqnarray*}
_a^{\mathbb{T}}D_t^\alpha\circ{_a^{\mathbb{T}}I_t^\alpha h(t)}=\bigg({_a^{\mathbb{T}}}I_t^{1-\alpha} ({_a^{\mathbb{T}}I_t^\alpha} (h(t))\bigg)^\Delta=\left({_a^{\mathbb{T}}}I_th(t)\right)^\Delta=h.
\end{eqnarray*}
The proof is complete.
\end{proof}

\begin{corollary}
For $0<\alpha\leq 1$, we have
$
_a^{\mathbb{T}}D_t^\alpha\circ{_a^{\mathbb{T}}D_t^{-\alpha}}=Id$ and $_a^{\mathbb{T}}I_t^{-\alpha}\circ{_a^{\mathbb{T}}I_t^\alpha}=Id,
$
where $Id$ denotes the identity operator.
\end{corollary}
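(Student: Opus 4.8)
The plan is to read off both identities directly from Proposition \ref{173}, once the operators of negative order are interpreted by the usual Riemann$-$Liouville convention. Recall that in Riemann$-$Liouville calculus a fractional integral of negative order is declared to be a fractional derivative of the corresponding positive order, and conversely; on the present time scale this amounts to setting $_a^{\mathbb{T}}D_t^{-\alpha}:={_a^{\mathbb{T}}I_t^{\alpha}}$ and $_a^{\mathbb{T}}I_t^{-\alpha}:={_a^{\mathbb{T}}D_t^{\alpha}}$ for $0<\alpha\leq1$. With these identifications in place, neither identity requires a new computation; each is merely a relabelling of the composition ${_a^{\mathbb{T}}D_t^{\alpha}}\circ{_a^{\mathbb{T}}I_t^{\alpha}}=h$ already established.

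Concretely, for the first identity I would substitute the convention $_a^{\mathbb{T}}D_t^{-\alpha}={_a^{\mathbb{T}}I_t^{\alpha}}$ to obtain
\[
{_a^{\mathbb{T}}D_t^{\alpha}}\circ{_a^{\mathbb{T}}D_t^{-\alpha}}={_a^{\mathbb{T}}D_t^{\alpha}}\circ{_a^{\mathbb{T}}I_t^{\alpha}}=Id,
\]
where the last equality is exactly Proposition \ref{173}. For the second identity I would instead use $_a^{\mathbb{T}}I_t^{-\alpha}={_a^{\mathbb{T}}D_t^{\alpha}}$, which yields
\[
{_a^{\mathbb{T}}I_t^{-\alpha}}\circ{_a^{\mathbb{T}}I_t^{\alpha}}={_a^{\mathbb{T}}D_t^{\alpha}}\circ{_a^{\mathbb{T}}I_t^{\alpha}}=Id,
\]
again by Proposition \ref{173}. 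Thus both compositions collapse onto the single left-inverse relation $D^{\alpha}I^{\alpha}=Id$.

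There is no genuine analytic obstacle here: the entire content of the corollary is already carried by Proposition \ref{173}, which in turn rests on the semigroup law of Proposition \ref{6} together with the representation $_a^{\mathbb{T}}D_t^{\alpha}=\Delta\circ{_a^{\mathbb{T}}I_t^{1-\alpha}}$ of Proposition \ref{5}. The only point demanding care is a bookkeeping one, namely stating unambiguously that the negative-order symbols $D_t^{-\alpha}$ and $I_t^{-\alpha}$ denote $I_t^{\alpha}$ and $D_t^{\alpha}$ respectively, so that the two displayed reductions are legitimate rather than circular. Accordingly, I would make this convention explicit at the outset of the proof and then invoke Proposition \ref{173} twice.
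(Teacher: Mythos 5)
Your proposal is correct and follows essentially the same route as the paper: the paper's proof likewise rewrites $_a^{\mathbb{T}}D_t^{-\alpha}$ as $_a^{\mathbb{T}}I_t^{\alpha}$ and $_a^{\mathbb{T}}I_t^{-\alpha}$ as $_a^{\mathbb{T}}D_t^{\alpha}$ and then invokes Proposition \ref{173} for both identities. Your explicit flagging of the negative-order convention is a minor presentational improvement over the paper, which uses it tacitly.
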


\begin{proof}
In view of Proposition \ref{173}, we have
\begin{eqnarray*}
{_a^{\mathbb{T}}D_t^\alpha}\circ{_a^{\mathbb{T}}D_t^{-\alpha}}
={_a^{\mathbb{T}}D_t^\alpha}\circ{_a^{\mathbb{T}}I_t^{\alpha}}=Id\quad \mathrm{and} \quad{_a^{\mathbb{T}}I_t^{-\alpha}}\circ{_a^{\mathbb{T}}I_t^\alpha}
={_a^{\mathbb{T}}D_t^{\alpha}}\circ{_a^{\mathbb{T}}I_t^\alpha}=Id.
\end{eqnarray*}
The proof is complete.
\end{proof}

\begin{theorem}\label{thm21}
Let $f\in C(J)$ and $\alpha>0$, then  $f\in {_a^{\mathbb{T}}}I_t^\alpha(J)$ iff
\begin{eqnarray}\label{t3.1}
_a^{\mathbb{T}}I_t^{1-\alpha}f\in C^1(J)
\end{eqnarray}
and
\begin{eqnarray}\label{t3.2}
\bigg({_a^{\mathbb{T}}}I_t^{1-\alpha}f(t)\bigg)\bigg\vert_{t=a}=0,
\end{eqnarray}
where ${_a^{\mathbb{T}}}I_t^\alpha(J)$ denotes the space of functions that can be represented by the left Riemann$-$Liouville $\Delta$-integral of order $\alpha$ of a $C(J)-$function.
\end{theorem}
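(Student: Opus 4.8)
The plan is to recognise this as the time-scale analogue of the classical integral-representation theorem for Riemann--Liouville fractional integrals, and to reduce both implications to three tools already established: the semigroup law (Proposition \ref{6}), the left-inverse identity (Proposition \ref{173}), and the fundamental theorem of $\Delta$-calculus (Theorem \ref{8}). Throughout I work in the range in which ${_a^{\mathbb{T}}}I_t^{1-\alpha}$ is itself a genuine fractional integral, so that Definition \ref{3} and the semigroup property apply with the exponents $\alpha$ and $1-\alpha$.

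First I would prove necessity ($\Rightarrow$). Assume $f={_a^{\mathbb{T}}}I_t^\alpha g$ with $g\in C(J)$. Applying ${_a^{\mathbb{T}}}I_t^{1-\alpha}$ and using the semigroup identity ${_a^{\mathbb{T}}}I_t^{1-\alpha}\circ{_a^{\mathbb{T}}}I_t^\alpha={_a^{\mathbb{T}}}I_t^1$ from Proposition \ref{6}, I obtain
\begin{equation*}
{_a^{\mathbb{T}}}I_t^{1-\alpha}f(t)={_a^{\mathbb{T}}}I_t^1 g(t)=\int_a^t g(s)\,\Delta s .
\end{equation*}
Since $g$ is continuous, this antiderivative is continuous with rd-continuous $\Delta$-derivative $g$, so it belongs to $C^1(J)$, yielding (\ref{t3.1}); evaluating at $t=a$ makes the integral vanish, yielding (\ref{t3.2}).

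For sufficiency ($\Leftarrow$), I set $g:=\big({_a^{\mathbb{T}}}I_t^{1-\alpha}f\big)^\Delta={_a^{\mathbb{T}}}D_t^\alpha f$; by hypothesis (\ref{t3.1}) this $g$ is continuous, hence $g\in C(J)$. Because ${_a^{\mathbb{T}}}I_t^{1-\alpha}f\in C^1(J)$ and vanishes at $a$ by (\ref{t3.2}), Theorem \ref{8} gives ${_a^{\mathbb{T}}}I_t^{1-\alpha}f(t)=\int_a^t g(s)\,\Delta s={_a^{\mathbb{T}}}I_t^1 g(t)$. Rewriting ${_a^{\mathbb{T}}}I_t^1={_a^{\mathbb{T}}}I_t^{1-\alpha}\circ{_a^{\mathbb{T}}}I_t^\alpha$ via Proposition \ref{6} and subtracting produces ${_a^{\mathbb{T}}}I_t^{1-\alpha}\big(f-{_a^{\mathbb{T}}}I_t^\alpha g\big)=0$. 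Applying the operator ${_a^{\mathbb{T}}}D_t^{1-\alpha}$ and using the left-inverse identity ${_a^{\mathbb{T}}}D_t^{1-\alpha}\circ{_a^{\mathbb{T}}}I_t^{1-\alpha}=Id$ from Proposition \ref{173} collapses the left side to $f-{_a^{\mathbb{T}}}I_t^\alpha g$ and the right side to $0$, so $f={_a^{\mathbb{T}}}I_t^\alpha g\in{_a^{\mathbb{T}}}I_t^\alpha(J)$.

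The delicate point is the final cancellation. Proposition \ref{173} furnishes only a \emph{left} inverse ($D\circ I=Id$), so I must arrange the argument so that the fractional derivative is applied to a function that is genuinely a fractional integral ${_a^{\mathbb{T}}}I_t^{1-\alpha}(\cdot)$ of an integrable function; this is precisely why I first reduce to the identity ${_a^{\mathbb{T}}}I_t^{1-\alpha}\big(f-{_a^{\mathbb{T}}}I_t^\alpha g\big)=0$ rather than attempting to differentiate $f$ directly. The other step to watch is the passage ${_a^{\mathbb{T}}}I_t^1 g(t)=\int_a^t g(s)\,\Delta s$ combined with Theorem \ref{8}, which genuinely consumes the boundary condition (\ref{t3.2}): without $\big({_a^{\mathbb{T}}}I_t^{1-\alpha}f\big)(a)=0$ the antiderivative representation would acquire an additive constant and the equivalence would break down.
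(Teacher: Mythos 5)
Your proof is correct and follows essentially the same route as the paper's: necessity via the semigroup law of Proposition \ref{6}, and sufficiency by writing ${_a^{\mathbb{T}}}I_t^{1-\alpha}f$ as an antiderivative of its $\Delta$-derivative (the paper invokes Taylor's formula, you invoke Theorem \ref{8}; both steps consume the boundary condition (\ref{t3.2})) and then using the semigroup law to reach ${_a^{\mathbb{T}}}I_t^{1-\alpha}\big(f-{_a^{\mathbb{T}}}I_t^\alpha g\big)\equiv 0$. The only divergence is the final cancellation: the paper concludes by citing the uniqueness of solutions to Abel's integral equation, whereas you obtain the same injectivity of ${_a^{\mathbb{T}}}I_t^{1-\alpha}$ from the left-inverse identity of Proposition \ref{173}, which is marginally more self-contained but mathematically equivalent.
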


\begin{proof}
Suppose $f\in{_a^{\mathbb{T}}I_t^\alpha}(J)$, $f(t)={^{\mathbb{T}}_aI_t^\alpha} g(t)$ for some $g\in C(J)$, and
\begin{eqnarray*}
_a^{\mathbb{T}}I_t^{1-\alpha}(f(t))={_a^{\mathbb{T}}I_t^{1-\alpha}}(^{\mathbb{T}}_aI_t^\alpha g(t)).
\end{eqnarray*}
In view of Proposition \ref{6}, one gets
\begin{eqnarray*}
_a^{\mathbb{T}}I_t^{1-\alpha}(f(t))={_a^{\mathbb{T}}I_t}g(t)=\int_a^tg(s)\Delta s.
\end{eqnarray*}
As a result, $_a^{\mathbb{T}}I_t^{1-\alpha}f\in C(J)$ and
\begin{eqnarray*}
\bigg({_a^{\mathbb{T}}}I_t^{1-\alpha}f(t)\bigg)\bigg\vert_{t=a}=\int_a^ag(s)\Delta s=0.
\end{eqnarray*}
Inversely, suppose that $f\in C(J)$ satisfies (\ref{t3.1}) and (\ref{t3.2}). Then, by applying Taylor's formula  to function $_a^{\mathbb{T}}I_t^{1-\alpha}f$, we obtain
\begin{eqnarray*}
_a^{\mathbb{T}}I_t^{1-\alpha}f(t)=\int_a^t\frac{\Delta}{\Delta s}{_a^{\mathbb{T}}I_s^{1-\alpha}}f(s)\Delta s,\quad\forall t\in J.
\end{eqnarray*}
Let $\varphi(t)=\frac{\Delta}{\Delta t}{_a^{\mathbb{T}}I_t^{1-\alpha}}f(t)$. Note that $\varphi\in C(J)$ by (\ref{t3.1}). Now by Proposition \ref{6}, one sees that
\begin{eqnarray*}
_a^{\mathbb{T}}I_t^{1-\alpha}(f(t))={_a^{\mathbb{T}}I_t^1}\varphi(t)
={_a^{\mathbb{T}}I_t^{1-\alpha}}[_a^{\mathbb{T}}I_t^{\alpha}\varphi(t)]
\end{eqnarray*}
and hence
\begin{eqnarray*}
_a^{\mathbb{T}}I_t^{1-\alpha}(f(t))-{_a^{\mathbb{T}}I_t^{1-\alpha}}[_a^{\mathbb{T}}
I_t^{\alpha}\varphi(t)]\equiv0.
\end{eqnarray*}
Therefore, we have
\begin{eqnarray*}
_a^{\mathbb{T}}I_t^{1-\alpha}[f(t)-{_a^{\mathbb{T}}
I_t^{\alpha}}\varphi(t)]\equiv0.
\end{eqnarray*}
From the uniqueness of solution to Abel's integral equation (\cite{L1}), this implies that $f-{_a^{\mathbb{T}}I_t^{\alpha}}\varphi\equiv0$. Hence, $f={_a^{\mathbb{T}}I_t^{\alpha}}\varphi$ and $f\in{_a^{\mathbb{T}}I_t^{\alpha}}(J)$. The proof is complete.
\end{proof}

\begin{theorem}\label{7}
Let $\alpha>0$ and $f\in C(J)$ satisfy the condition in Theorem \ref{thm21}. Then,
\begin{eqnarray*}
(_a^{\mathbb{T}}I_t^\alpha\circ{_a^{\mathbb{T}}D_t^\alpha})(f)=f.
\end{eqnarray*}
\end{theorem}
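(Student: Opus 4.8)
The plan is to recognize this as a right-inverse statement that follows almost immediately from the characterization theorem just proved together with the left-inverse identity from Proposition \ref{173}. The whole point of imposing "the condition in Theorem \ref{thm21}" is to guarantee that $f$ actually lies in the range of the fractional integral operator, which is precisely what lets the composition collapse.

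First I would invoke Theorem \ref{thm21} directly: since $f\in C(J)$ satisfies (\ref{t3.1}) and (\ref{t3.2}), the theorem tells us that $f\in {_a^{\mathbb{T}}}I_t^\alpha(J)$, so there exists some $g\in C(J)$ with
\begin{eqnarray*}
f(t)={_a^{\mathbb{T}}I_t^\alpha}g(t).
\end{eqnarray*}
This is the key reduction, and it is the only place where the hypotheses (\ref{t3.1})--(\ref{t3.2}) are used. Note that $g\in C(J)$ is in particular $\Delta$-integrable on $J$, so it is a legitimate input for the operators in Propositions \ref{173} and \ref{6}.

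Next I would apply the left Riemann$-$Liouville derivative to this representation and use the left-inverse identity ${_a^{\mathbb{T}}}D_t^\alpha\circ{_a^{\mathbb{T}}I_t^\alpha}=\mathrm{Id}$ established in Proposition \ref{173}. This yields
\begin{eqnarray*}
{_a^{\mathbb{T}}D_t^\alpha}f(t)={_a^{\mathbb{T}}D_t^\alpha}\left({_a^{\mathbb{T}}I_t^\alpha}g(t)\right)=g(t).
\end{eqnarray*}
Finally, applying ${_a^{\mathbb{T}}}I_t^\alpha$ to both sides and substituting back the representation of $f$ gives
\begin{eqnarray*}
\left({_a^{\mathbb{T}}I_t^\alpha}\circ{_a^{\mathbb{T}}D_t^\alpha}\right)(f)(t)={_a^{\mathbb{T}}I_t^\alpha}g(t)=f(t),
\end{eqnarray*}
which is the desired identity.

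Honestly, there is no serious obstacle here once Theorem \ref{thm21} and Proposition \ref{173} are in hand; the difficulty has been front-loaded into proving the characterization theorem (via the uniqueness of solutions to Abel's integral equation) and the semigroup/left-inverse properties. The only point requiring a moment of care is verifying that the function $g$ produced by Theorem \ref{thm21} is regular enough ($C(J)$, hence integrable) for Proposition \ref{173} to apply to it — but this is guaranteed by the statement of Theorem \ref{thm21}, so the argument is essentially a three-line composition of known facts.
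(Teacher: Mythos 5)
Your argument is correct and is essentially identical to the paper's own proof: both invoke Theorem \ref{thm21} to write $f={_a^{\mathbb{T}}I_t^{\alpha}}\varphi$ for some continuous $\varphi$ and then collapse the composition using the left-inverse identity ${_a^{\mathbb{T}}}D_t^\alpha\circ{_a^{\mathbb{T}}}I_t^\alpha=\mathrm{Id}$ from Proposition \ref{173}. You merely spell out the intermediate step ${_a^{\mathbb{T}}}D_t^\alpha f=\varphi$ that the paper leaves implicit.
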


\begin{proof}
Combining with Theorem \ref{thm21} and Proposition \ref{173}, we can see that
\begin{eqnarray*}
_a^{\mathbb{T}}I_t^{\alpha}\circ{_a^{\mathbb{T}}D_t^{\alpha}}f(t)
={_a^{\mathbb{T}}I_t^{\alpha}}\circ{_a^{\mathbb{T}}D_t^{\alpha}}
(_a^{\mathbb{T}}I_t^{\alpha}\varphi(t))={_a^{\mathbb{T}}I_t^{\alpha}}\varphi(t)=f(t).
\end{eqnarray*}
The proof is complete.
\end{proof}

\begin{theorem}\label{12}
Let $\alpha>0$, $p,q\geq1$, and $\frac{1}{p}+\frac{1}{q}\leq 1+\alpha$, where $p\neq 1$ and $q\neq 1$ in the case when $\frac{1}{p}+\frac{1}{q}=1+\alpha$. Moreover, let
\begin{eqnarray*}
_a^\mathbb{T}I_t^\alpha(L^p):=\bigg\{f:f={_a^\mathbb{T}}I_t^\alpha g,g\in L^p(J)\bigg\}
\end{eqnarray*}
and
\begin{eqnarray*}
_t^\mathbb{T}I_b^\alpha(L^p):=\bigg\{f:f={_t^\mathbb{T}}I_b^\alpha g,g\in L^p(J)\bigg\},
\end{eqnarray*}
then the following integration by parts formulas hold.
\begin{itemize}
  \item [$(a)$]
  If $\varphi\in L^p(J)$ and $\psi\in L^q(J)$, then
  \begin{eqnarray*}
  \int_{J^0}\varphi(t)\bigg({_a^\mathbb{T}}I_t^\alpha\psi\bigg)(t)\Delta t=\int_{J^0}\psi(t)\bigg({_t^\mathbb{T}}I_b^\alpha\varphi\bigg)(t)\Delta t.
  \end{eqnarray*}
  \item [$(b)$]
  If $g\in {_t^\mathbb{T}}I_b^\alpha(L^p)$ and $f\in {_a^\mathbb{T}}I_t^\alpha (L^q)$, then
  \begin{eqnarray*}
  \int_{J^0}g(t)\bigg({_a^\mathbb{T}}D_t^\alpha f\bigg)(t)\Delta t=\int_{J^0}f(t)\bigg({_t^\mathbb{T}}D_b^\alpha g\bigg)(t)\Delta t.
  \end{eqnarray*}
\item [$(c)$]
  For Caputo fractional derivatives, if $g\in {_t^\mathbb{T}}I_b^\alpha(L^p)$ and $f\in {_a^\mathbb{T}}I_t^\alpha (L^q)$, then
  \begin{eqnarray*}
  \int_a^bg(t)\bigg({_a^{\mathbb{T}\,C}}D_t^\alpha f\bigg)(t)\Delta t=\left[{_t^\mathbb{T}}I_b^{1-\alpha}g(t)\cdot f(t)\right]\bigg\vert_{t=a}^b+\int_a^bf(\sigma(t))\bigg({_t^\mathbb{T}}D_b^\alpha g\bigg)(t)\Delta t.
  \end{eqnarray*}
  and
  \begin{eqnarray*}
  \int_a^bg(t)\bigg({_t^{\mathbb{T}\,C}}D_b^\alpha f\bigg)(t)\Delta t=\left[{_a^\mathbb{T}}I_t^{1-\alpha}g(t)\cdot f(t)\right]\bigg\vert_{t=a}^b+\int_a^bf(\sigma(t))\bigg({_a^\mathbb{T}}D_t^\alpha g\bigg)(t)\Delta t.
  \end{eqnarray*}
\end{itemize}
\end{theorem}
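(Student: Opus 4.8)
The plan is to prove the three parts in order, with part (a) serving as the engine for parts (b) and (c). For part (a) I would first write both fractional integrals explicitly using Definition \ref{3}, so that the left-hand side becomes the iterated $\Delta$-integral
\[
\frac{1}{\Gamma(\alpha)}\int_a^b \varphi(t)\int_a^t (t-\sigma(s))^{\alpha-1}\psi(s)\,\Delta s\,\Delta t,
\]
an integral over the triangular region $\{(s,t):a\le s\le t\le b\}$ of $\mathbb{T}\times\mathbb{T}$. The heart of the argument is a Fubini-type interchange of the order of $\Delta$-integration, after which the inner integral $\int_s^b (t-\sigma(s))^{\alpha-1}\varphi(t)\,\Delta t$ is recognized, via the definition of the right integral, as $\Gamma(\alpha)\,{_s^\mathbb{T}}I_b^\alpha\varphi(s)$; relabelling the dummy variable then yields the claimed identity. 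The conditions $p,q\ge 1$ and $\tfrac1p+\tfrac1q\le 1+\alpha$ are precisely what guarantee that the double integral is absolutely convergent, which I would verify by combining H\"older's inequality (Proposition \ref{17}) with the $L^p$-boundedness of the fractional integration operator (via Theorem \ref{16}, transported to the time scale through the comparison with real integrals in Proposition \ref{2}), so that the interchange is legitimate.

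Once (a) is established, part (b) follows formally from the representation of the spaces together with the inversion identities already proved. I would write $f={_a^\mathbb{T}}I_t^\alpha F$ with $F\in L^q(J)$ and $g={_t^\mathbb{T}}I_b^\alpha G$ with $G\in L^p(J)$; then Proposition \ref{173} and its right-sided analogue give ${_a^\mathbb{T}}D_t^\alpha f=F$ and ${_t^\mathbb{T}}D_b^\alpha g=G$. Substituting these into the left-hand side turns it into $\int_{J^0}({_t^\mathbb{T}}I_b^\alpha G)\,F\,\Delta t$, to which part (a) applies directly (with $\psi=F$, $\varphi=G$), producing $\int_{J^0}G\,({_a^\mathbb{T}}I_t^\alpha F)\,\Delta t=\int_{J^0}f\,({_t^\mathbb{T}}D_b^\alpha g)\,\Delta t$, as desired.

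For part (c) I would combine part (a) at order $1-\alpha$ with the time-scale integration-by-parts formula of Theorem \ref{ft3}. Starting from the definition of the Caputo derivative, ${_a^{\mathbb{T}\,C}}D_t^\alpha f={_a^\mathbb{T}}I_t^{1-\alpha}f^\Delta$, part (a) with $\varphi=g$, $\psi=f^\Delta$ moves the fractional integral onto $g$, giving $\int_a^b f^\Delta\,({_t^\mathbb{T}}I_b^{1-\alpha}g)\,\Delta t$. Writing $u:={_t^\mathbb{T}}I_b^{1-\alpha}g$ and applying Theorem \ref{ft3} produces the boundary term $[u\cdot f]_a^b$ together with $-\int_a^b f^\sigma u^\Delta\,\Delta t$; since $u^\Delta=({_t^\mathbb{T}}I_b^{1-\alpha}g)^\Delta=-{_t^\mathbb{T}}D_b^\alpha g$ by Definition \ref{4}, this last term becomes $+\int_a^b f(\sigma(t))\,({_t^\mathbb{T}}D_b^\alpha g)\,\Delta t$, which is exactly the asserted formula. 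The second identity in (c) follows from the same computation with the roles of the left and right operators interchanged.

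I expect the genuine difficulty to lie entirely in part (a), specifically in justifying the interchange of the two $\Delta$-integrals on a general time scale: one must set up a Fubini theorem for double $\Delta$-integrals over the triangular domain, track the forward-jump shift $\sigma(s)$ inside the singular kernel $(t-\sigma(s))^{\alpha-1}$ when the region $\{a\le s\le t\}$ is rewritten as $\{s\le t\le b\}$, and control the kernel near the diagonal $s=t$ where it is singular for $\alpha<1$. The hypothesis $\tfrac1p+\tfrac1q\le 1+\alpha$ (with the strict requirement on $p,q$ at equality) is what secures the absolute integrability needed for Fubini, and parts (b) and (c) are then essentially formal consequences of (a) and the operator identities.
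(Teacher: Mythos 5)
Your parts (a) and (c) follow essentially the same route as the paper: (a) is proved by writing out the kernel, applying Fubini's theorem on time scales over the triangle, and relabelling; (c) is obtained by first moving the fractional integral of order $1-\alpha$ onto $g$ (the paper does this by an explicit Fubini rather than by invoking (a), but it is the same computation) and then applying the time-scale integration-by-parts formula of Theorem \ref{ft3}, with the sign recovered from Definition \ref{4}. Where you genuinely diverge is part (b). The paper applies Fubini directly to the expression $\int_a^b g(t)\bigl(\frac{1}{\Gamma(1-\alpha)}(\int_a^t(t-\sigma(s))^{-\alpha}f(s)\Delta s)^\Delta\bigr)\Delta t$, i.e.\ it interchanges the order of integration while the delta derivative is still sitting outside the inner integral; this is a formal manipulation that never uses the hypotheses $f\in{_a^\mathbb{T}}I_t^\alpha(L^q)$, $g\in{_t^\mathbb{T}}I_b^\alpha(L^p)$ and whose justification (commuting Fubini with $\Delta$-differentiation) is left implicit. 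You instead write $f={_a^\mathbb{T}}I_t^\alpha F$, $g={_t^\mathbb{T}}I_b^\alpha G$, use Proposition \ref{173} (and its right-sided analogue, which you should note is not explicitly stated in the paper) to identify ${_a^\mathbb{T}}D_t^\alpha f=F$ and ${_t^\mathbb{T}}D_b^\alpha g=G$, and reduce (b) to an application of (a) with $\varphi=G$, $\psi=F$. This is the classical Samko--Kilbas--Marichev route; it is cleaner, actually exploits the membership hypotheses, and avoids the delicate derivative-under-Fubini step, at the modest cost of needing the right-sided inversion identity. Both arguments share the same remaining burden, namely the rigorous justification of Fubini for the singular kernel on a general time scale, which you correctly flag as the real difficulty and which the paper also takes for granted.
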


\begin{proof}
\begin{itemize}
  \item [$(a)$]
It follows from Definition \ref{3} and Fubini's theorem on time scales that
{\setlength\arraycolsep{2pt}
\begin{eqnarray*}
&&\int_a^b\varphi(t)\bigg({_a^\mathbb{T}}I_t^\alpha\psi\bigg)(t)\Delta t\nonumber\\
&=&\int_a^b\varphi(t)\left(\int_a^t\frac{(t-\sigma(s))^{\alpha-1}}{\Gamma(\alpha)}\psi(s)\Delta s\right)\Delta t\nonumber\\
&=&\int_a^b\psi(s)\int_s^b\frac{(t-\sigma(s))^{\alpha-1}}{\Gamma(\alpha)}\varphi(t)\Delta t\Delta s\nonumber\\
&=&\int_a^b\psi(t)\int_t^b\frac{(s-\sigma(t))^{\alpha-1}}{\Gamma(\alpha)}\varphi(s)\Delta s\Delta t\nonumber\\
&=&\int_a^b\psi(t)\bigg({_t^\mathbb{T}}I_b^\alpha\varphi\bigg)(t)\Delta t.
\end{eqnarray*}}
The proof is complete.
\item [$(b)$]
It follows from Definition \ref{4} and Fubini's theorem on time scales that
{\setlength\arraycolsep{2pt}
\begin{eqnarray*}
&&\int_a^bg(t)\bigg({_a^\mathbb{T}}D_t^\alpha f\bigg)(t)\Delta t\nonumber\\
&=&\int_a^bg(t)\left(\frac{1}{\Gamma(1-\alpha)}\bigg(\int_a^t(t-\sigma(s))^{-\alpha}f(s)\Delta s\bigg)^\Delta\right)\Delta t\nonumber\\
&=&\int_a^bf(s)\left(\frac{1}{\Gamma(1-\alpha)}\bigg(\int_s^b(t-\sigma(s))^{-\alpha}g(t)\Delta t\bigg)^\Delta\right)\Delta s\nonumber\\
&=&\int_a^bf(t)\left(\frac{1}{\Gamma(1-\alpha)}\bigg(\int_t^b(s-\sigma(t))^{-\alpha}g(s)\Delta s\bigg)^\Delta\right)\Delta t\nonumber\\
&=&\int_a^bg(t)\bigg({_t^\mathbb{T}}D_b^\alpha f\bigg)(t)\Delta t.
\end{eqnarray*}}
The proof is complete.
\item [$(c)$]
It follows from Definition \ref{cd}, Fubini's theorem on time scales and Theorem \ref{ft3}  that
{\setlength\arraycolsep{2pt}
\begin{eqnarray*}
&&\int_a^bg(t)\bigg({_a^{\mathbb{T}\,C}}D_t^\alpha f\bigg)(t)\Delta t\nonumber\\
&=&\int_a^bg(t)\left(\frac{1}{\Gamma(1-\alpha)}\int_a^t(t-\sigma(s))^{-\alpha}f^\Delta(s)\Delta s\right)\Delta t\nonumber\\
&=&\int_a^bf^\Delta(s)\left(\frac{1}{\Gamma(1-\alpha)}\int_s^b(t-\sigma(s))^{-\alpha}g(t)\Delta t\right)\Delta s\nonumber\\
&=&\int_a^bf^\Delta(t)\left(\frac{1}{\Gamma(1-\alpha)}\int_t^b(s-\sigma(t))^{-\alpha}g(s)\Delta s\right)\Delta t\nonumber\\
\nonumber\\
&=&\left[{_t^\mathbb{T}}I_b^{1-\alpha}g(t)\cdot f(t)\right]\bigg\vert_{t=a}^b-\int_a^bf(\sigma(t))\left(\frac{1}{\Gamma(1-\alpha)}\int_t^b(s-\sigma(t))^{-\alpha}g(s)\Delta s\right)^\Delta\nonumber\\
&=&\left[{_t^\mathbb{T}}I_b^{1-\alpha}g(t)\cdot f(t)\right]\bigg\vert_{t=a}^b+\int_a^bf(\sigma(t))\left(\frac{-1}{\Gamma(1-\alpha)}\int_t^b(s-\sigma(t))^{-\alpha}g(s)\Delta s\right)^\Delta\nonumber\\
&=&\left[{_t^\mathbb{T}}I_b^{1-\alpha}g(t)\cdot f(t)\right]\bigg\vert_{t=a}^b+\int_a^bf(\sigma(t))\bigg({_t^\mathbb{T}}D_b^\alpha g\bigg)(t)\Delta t.
\end{eqnarray*}}
The second relation is obtained in a similar way. The proof is complete.
\end{itemize}
\end{proof}

\section{Fractional Sobolev spaces on time scales and their properties}
\setcounter{equation}{0}
\indent

In this section, we present and prove some lemmas, propositions and theorems, which are of utmost significance for our main results.

In the following, let $0<a<b$.  Inspired by  Theorems \ref{8}$-$\ref{11}, we  give the following definition.
\begin{definition}
Let $0<\alpha\leq1$. By $AC_{\Delta,a^+}^{\alpha,1}(J,\mathbb{R}^N)$ we denote the set of all functions $f:J\rightarrow\mathbb{R}^N$ that have the representation
\begin{eqnarray}\label{22}
f(t)=\frac{1}{\Gamma(\alpha)}\frac{c}{(t-a)^{1-\alpha}}+\,_a^\mathbb{T}I_t^\alpha\varphi(t),\quad t\in J\quad\Delta-a.e.
\end{eqnarray}
with $c\in\mathbb{R}^N$ and $\varphi\in L_\Delta^1$.
\end{definition}

Then, we have the following result.
\begin{theorem}\label{23}
Let $0<\alpha\leq1$ and $f\in L_\Delta^1$. Then function $f$ has the left Riemann$-$Liouville derivative $_a^\mathbb{T}D_t^\alpha f$ of order $\alpha$ on the interval $J$ iff $f\in AC_{\Delta,a^+}^{\alpha,1}(J,\mathbb{R}^N)$; that is, $f$ has the representation $(\ref{22})$. In such a case,
\begin{eqnarray*}
({_a^\mathbb{T}}I_t^{1-\alpha}f)(a)=c, \quad
({_a^\mathbb{T}}D_t^\alpha f)(t)=\varphi(t),\quad t\in J\quad\Delta-a.e.
\end{eqnarray*}
\end{theorem}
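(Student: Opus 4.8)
The plan is to transcribe onto the time scale the classical integral–representation argument behind Theorem \ref{11}, with the delta calculus replacing the ordinary one. The organizing idea is that applying $_a^{\mathbb{T}}I_t^{1-\alpha}$ collapses the fractional problem to a first–order one: by the semigroup law (Proposition \ref{6}) we have $_a^{\mathbb{T}}I_t^{1-\alpha}\circ{_a^{\mathbb{T}}I_t^{\alpha}}={_a^{\mathbb{T}}I_t^{1}}$, which is just delta–integration $\int_a^{\cdot}(\cdot)\Delta s$, and by Proposition \ref{5} the derivative $_a^{\mathbb{T}}D_t^\alpha$ is exactly $\Delta\circ{_a^{\mathbb{T}}I_t^{1-\alpha}}$. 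The whole statement will thus be reduced to the absolute–continuity characterization of Theorem \ref{9}, together with one genuinely fractional ingredient, the time–scale Abel identity $_a^{\mathbb{T}}I_t^{1-\alpha}\big[(t-a)^{\alpha-1}/\Gamma(\alpha)\big]\equiv 1$, and the uniqueness of solutions of Abel's integral equation already invoked in Theorem \ref{thm21}.

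For the direction asserting that existence of $_a^{\mathbb{T}}D_t^\alpha f$ forces the representation, I would first record that, by definition, $f$ possessing the left Riemann$-$Liouville derivative means $_a^{\mathbb{T}}I_t^{1-\alpha}f$ is absolutely continuous on $J$, so that $\varphi:={_a^{\mathbb{T}}D_t^\alpha f}=\big({_a^{\mathbb{T}}I_t^{1-\alpha}f}\big)^\Delta\in L_\Delta^1$ exists $\Delta$–a.e. Theorem \ref{9} then gives $_a^{\mathbb{T}}I_t^{1-\alpha}f(t)=c+\int_a^t\varphi(s)\Delta s$ with $c:={_a^{\mathbb{T}}I_t^{1-\alpha}f(a)}$. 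Rewriting the integral as $_a^{\mathbb{T}}I_t^{1}\varphi={_a^{\mathbb{T}}I_t^{1-\alpha}}\big({_a^{\mathbb{T}}I_t^{\alpha}}\varphi\big)$ via Proposition \ref{6}, I obtain $_a^{\mathbb{T}}I_t^{1-\alpha}\big(f-{_a^{\mathbb{T}}I_t^{\alpha}}\varphi\big)\equiv c$. Feeding in the Abel identity to exhibit $\frac{c}{\Gamma(\alpha)}(t-a)^{\alpha-1}$ as one solution and invoking uniqueness for Abel's equation (\cite{L1}) yields $f-{_a^{\mathbb{T}}I_t^{\alpha}}\varphi=\frac{c}{\Gamma(\alpha)}(t-a)^{\alpha-1}$, which is precisely $(\ref{22})$; the two displayed formulas for $c$ and $\varphi$ are read off along the way.

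For the converse I would start from $(\ref{22})$ and apply $_a^{\mathbb{T}}I_t^{1-\alpha}$ to both terms. The Abel identity disposes of the singular term, giving $_a^{\mathbb{T}}I_t^{1-\alpha}\big[\frac{c}{\Gamma(\alpha)}(t-a)^{\alpha-1}\big]\equiv c$, while the semigroup law turns the second term into $\int_a^t\varphi(s)\Delta s$. Hence $_a^{\mathbb{T}}I_t^{1-\alpha}f(t)=c+\int_a^t\varphi(s)\Delta s$, a function absolutely continuous on $J$; by Theorem \ref{9} it is $\Delta$–differentiable $\Delta$–a.e. with $\Delta$–derivative $\varphi$. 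Proposition \ref{5} then shows $_a^{\mathbb{T}}D_t^\alpha f=\big({_a^{\mathbb{T}}I_t^{1-\alpha}f}\big)^\Delta=\varphi$ exists, and evaluating the representation of $_a^{\mathbb{T}}I_t^{1-\alpha}f$ at $t=a$ gives $_a^{\mathbb{T}}I_t^{1-\alpha}f(a)=c$, completing this implication.

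I expect the decisive technical point to be the Abel/Beta identity $_a^{\mathbb{T}}I_t^{1-\alpha}[(t-a)^{\alpha-1}]=\Gamma(\alpha)$ on a general time scale. Over $\mathbb{R}$ this is the elementary Beta integral, but in the delta setting the ordinary power $(t-a)^{\alpha-1}$ is not the generalized Taylor monomial, so the equality is not automatic and must be secured either by a direct $\Delta$–convolution computation or through the Laplace–transform equivalence of Theorem \ref{t4'} (matching $1/z^{1-\alpha}\cdot 1/z^{\alpha}=1/z$). The remaining work is bookkeeping: confirming $\varphi\in L_\Delta^1$ so that Theorem \ref{9} applies, and that all identities are understood $\Delta$–a.e. rather than pointwise.
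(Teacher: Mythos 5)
Your argument is correct and rests on the same skeleton as the paper's proof: both directions funnel through the semigroup law of Proposition \ref{6} and the absolute--continuity characterizations of Theorems \ref{8}--\ref{10}, and your converse implication is essentially identical to the paper's. The genuine divergence is in the forward direction. The paper applies ${_a^\mathbb{T}}I_t^{\alpha}$ to the identity $({_a^\mathbb{T}}I_t^{1-\alpha}f)(t)=c+({_a^\mathbb{T}}I_t^{1}\varphi)(t)$ to get $(\ref{25})$ and then delta--differentiates, which tacitly requires $\bigl({_a^\mathbb{T}}I_t^{\alpha}c\bigr)^\Delta=\frac{c}{\Gamma(\alpha)}(t-a)^{\alpha-1}$; you instead factor ${_a^\mathbb{T}}I_t^{1}\varphi={_a^\mathbb{T}}I_t^{1-\alpha}\bigl({_a^\mathbb{T}}I_t^{\alpha}\varphi\bigr)$ and close by injectivity of the Abel operator, which requires ${_a^\mathbb{T}}I_t^{1-\alpha}\bigl[(t-a)^{\alpha-1}/\Gamma(\alpha)\bigr]\equiv 1$. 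These two unproved facts are the same Beta--function identity in two guises, so neither route is harder than the other; yours has the advantage of reusing the Abel--uniqueness tool already invoked in Theorem \ref{thm21}, and of stating explicitly what the paper leaves silent, namely that on a general time scale the ordinary power $(t-a)^{\alpha-1}$ is not the generalized monomial and the identity $\int_a^t(t-\sigma(s))^{-\alpha}(s-a)^{\alpha-1}\Delta s=\Gamma(\alpha)\Gamma(1-\alpha)$ does not come for free (indeed on a discrete time scale the integrand is already problematic at the atom $s=a$). So the one point you flag as needing verification is a real gap, but it is a gap shared with, not created relative to, the paper's own proof; modulo that identity your proposal is a complete and slightly more careful version of the same argument.
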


\begin{proof}
Let us assume that $f\in L_\Delta^1$ has a left$-$sided Riemann$-$Liouville derivative $_a^\mathbb{T}D_t^\alpha f$. This means that $_a^\mathbb{T}I_t^{1-\alpha} f$ is (identified to) an absolutely continuous function. From the integral representation of Theorems \ref{8} and \ref{10}, there exist  a constant $c\in\mathbb{R}^N$ and a function $\varphi\in L_\Delta^1$ such that
\begin{eqnarray}\label{24}
({_a^\mathbb{T}}I_t^{1-\alpha}f)(t)=c+\,(_a^\mathbb{T}I_t^1\varphi)(t),\quad t\in J,
\end{eqnarray}
with $({_a^\mathbb{T}}I_t^{1-\alpha}f)(a)=c$ and $\bigg(({_a^\mathbb{T}}I_t^{1-\alpha}f)(t)\bigg)^\Delta=\,{_a^\mathbb{T}}D_t^{\alpha}f(t)=\varphi(t)$, $t\in J\quad\Delta-a.e.$.

By Proposition \ref{6} and applying $_a^\mathbb{T}I_t^\alpha$ to $(\ref{24})$ we obtain
\begin{eqnarray}\label{25}
({_a^\mathbb{T}}I_t^{1}f)(t)=({_a^\mathbb{T}}I_t^{\alpha}c)(t)
+(_a^\mathbb{T}I_t^1{_a^\mathbb{T}}I_t^\alpha\varphi)(t),\quad t\in J\quad\Delta-a.e..
\end{eqnarray}
The result follows from the $\Delta-$differentiability of $(\ref{25})$.

Conversely, let us assume that $(\ref{22})$ holds true. From Proposition \ref{6} and applying ${_a^\mathbb{T}}I_t^{1-\alpha}$ to $(\ref{22})$ we obtain
\begin{eqnarray*}
({_a^\mathbb{T}}I_t^{1-\alpha}f)(t)=c+
({_a^\mathbb{T}}I_t^1 \varphi)(t),\quad t\in J\quad\Delta-a.e.
\end{eqnarray*}
and then, ${_a^\mathbb{T}}I_t^{1-\alpha}f$ has an absolutely continuous representation.  Further, $f$ has a left$-$sided Riemann$-$Liouville derivative ${_a^\mathbb{T}}D_t^{\alpha}f$. This completes the proof.
\end{proof}

\begin{remark}\label{re31}
\begin{itemize}
  \item [$(i)$]
  By $AC_{\Delta,a^+}^{\alpha,p}$ $(1\leq p<\infty)$ we denote the set of all functions $f:J\rightarrow\mathbb{R}^N$ possessing representation $(\ref{22})$ with $c\in\mathbb{R}^N$ and $\varphi\in L_\Delta^p$.
  \item [$(ii)$]
  It is easy to see that  Theorem \ref{23} implies that for any $1\leq p<\infty$, $f$ has the left Riemann$-$Liouville derivative ${_a^\mathbb{T}}D_t^{\alpha}f\in L_\Delta^p$ iff $f\in AC_{\Delta,a^+}^{\alpha,p}$, that is, $f$ has the representation $(\ref{22})$ with $\varphi\in L_\Delta^p$.
\end{itemize}
\end{remark}

\begin{definition}\label{def1}
Let $0<\alpha\leq1$ and let $1\leq p<\infty$. By left Sobolev space of order $\alpha$ we will mean the set $W_{\Delta,a^+}^{\alpha,p}=W_{\Delta,a^+}^{\alpha,p}(J,\mathbb{R}^N)$ given by
\begin{eqnarray*}
W_{\Delta,a^+}^{\alpha,p}:=\bigg\{u\in L_\Delta^p;\,\exists\, g\in L_\Delta^p,\, \forall\varphi\in C_{c,rd}^\infty \,\,\mathrm{such} \,\,\mathrm{that} \int_{J^0}u(t)\cdot{_t^\mathbb{T}}D_b^{\alpha}\varphi(t)\Delta t=\int_{J^0}g(t)\cdot\varphi(t)\Delta t\bigg\}.
\end{eqnarray*}
\end{definition}

\begin{remark}
A function $g$ given in Definition \ref{def1} will be called the weak left fractional derivative of order $0<\alpha\leq1$ of $u$; let us denote it by $^\mathbb{T}u_{a^+}^\alpha$. The uniqueness of this weak derivative follows from \cite{7}.
\end{remark}

We have the following characterization of $W_{\Delta,a^+}^{\alpha,p}$.
\begin{theorem}\label{thm32}
If $0<\alpha\leq1$ and $1\leq p<\infty$, then
$
W_{\Delta,a^+}^{\alpha,p}=AC_{\Delta,a^+}^{\alpha,p}\cap L_\Delta^p.
$
\end{theorem}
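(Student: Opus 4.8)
The plan is to prove the two inclusions separately. The bridge in both directions is Theorem \ref{23}, which identifies membership in $AC_{\Delta,a^+}^{\alpha,p}$ with the existence of an $L_\Delta^p$ Riemann--Liouville derivative, together with the integration-by-parts formulas of Theorem \ref{12}, the semigroup property of Proposition \ref{6}, the inversion identity of Proposition \ref{173}, and the fundamental lemma (Lemma \ref{13}). Throughout I write $\psi$ for the test functions of Definition \ref{def1}, i.e. $\psi\in C_{c,rd}^\infty$, reserving $\varphi$ for the density in the representation $(\ref{22})$.

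For the inclusion $AC_{\Delta,a^+}^{\alpha,p}\cap L_\Delta^p\subseteq W_{\Delta,a^+}^{\alpha,p}$, I would take $u\in AC_{\Delta,a^+}^{\alpha,p}$, so that by Theorem \ref{23} and Remark \ref{re31} the function $\varphi:={_a^\mathbb{T}}D_t^\alpha u$ lies in $L_\Delta^p$ and $u=\tfrac{1}{\Gamma(\alpha)}\tfrac{c}{(t-a)^{1-\alpha}}+{_a^\mathbb{T}}I_t^\alpha\varphi$. Splitting $u=u_s+w$ with $w={_a^\mathbb{T}}I_t^\alpha\varphi\in{_a^\mathbb{T}}I_t^\alpha(L^p)$ and $u_s$ the singular term, I treat the pieces separately. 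For $w$ I apply Theorem \ref{12}(b) with $f=w$ and $g=\psi$ (a test function lies in ${_t^\mathbb{T}}I_b^\alpha(L^{p'})$ by the right-sided analogue of Theorem \ref{thm21}) together with ${_a^\mathbb{T}}D_t^\alpha w=\varphi$ from Proposition \ref{173}, obtaining $\int_{J^0}w\cdot{_t^\mathbb{T}}D_b^\alpha\psi\,\Delta t=\int_{J^0}\psi\cdot\varphi\,\Delta t$. For the singular term I must show $\int_{J^0}u_s\cdot{_t^\mathbb{T}}D_b^\alpha\psi\,\Delta t=0$; since ${_a^\mathbb{T}}I_t^{1-\alpha}u_s\equiv c$ (the $\varphi=0$ instance of Theorem \ref{23}) and $\psi$ is supported away from $a$ and $b$, this follows from Theorem \ref{12}(a) with order $1-\alpha$ and the time-scale integration by parts of Theorem \ref{ft3}, the boundary contributions dropping out. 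Adding the two pieces gives $u\in W_{\Delta,a^+}^{\alpha,p}$ with weak derivative $\varphi$.

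For the reverse inclusion, let $u\in W_{\Delta,a^+}^{\alpha,p}$ with weak derivative $g\in L_\Delta^p$. Put $v:={_a^\mathbb{T}}I_t^\alpha g\in{_a^\mathbb{T}}I_t^\alpha(L^p)\subseteq AC_{\Delta,a^+}^{\alpha,p}$; by the first inclusion $v\in W_{\Delta,a^+}^{\alpha,p}$ with the same weak derivative $g$, so by uniqueness of the weak derivative the difference $h:=u-v$ satisfies $\int_{J^0}h\cdot{_t^\mathbb{T}}D_b^\alpha\psi\,\Delta t=0$ for every $\psi\in C_{c,rd}^\infty$. It then suffices to prove the kernel characterization that such an $h$ agrees $\Delta$-a.e. with a singular term $\tfrac{1}{\Gamma(\alpha)}\tfrac{c}{(t-a)^{1-\alpha}}$, since then $u=v+h\in AC_{\Delta,a^+}^{\alpha,p}$. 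To establish this I would show ${_a^\mathbb{T}}I_t^{1-\alpha}h$ is $\Delta$-a.e. constant and invoke Theorem \ref{23} with $\varphi=0$. Testing against $\phi^\Delta$ for $\phi\in C_{c,rd}^\infty$ and using Theorem \ref{12}(a) with order $1-\alpha$ gives $\int_{J^0}({_a^\mathbb{T}}I_t^{1-\alpha}h)\cdot\phi^\Delta\,\Delta t=\int_{J^0}h\cdot{_t^\mathbb{T}}I_b^{1-\alpha}(\phi^\Delta)\,\Delta t=-\int_{J^0}h\cdot{_t^{\mathbb{T}\,C}}D_b^\alpha\phi\,\Delta t$, the last step being Definition \ref{cd}. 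As $\phi$ is compactly supported, $\phi(a)=\phi(b)=0$ and the right Caputo and right Riemann--Liouville derivatives coincide, ${_t^{\mathbb{T}\,C}}D_b^\alpha\phi={_t^\mathbb{T}}D_b^\alpha\phi$, so the right-hand side vanishes by hypothesis. Hence $\int_{J^0}({_a^\mathbb{T}}I_t^{1-\alpha}h)\cdot\phi^\Delta\,\Delta t=0$ for all compactly supported $\phi$, and the (du Bois--Reymond) version of Lemma \ref{13} forces ${_a^\mathbb{T}}I_t^{1-\alpha}h\equiv c$; Theorem \ref{23} then yields $h=\tfrac{1}{\Gamma(\alpha)}\tfrac{c}{(t-a)^{1-\alpha}}$.

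The genuinely delicate points, where I expect to spend most of the effort, are the vanishing of the singular-term pairing $\int_{J^0}u_s\cdot{_t^\mathbb{T}}D_b^\alpha\psi\,\Delta t=0$ and the identity ${_t^\mathbb{T}}I_b^{1-\alpha}(\phi^\Delta)=-{_t^{\mathbb{T}\,C}}D_b^\alpha\phi$ with the Caputo--Riemann--Liouville coincidence for compactly supported $\phi$. Both demand careful accounting of the boundary terms produced by time-scale integration by parts (Theorem \ref{ft3}) and control of the endpoint singularities of $(t-a)^{\alpha-1}$ and $(s-\sigma(t))^{-\alpha}$. A related subtlety is that Lemma \ref{13} is stated for $\phi\in C_{0,rd}^1(J^k)$, whereas the hypothesis on $h$ is only available against $C_{c,rd}^\infty$; reconciling these requires either the compactly supported variant of the fundamental lemma or an approximation argument, since for non-compactly-supported $\phi$ the Caputo--Riemann--Liouville coincidence fails and extra boundary terms appear. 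The interplay between the forward shift $\sigma$ and the fractional kernels in these Fubini manipulations is precisely where the time-scale setting departs from the classical $\mathbb{R}$ case.
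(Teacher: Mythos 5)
Your proof is correct in outline, but it takes a genuinely different route from the paper's in both directions, and the comparison is instructive. For the inclusion $AC_{\Delta,a^+}^{\alpha,p}\cap L_\Delta^p\subseteq W_{\Delta,a^+}^{\alpha,p}$ the paper simply cites Theorem \ref{12} to pass from $u$ to ${_a^\mathbb{T}}D_t^{\alpha}u$ under the integral, glossing over the fact that part $(b)$ of that theorem requires $f\in{_a^\mathbb{T}}I_t^\alpha(L^q)$, a hypothesis the singular term $\tfrac{c}{\Gamma(\alpha)}(t-a)^{\alpha-1}$ does not satisfy; your decomposition $u=u_s+w$ and the separate verification that $\int_{J^0}u_s\cdot{_t^\mathbb{T}}D_b^{\alpha}\psi\,\Delta t=0$ (via ${_a^\mathbb{T}}I_t^{1-\alpha}u_s\equiv c$, Theorem \ref{12}$(a)$ at order $1-\alpha$, and $\int_{J^0}\psi^\Delta\Delta t=0$) repairs exactly this point, so your argument is the more careful of the two. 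For the reverse inclusion both arguments funnel through the same identity $\int_{J^0}({_a^\mathbb{T}}I_t^{1-\alpha}u)\varphi^\Delta\Delta t=-\int_{J^0}g\varphi\Delta t$, but they exit it differently: the paper reads it as saying ${_a^\mathbb{T}}I_t^{1-\alpha}u\in W_{\Delta,a^+}^{1,p}$ and invokes the first-order Sobolev theory of \cite{7} to get absolute continuity with delta derivative $g$, whereas you subtract the particular solution $v={_a^\mathbb{T}}I_t^\alpha g$ and characterize the kernel of the weak-derivative operator via Lemma \ref{13} and the $\varphi=0$ case of Theorem \ref{23}. The paper's exit is slightly more economical because it only needs the already-established equivalence ``$W_{\Delta,a^+}^{1,p}$ = absolutely continuous with $L_\Delta^p$ derivative,'' while yours needs the constant-function conclusion of the du Bois--Reymond lemma for the smaller test class $C_{c,rd}^\infty$ rather than $C_{0,rd}^1(J^k)$ --- a mismatch you correctly flag, but one the paper itself incurs when it asserts uniqueness of the weak derivative for the same test class, so it is not a defect relative to the paper's own standard of rigor. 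The two delicate points you isolate (the vanishing of the singular pairing and the time-scale Caputo/Riemann--Liouville coincidence ${_t^{\mathbb{T}\,C}}D_b^\alpha\phi={_t^\mathbb{T}}D_b^\alpha\phi$ for $\phi$ vanishing at $b$) are real and are essentially the same boundary-term bookkeeping that the paper compresses into the unexplained middle step of its chain $(\ref{27})$; if anything your version makes explicit what the paper leaves implicit.
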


\begin{proof}
On the one hand, if $u\in AC_{\Delta,a^+}^{\alpha,p}\cap L_\Delta^p$, then from Theorem \ref{23} it follows that $u$ has   derivative ${_a^\mathbb{T}}D_t^{\alpha}u\in L_\Delta^p$. Theorem \ref{12} implies that
\begin{eqnarray*}
\int_{J^0}u(t)\,{_t^\mathbb{T}}D_b^{\alpha}\varphi(t)\Delta t=\int_{J^0}({_a^\mathbb{T}}D_t^{\alpha}u)(t)\,\varphi(t)\Delta t
\end{eqnarray*}
for any $\varphi\in C_{c,rd}^\infty$. So, $u\in W_{\Delta,a^+}^{\alpha,p}$ with
$
^\mathbb{T}u_{a^+}^\alpha=g={_a^\mathbb{T}}D_t^{\alpha}u\in L_\Delta^p.
$

On the other hand, if $u\in W_{\Delta,a^+}^{\alpha,p}$, then $u\in L_\Delta^p$ and there exists a function $g\in L_\Delta^p$ such that
\begin{eqnarray}\label{26}
\int_{J^0}u(t){_t^\mathbb{T}}D_b^{\alpha}\varphi(t)\Delta t=\int_{J^0}g(t)\varphi(t)\Delta t
\end{eqnarray}
for any $\varphi\in C_{c,rd}^\infty$.
To show that $u\in AC_{\Delta,a^+}^{\alpha,p}\cap L_\Delta^p$ it suffices to check (Theorem \ref{23} and definition of $AC_{\Delta,a^+}^{\alpha,p}$) that $u$ possesses the left Riemann$-$Liouville derivative of order $\alpha$, which belongs to $L_\Delta^p$, that is,   ${_a^\mathbb{T}}I_t^{1-\alpha}u$ is absolutely continuous on $J$ and its delta derivative of $\alpha$ order (existing $\Delta-a.e.$ on $J$) belongs to $L_\Delta^p$.

In fact, let $\varphi\in C_{c,rd}^\infty$, then $\varphi\in {_t^\mathbb{T}}D_b^{\alpha}(C_{rd})$ and ${_t^\mathbb{T}}D_b^{\alpha}\varphi=-({_t^\mathbb{T}}I_b^{1-\alpha})^\Delta$. From Theorem \ref{12}   it follows that
\begin{align}\label{27}
\int_{J^0}u(t){_t^\mathbb{T}}D_b^{\alpha}\varphi(t)\Delta t
=&\int_{J^0}u(t)(-{_t^\mathbb{T}}I_b^{1-\alpha}\varphi)^\Delta(t)\Delta t\nonumber\\
=&\int_{J^0}({_a^\mathbb{T}}D_t^{1-\alpha}{_a^\mathbb{T}}I_t^{1-\alpha}u)(t)(-{_t^\mathbb{T}}I_b^{1-\alpha}\varphi)^\Delta(t)\Delta t\nonumber\\
=&\int_{J^0}({_a^\mathbb{T}}I_t^{1-\alpha}u)(t)(-\varphi)^\Delta(t)\Delta t\nonumber\\
=&-\int_{J^0}({_a^\mathbb{T}}I_t^{1-\alpha}u)(t)\varphi^\Delta(t)\Delta t.
\end{align}
In view of $(\ref{26})$ and $(\ref{27})$, we get
\begin{eqnarray*}
\int_{J^0}({_a^\mathbb{T}}I_t^{1-\alpha}u)(t)\varphi^\Delta(t)\Delta t=-\int_{J^0}g(t)\varphi(t)\Delta t
\end{eqnarray*}
for any $\varphi\in C_{c,rd}^\infty$. So, ${_a^\mathbb{T}}I_t^{1-\alpha}u\in W_{\Delta,a^+}^{1,p}$. Consequently, ${_a^\mathbb{T}}I_t^{1-\alpha}u$ is absolutely continuous and its delta derivative is equal $\Delta-a.e.$ on $[a,b]_\mathbb{T}$ to $g\in L_\Delta^p$. The proof is complete.
\end{proof}

From the  proof of Theorem \ref{thm32} and   the uniqueness of the weak fractional derivative the following theorem follows.
\begin{theorem}\label{28}
If $0<\alpha\leq1$ and $1\leq p<\infty$, then the weak left fractional derivative $^\mathbb{T}u_{a^+}^\alpha$ of a function $u\in W_{\Delta,a^+}^{\alpha,p}$ coincides with its left Riemann$-$Liouville fractional derivative ${_a^\mathbb{T}}D_t^{\alpha}u$ $\Delta-a.e.$ on $J$.
\end{theorem}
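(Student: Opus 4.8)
The plan is to obtain Theorem \ref{28} as an essentially immediate corollary of the characterization in Theorem \ref{thm32} together with the uniqueness of the weak fractional derivative recorded in the remark following Definition \ref{def1}. First I would fix $u\in W_{\Delta,a^+}^{\alpha,p}$ and apply Theorem \ref{thm32} to place $u$ in $AC_{\Delta,a^+}^{\alpha,p}\cap L_\Delta^p$. By Theorem \ref{23} (equivalently Remark \ref{re31}$(ii)$), this membership guarantees that $u$ possesses a genuine left Riemann$-$Liouville derivative ${_a^\mathbb{T}}D_t^{\alpha}u$ which lies in $L_\Delta^p$. The goal is then to show that this classical object is a legitimate choice for the weak derivative $g$ appearing in Definition \ref{def1}.

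To this end I would invoke the integration$-$by$-$parts identity of Theorem \ref{12}$(b)$: choosing $f=u$ and letting $g=\varphi$ range over the test functions $C_{c,rd}^\infty$ yields
\[
\int_{J^0}u(t)\,{_t^\mathbb{T}}D_b^{\alpha}\varphi(t)\Delta t=\int_{J^0}({_a^\mathbb{T}}D_t^{\alpha}u)(t)\,\varphi(t)\Delta t,\qquad \forall\,\varphi\in C_{c,rd}^\infty.
\]
This is exactly the computation already carried out in the ``On the one hand'' part of the proof of Theorem \ref{thm32}, so no new work is required. Comparing this identity with Definition \ref{def1}, I see that ${_a^\mathbb{T}}D_t^{\alpha}u\in L_\Delta^p$ serves as an admissible witness $g$ certifying that $u\in W_{\Delta,a^+}^{\alpha,p}$; in other words, the Riemann$-$Liouville derivative is \emph{one} representative of the weak left fractional derivative $^\mathbb{T}u_{a^+}^\alpha$.

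Finally, since the weak fractional derivative is unique (as noted after Definition \ref{def1}, following \cite{7}), the two admissible candidates must agree, giving $^\mathbb{T}u_{a^+}^\alpha={_a^\mathbb{T}}D_t^{\alpha}u$ $\Delta$-a.e. on $J$, which is the assertion. I do not anticipate any genuine obstacle here, as the substance of the argument is entirely inherited from the forward direction of Theorem \ref{thm32}. The only point warranting care is checking that the hypotheses of Theorem \ref{12}$(b)$ are met, namely that $\varphi$ belongs to the appropriate right fractional integral class and that $u$ sits in ${_a^\mathbb{T}}I_t^{\alpha}(L^p)$ up to its admissible singular term $\tfrac{c}{\Gamma(\alpha)}(t-a)^{\alpha-1}$; this is handled precisely as in Theorem \ref{thm32}, where the compact support of $\varphi$ annihilates any boundary contribution.
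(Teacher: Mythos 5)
Your argument is exactly the paper's: the authors deduce Theorem \ref{28} directly from the ``on the one hand'' half of the proof of Theorem \ref{thm32} (where Theorem \ref{12} shows ${_a^\mathbb{T}}D_t^{\alpha}u$ is an admissible witness $g$) together with the uniqueness of the weak derivative. Your write-up is correct and follows the same route, merely spelling out the details the paper leaves implicit.
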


\begin{remark}\label{29}
\begin{itemize}
  \item [$(1)$]
  If $0<\alpha\leq1$ and $(1-\alpha)p<1$, then $AC_{\Delta,a^+}^{\alpha,p}\subset L_\Delta^p$ and, consequently,
  \begin{eqnarray*}
  W_{\Delta,a^+}^{\alpha,p}=AC_{\Delta,a^+}^{\alpha,p}\cap L_\Delta^p=AC_{\Delta,a^+}^{\alpha,p}.
  \end{eqnarray*}
  \item [$(2)$]
  If $0<\alpha\leq1$ and $(1-\alpha)p\geq1$, then $W_{\Delta,a^+}^{\alpha,p}=AC_{\Delta,a^+}^{\alpha,p}\cap L_\Delta^p$ is the set of all functions belong to $AC_{\Delta,a^+}^{\alpha,p}$ that satisfy the condition $({_a^\mathbb{T}}I_t^{1-\alpha}f)(a)=0$.
\end{itemize}
\end{remark}

By using the definition of $W_{\Delta,a^+}^{\alpha,p}$ with $0<\alpha\leq1$ and Theorem \ref{28}, one can easily prove the following result.

\begin{theorem}\label{3.4}
Let $0<\alpha\leq1, 1\leq p<\infty$ and $u\in L_\Delta^p$. Then $u\in W_{\Delta,a^+}^{\alpha,p}$ iff there exists a function $g\in L_\Delta^p$ such that
\begin{eqnarray*}
\int_{J^0}u(t){_t^\mathbb{T}}D_b^{\alpha}\varphi(t)\Delta t=\int_{J^0}g(t)\varphi(t)\Delta t,\quad \varphi\in C_{c,rd}^\infty.
\end{eqnarray*}
In such a case, there exists the left Riemann$-$Liouville derivative ${_a^\mathbb{T}}D_t^{\alpha}u$ of $u$ and $g={_a^\mathbb{T}}D_t^{\alpha}u$.
\end{theorem}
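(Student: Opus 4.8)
The plan is to recognize that this theorem is essentially a repackaging of the definition of $W_{\Delta,a^+}^{\alpha,p}$ together with the identifications already established in Theorems \ref{thm32} and \ref{28}; accordingly, almost nothing new has to be proved, and the text preceding the statement (``one can easily prove'') signals exactly this.

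First I would observe that the asserted equivalence is nothing more than Definition \ref{def1} written out explicitly. By that definition, a function $u\in L_\Delta^p$ belongs to $W_{\Delta,a^+}^{\alpha,p}$ precisely when there exists $g\in L_\Delta^p$ satisfying $\int_{J^0}u(t)\,{_t^\mathbb{T}}D_b^{\alpha}\varphi(t)\,\Delta t=\int_{J^0}g(t)\,\varphi(t)\,\Delta t$ for every $\varphi\in C_{c,rd}^\infty$. Hence the ``iff'' part requires no additional argument beyond quoting the definition.

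Next, to obtain the ``in such a case'' conclusion, I would invoke the characterization $W_{\Delta,a^+}^{\alpha,p}=AC_{\Delta,a^+}^{\alpha,p}\cap L_\Delta^p$ of Theorem \ref{thm32}. Membership in $AC_{\Delta,a^+}^{\alpha,p}$ means that $u$ admits the representation $(\ref{22})$ with $\varphi\in L_\Delta^p$, and by Theorem \ref{23} together with Remark \ref{re31}$(ii)$ this is equivalent to $u$ possessing a left Riemann--Liouville derivative ${_a^\mathbb{T}}D_t^{\alpha}u\in L_\Delta^p$. Thus the very membership $u\in W_{\Delta,a^+}^{\alpha,p}$ already guarantees the existence of ${_a^\mathbb{T}}D_t^{\alpha}u$.

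Finally, to identify $g$ with this derivative I would appeal to Theorem \ref{28}, which asserts that the weak left fractional derivative $^\mathbb{T}u_{a^+}^\alpha$ of a function $u\in W_{\Delta,a^+}^{\alpha,p}$ coincides $\Delta$-a.e.\ on $J$ with its left Riemann--Liouville derivative ${_a^\mathbb{T}}D_t^{\alpha}u$. Since the $g$ furnished by the defining integral identity is by construction the weak derivative $^\mathbb{T}u_{a^+}^\alpha$, this yields $g={_a^\mathbb{T}}D_t^{\alpha}u$ $\Delta$-a.e., completing the proof. I do not anticipate a genuine obstacle here, as every ingredient is already available; the only point worth flagging is that the uniqueness of the weak fractional derivative (noted in the remark following Definition \ref{def1}) is what makes the identification $g={_a^\mathbb{T}}D_t^{\alpha}u$ unambiguous rather than merely one admissible representative.
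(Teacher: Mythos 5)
Your proposal is correct and matches the paper's intended argument exactly: the paper gives no written proof, stating only that the result follows ``by using the definition of $W_{\Delta,a^+}^{\alpha,p}$ \ldots and Theorem \ref{28}'', which is precisely the route you take (definition for the ``iff'', Theorems \ref{thm32}/\ref{23} for existence of ${_a^\mathbb{T}}D_t^{\alpha}u$, and Theorem \ref{28} plus uniqueness of the weak derivative for the identification $g={_a^\mathbb{T}}D_t^{\alpha}u$).
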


\begin{remark}
Function $g$ will be called the weak left fractional derivative of $u\in W_{\Delta,a^+}^{\alpha,p}$ of order $\alpha$. Its uniqueness follows from \cite{7}. From the above theorem it follows that it coincides with an appropriate Riemann$-$Liouville derivative.
\end{remark}

Let us fix $0<\alpha\leq1$ and consider in the space $W_{\Delta,a^+}^{\alpha,p}$ a norm $\|\cdot\|_{W_{\Delta,a^+}^{\alpha,p}}$ given by
\begin{eqnarray*}
\|u\|_{W_{\Delta,a^+}^{\alpha,p}}^p=\|u\|_{L_\Delta^p}^p+\|_a^\mathbb{T}D_t^\alpha u\|_{L_\Delta^p}^p,\quad u\in W_{\Delta,a^+}^{\alpha,p}.
\end{eqnarray*}
(Here $\|\cdot\|_{L_\Delta}^p$ denotes the delta norm in $L_\Delta^p$ (Theorem \ref{15})).

\begin{lemma}\label{30}
Let $0<\alpha\leq1$ and $1\leq p<\infty$. For any $f\in L_\Delta^p([a,b]_{\mathbb{T}},\mathbb{R}^N)$, we have
\begin{eqnarray}\label{LS1}
\|{_a^\mathbb{T}I_\xi^\alpha} f\|_{L_\Delta^p([a,t]_{\mathbb{T}})}\leq \frac{(t-a)^\alpha}{\Gamma(\alpha+1)}\|f\|_{L_\Delta^p([a,t]_{\mathbb{T}})},\quad\mathrm{for}\,\xi\in [a,t]_{\mathbb{T}},\,t\in[a,b]_{\mathbb{T}}.
\end{eqnarray}
That is to say, the fractional integration operator is bounded in $L_\Delta^p$.
\end{lemma}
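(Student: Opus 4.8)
The plan is to mirror the classical proof of the $L^p$-boundedness of the Riemann$-$Liouville operator (Theorem \ref{16}), replacing Lebesgue integrals by $\Delta$-integrals and the ordinary Hölder inequality by its time scale analogue (Proposition \ref{17}). The whole argument rests on a single auxiliary estimate for the fractional integral of the constant function $1$, namely
\[
{_a^\mathbb{T}I_\xi^\alpha}1=\frac{1}{\Gamma(\alpha)}\int_a^\xi(\xi-\sigma(s))^{\alpha-1}\Delta s\le\frac{(\xi-a)^\alpha}{\Gamma(\alpha+1)},\qquad \xi\in[a,b]_\mathbb{T},
\]
equivalently $\int_a^\xi(\xi-\sigma(s))^{\alpha-1}\Delta s\le (\xi-a)^\alpha/\alpha$. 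First I would establish this by comparing the $\Delta$-integral of the kernel $s\mapsto(\xi-\sigma(s))^{\alpha-1}$ with the Lebesgue integral of its extension via Proposition \ref{2}, so as to reduce the right-hand side to the elementary identity $\int_a^\xi(\xi-s)^{\alpha-1}\,ds=(\xi-a)^\alpha/\alpha$; the relation $\Gamma(\alpha+1)=\alpha\Gamma(\alpha)$ is what makes the constants match.

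For $p>1$, let $q$ be the conjugate exponent. Starting from $|{_a^\mathbb{T}I_\xi^\alpha}f(\xi)|\le\frac{1}{\Gamma(\alpha)}\int_a^\xi(\xi-\sigma(s))^{\alpha-1}|f(s)|\Delta s$, I would split the kernel as $(\xi-\sigma(s))^{\alpha-1}=(\xi-\sigma(s))^{(\alpha-1)/q}\,(\xi-\sigma(s))^{(\alpha-1)/p}$ and apply Hölder's inequality (Proposition \ref{17}) in the variable $s$. The first factor yields $\big(\int_a^\xi(\xi-\sigma(s))^{\alpha-1}\Delta s\big)^{1/q}$, which the auxiliary estimate bounds by $\left(\tfrac{(t-a)^\alpha}{\alpha}\right)^{1/q}$ for $\xi\le t$, while the second factor leaves $\int_a^\xi(\xi-\sigma(s))^{\alpha-1}|f(s)|^p\Delta s$ beneath the $p$-th power.

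Raising to the $p$-th power, integrating over $\xi\in[a,t]_\mathbb{T}$, and interchanging the order of integration exactly as in the proof of Theorem \ref{12}$(a)$ (the $\Delta$-Fubini step $\int_a^t\int_a^\xi\cdots\Delta s\,\Delta\xi=\int_a^t|f(s)|^p\int_{s}^t(\xi-\sigma(s))^{\alpha-1}\Delta\xi\,\Delta s$), one again bounds the inner integral by $(t-a)^\alpha/\alpha$ through the same auxiliary estimate. Collecting exponents and using $p/q+1=p$, the accumulated constant becomes $\frac1{\Gamma(\alpha)}\cdot\frac{(t-a)^\alpha}{\alpha}=\frac{(t-a)^\alpha}{\Gamma(\alpha+1)}$, and taking $p$-th roots gives $(\ref{LS1})$. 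The case $p=1$ is the degenerate version of this computation, obtained by applying $\Delta$-Fubini directly with no Hölder splitting.

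The main obstacle is the auxiliary kernel estimate itself. On a general time scale the presence of $\sigma(s)$ makes $(\xi-\sigma(s))^{\alpha-1}$ pointwise at least as large as the continuous kernel $(\xi-s)^{\alpha-1}$, so the comparison with the Lebesgue integral through Proposition \ref{2} must be arranged with care: one has to control the contribution of the right$-$scattered points and the behaviour of the kernel as $\sigma(s)\to\xi$ in order to land precisely on the constant $(\xi-a)^\alpha/\alpha$. Once this estimate is secured, the remaining Hölder and $\Delta$-Fubini bookkeeping is routine and reproduces the stated bound.
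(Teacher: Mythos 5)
Your $p=1$ case coincides with the paper's (Fubini plus the kernel estimate), but for $1<p<\infty$ you take a genuinely different route. You use the classical kernel-splitting argument: write $(\xi-\sigma(s))^{\alpha-1}=(\xi-\sigma(s))^{(\alpha-1)/q}(\xi-\sigma(s))^{(\alpha-1)/p}$, apply H\"older in $s$, raise to the $p$-th power, integrate in $\xi$, and apply $\Delta$-Fubini. The paper instead runs a duality argument: it bounds the pairing $\int_a^t g(\xi)\,{_a^\mathbb{T}I_\xi^\alpha}f(\xi)\Delta\xi$ for arbitrary $g\in L_\Delta^q$ (using the commutativity of the time-scale convolution, Theorem \ref{LTm4}, then Fubini and H\"older), views this as a bounded linear functional $H_{\xi\ast f}$ on $L_\Delta^q$, and invokes the Riesz representation theorem to produce $h\in L_\Delta^p$ with $\|h\|_{L_\Delta^p}\leq\frac{(t-a)^\alpha}{\alpha}\|f\|_{L_\Delta^p}$, finally identifying $h/\Gamma(\alpha)$ with ${_a^\mathbb{T}I_\xi^\alpha}f$. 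Your version is more elementary and self-contained (no convolution theorem, no duality of $L_\Delta^p$), and the exponent bookkeeping $p/q+1=p$ does reproduce the constant $\frac{(t-a)^\alpha}{\Gamma(\alpha+1)}$; the paper's version avoids manipulating the singular kernel under a fractional power at the price of the representation-theorem machinery.

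The one step you leave unfinished is the same one the paper glosses over, and you are right to flag it as the main obstacle: the estimate $\int_a^\xi(\xi-\sigma(s))^{\alpha-1}\Delta s\leq(\xi-a)^\alpha/\alpha$ (and, after Fubini, its companion $\int_s^t(\xi-\sigma(s))^{\alpha-1}\Delta\xi\leq(t-s)^\alpha/\alpha$, which is a different integral and must be justified separately). Be aware that Proposition \ref{2} does not deliver these directly: since $\sigma(s)\geq s$ and $\alpha-1\leq0$, one has $(\xi-\sigma(s))^{\alpha-1}\geq(\xi-s)^{\alpha-1}$ pointwise, so the comparison with the continuous kernel goes the \emph{wrong} way, and on a time scale with a right-scattered point adjacent to $\xi$ the $\Delta$-integral of the kernel can even diverge. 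Any complete proof (yours or the paper's) must therefore add a hypothesis or an argument that controls the scattered-point contributions; as written, both proofs assert this inequality rather than prove it. Modulo that shared lacuna, your argument is sound.
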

\begin{proof}
Inspired by  Theorem \ref{16} and the proof of Lemma 3.1 of \cite{L4}, we can prove (\ref{LS1}).

In fact, if $p=1$, in light of Definition \ref{3}, Theorem \ref{LTm3}, Fubini's theorem on time scales and Proposition \ref{2}, we have
{\setlength\arraycolsep{2pt}
\begin{eqnarray}\label{LS2}
&&\|{_a^\mathbb{T}I_\xi^\alpha} f\|_{L_\Delta^1([a,t]_{\mathbb{T}})}\nonumber\\
&=&\int_a^t\vert{_a^\mathbb{T}I_\xi^\alpha} f\vert\Delta \xi\nonumber\\
&=&\frac{1}{\Gamma(\alpha)}\int_a^t\left\vert\int_a^\xi(\xi-\sigma(\tau))
^{\alpha-1}f(\tau)\right\vert\Delta \tau\Delta \xi\nonumber\\
&\leq&\frac{1}{\Gamma(\alpha)}\int_a^t\int_a^\xi(\xi-\sigma(\tau))
^{\alpha-1}\vert f(\tau)\vert\Delta \tau\Delta \xi\nonumber\\
&=&\frac{1}{\Gamma(\alpha)}\int_a^t\vert f(\tau)\vert\Delta \tau\int_\tau^t(\xi-\sigma(\tau))
^{\alpha-1}\Delta \xi\nonumber\\
&\leq&\frac{1}{\Gamma(\alpha)}\int_a^t\vert f(\tau)\vert\Delta \tau\int_\tau^t(\xi-\tau)
^{\alpha-1}d \xi\nonumber\\
&=&\frac{1}{\Gamma(\alpha)}\int_a^t\vert f(\tau)\vert(t-\tau)^\alpha\Delta \tau\nonumber\\
&\leq&\frac{(t-a)^\alpha}{\Gamma(\alpha+1)}\|f\|_{L_\Delta^1([a,t]_{\mathbb{T}})},
\quad\mathrm{for}\,t\in[a,b]_{\mathbb{T}}.
\end{eqnarray}}
Now, suppose that $1<p<\infty$ and $g\in L_\Delta^q([a,b]_{\mathbb{T}},\mathbb{R}^N)$, where $\frac{1}{p}+\frac{1}{q}=1$. In consideration of Theorem \ref{LTm4}, Theorem \ref{LTm3}, Fubini's theorem on time scales, Proposition \ref{17} and Proposition \ref{2}, one arrives at
{\setlength\arraycolsep{2pt}
\begin{eqnarray}\label{LS3}
&&\left\vert\int_a^tg(\xi)\int_a^\xi(\xi-\sigma(\tau))^{\alpha-1}
f(\tau)\Delta\tau\Delta\xi\right\vert\nonumber\\
&=&\left\vert\int_a^tg(\xi)\int_a^\xi\tau^{\alpha-1}
f(\xi-\sigma(\tau))\Delta\tau\Delta\xi\right\vert\nonumber\\
&\leq&\int_a^t\vert g(\xi)\vert\int_a^\xi\tau^{\alpha-1}
\vert f(\xi-\sigma(\tau))\vert\Delta\tau\Delta\xi\nonumber\\
&\leq&\int_a^t\tau^{\alpha-1}\Delta\tau\int_\tau^t\vert g(\xi)\vert
\vert f(\xi-\sigma(\tau))\vert\Delta\xi\nonumber\\
&\leq&\int_a^t\tau^{\alpha-1}\Delta\tau\left(\int_\tau^t\vert g(\xi)\vert^q\Delta\xi\right)^{\frac{1}{q}}\left(\int_\tau^t\vert f(\xi-\sigma(\tau))\vert^p\Delta\xi\right)^{\frac{1}{p}}\nonumber\\
&\leq&\int_a^t\tau^{\alpha-1}d\tau\|g\|_{L_\Delta^q([a,t]_{\mathbb{T}})}
\|f\|_{L_\Delta^p([a,t]_{\mathbb{T}})}\nonumber\\
&=&\frac{(t-a)^\alpha}{\alpha}\|f\|_{L_\Delta^p([a,t]_{\mathbb{T}})}
\|g\|_{L_\Delta^q([a,t]_{\mathbb{T}})},\quad\mathrm{for}\,t\in[a,b]_{\mathbb{T}}.
\end{eqnarray}}
For any fixed $t\in[a,b]_{\mathbb{T}}$, consider the functional $H_{\xi\ast f}:L_\Delta^q([a,b]_{\mathbb{T}},\mathbb{R}^N)\rightarrow\mathbb{R}$

\begin{align}\label{LS4}
H_{\xi\ast f}(g)=\int_a^t\left[\int_a^\xi(\xi-\sigma(\tau))^{\alpha-1}f(\tau)\Delta\tau\right] g(\xi)\Delta\xi
\end{align}
According to (\ref{LS3}), it is obvious that $H_{\xi\ast f}\in\left(L_\Delta^q([a,b]_{\mathbb{T}},\mathbb{R}^N)\right)^*$, where $\left(L_\Delta^q([a,b]_{\mathbb{T}},\mathbb{R}^N)\right)^*$ denotes the dual space of $L_\Delta^q([a,b]_{\mathbb{T}},\mathbb{R}^N)$. Therefore, by (\ref{LS3}) and (\ref{LS4}) and the Riesz representation theorem, there exists $h\in L_\Delta^p([a,b]_{\mathbb{T}},\mathbb{R}^N)$ such that
\begin{align}\label{LS5}
\int_a^th(\xi)g(\xi)\Delta\xi=\int_a^t\left[\int_a^\xi(\xi-\sigma(\tau))^{\alpha-1}f(\tau)\Delta\tau\right] g(\xi)\Delta\xi
\end{align}
and
\begin{align}\label{LS6}
\|h\|_{L_\Delta^p([a,t]_{\mathbb{T}})}
=\|H_{\xi\ast f}\|_{L_\Delta^p([a,t]_{\mathbb{T}})}
\leq\frac{(t-a)^\alpha}{\alpha}\|f\|_{L_\Delta^p([a,t]_{\mathbb{T}})}
\end{align}
for all $g\in L_\Delta^q([a,b]_{\mathbb{T}},\mathbb{R}^N)$. Hence, we have by (\ref{LS5}) and Definition \ref{3}
\begin{align*}
\frac{1}{\Gamma(\alpha)}h(\xi)
=\frac{1}{\Gamma(\alpha)}\int_a^\xi(\xi-\sigma(\tau))^{\alpha-1}f(\tau)\Delta\tau
={_a^\mathbb{T}I_\xi^\alpha} f(\xi),\quad\mathrm{for}\,\xi\in[a,t]_{\mathbb{T}},
\end{align*}
which means that
\begin{align}\label{LS7}
\|{_a^\mathbb{T}I_\xi^\alpha} f\|_{L_\Delta^p([a,t]_{\mathbb{T}})}
=\frac{1}{\Gamma(\alpha)}\|h\|_{L_\Delta^p([a,t]_{\mathbb{T}})}
\leq\frac{(t-a)^\alpha}{\Gamma(\alpha+1)}\|f\|_{L_\Delta^p([a,t]_{\mathbb{T}})}
\end{align}
according to (\ref{LS6}). Combining with (\ref{LS2}) and (\ref{LS7}), we obtain inequality (\ref{LS1}). The proof is complete.
\end{proof}

\begin{theorem}\label{thm35}
If $0<\alpha\leq1$, then the norm $\|\cdot\|_{W_{\Delta,a^+}^{\alpha,p}}$ is equivalent to the norm $\|\cdot\|_{a, W_{\Delta,a^+}^{\alpha,p}}$ given by
\begin{eqnarray*}
\|u\|_{a,W_{\Delta,a^+}^{\alpha,p}}^p=\vert{_a^\mathbb{T}I_t^{1-\alpha}}u(a)\vert^p+\|_a^\mathbb{T}D_t^\alpha u\|_{L_\Delta^p}^p,\quad u\in W_{\Delta,a^+}^{\alpha,p}.
\end{eqnarray*}
\end{theorem}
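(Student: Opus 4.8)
The plan is to reduce the equivalence to a single scalar comparison and then exploit the structural representation of Theorem~\ref{23}. By Theorem~\ref{23} together with Theorem~\ref{thm32}, every $u\in W_{\Delta,a^+}^{\alpha,p}=AC_{\Delta,a^+}^{\alpha,p}\cap L_\Delta^p$ admits the representation
\begin{equation*}
u(t)=\frac{c}{\Gamma(\alpha)}(t-a)^{\alpha-1}+{_a^\mathbb{T}}I_t^\alpha\varphi(t),\qquad c=\left({_a^\mathbb{T}}I_t^{1-\alpha}u\right)(a),\quad \varphi={_a^\mathbb{T}}D_t^\alpha u\in L_\Delta^p.
\end{equation*}
Since both candidate norms share the common term $\|{_a^\mathbb{T}}D_t^\alpha u\|_{L_\Delta^p}^p=\|\varphi\|_{L_\Delta^p}^p$, establishing their equivalence amounts to bounding $\|u\|_{L_\Delta^p}$ and $|c|$ against one another in terms of $|c|$ and $\|\varphi\|_{L_\Delta^p}$. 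Throughout I will write $M:=\frac{(b-a)^\alpha}{\Gamma(\alpha+1)}$ for the operator bound supplied by Lemma~\ref{30}.

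First I would dispose of the generic situation in which the kernel $t\mapsto(t-a)^{\alpha-1}$ belongs to $L_\Delta^p$, and set $K:=\|(t-a)^{\alpha-1}\|_{L_\Delta^p}\in(0,\infty)$. For the upper estimate, apply the triangle inequality to the representation and control the integral term through Lemma~\ref{30}, obtaining $\|u\|_{L_\Delta^p}\le \frac{K}{\Gamma(\alpha)}|c|+M\|\varphi\|_{L_\Delta^p}$. For the reverse estimate, rearrange the representation as $\frac{c}{\Gamma(\alpha)}(t-a)^{\alpha-1}=u-{_a^\mathbb{T}}I_t^\alpha\varphi$, take $L_\Delta^p$ norms, and invoke Lemma~\ref{30} once more to get $\frac{K}{\Gamma(\alpha)}|c|\le\|u\|_{L_\Delta^p}+M\|\varphi\|_{L_\Delta^p}$. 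Inserting these two inequalities into the elementary bound $(x+y)^p\le2^{p-1}(x^p+y^p)$ produces constants $c_1,c_2>0$ with $c_1\|u\|_{a,W_{\Delta,a^+}^{\alpha,p}}^p\le\|u\|_{W_{\Delta,a^+}^{\alpha,p}}^p\le c_2\|u\|_{a,W_{\Delta,a^+}^{\alpha,p}}^p$, which is the claimed equivalence.

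The delicate point, and what lets the theorem stand for all $0<\alpha\le1$ and $1\le p<\infty$, is the complementary case in which $(t-a)^{\alpha-1}\notin L_\Delta^p$ (this happens, e.g., when $(1-\alpha)p\ge1$, as recorded in Remark~\ref{29}$(2)$, but also when the left endpoint $a$ is right-scattered and $\alpha<1$, since then the $\Delta$-measure carries an atom at $a$ on which the kernel is infinite). Here one argues directly: because $u\in L_\Delta^p$ and ${_a^\mathbb{T}}I_t^\alpha\varphi\in L_\Delta^p$ by Lemma~\ref{30}, the identity $\frac{c}{\Gamma(\alpha)}(t-a)^{\alpha-1}=u-{_a^\mathbb{T}}I_t^\alpha\varphi$ forces $c=0$; hence $u={_a^\mathbb{T}}I_t^\alpha\varphi$, $|c|^p=0$, and $\|u\|_{L_\Delta^p}\le M\|\varphi\|_{L_\Delta^p}$, so both norms are comparable to $\|\varphi\|_{L_\Delta^p}^p$ and the singular term never enters. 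The main obstacle is thus entirely concentrated in the analysis of this kernel: verifying its $L_\Delta^p$-integrability and the strict positivity of $K$ when it is integrable (where the possible atom of the $\Delta$-measure at $a$ must be examined), and recognizing that exactly when integrability fails the coefficient $c$ is annihilated, so that the two-sided estimate degenerates without harm. Everything else reduces to the triangle inequality and the already-established boundedness of the fractional integral.
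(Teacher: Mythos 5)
Your argument is correct, but it reaches the harder inequality $\|u\|_{a,W_{\Delta,a^+}^{\alpha,p}}^p\leq M_{\alpha,1}\|u\|_{W_{\Delta,a^+}^{\alpha,p}}^p$ by a genuinely different route than the paper. The easy direction (triangle inequality on the representation of Theorem \ref{23}, Lemma \ref{30} for the integral term, an $L_\Delta^p$ bound on the kernel $(t-a)^{\alpha-1}$, and the elementary $2^{p-1}$ inequality) is common to both. For the reverse direction the paper does not isolate the singular term: it applies the time-scale mean value theorem (Theorem \ref{18}, Corollary \ref{19}) to the coordinates of ${_a^\mathbb{T}}I_t^{1-\alpha}u$, propagates the value from an interior point back to $a$ via the absolute-continuity representation of Theorem \ref{9}, and so bounds $\vert c\vert=\vert({_a^\mathbb{T}}I_t^{1-\alpha}u)(a)\vert$ by $\|u\|_{L_\Delta^1}+\|{_a^\mathbb{T}}D_t^\alpha u\|_{L_\Delta^1}$ and then by the $L_\Delta^p$ norms. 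You instead read $\vert c\vert$ off directly from $\frac{c}{\Gamma(\alpha)}(t-a)^{\alpha-1}=u-{_a^\mathbb{T}}I_t^\alpha\varphi$ by taking $L_\Delta^p$ norms, at the price of needing $K=\|(t-a)^{\alpha-1}\|_{L_\Delta^p}$ finite and nonzero. Your case split (kernel in $L_\Delta^p$ versus not, with $c$ forced to vanish in the latter case) is also more intrinsic than the paper's split on $(1-\alpha)p<1$ versus $(1-\alpha)p\geq1$, and it buys real robustness: if $a$ is right-scattered and $\alpha<1$, the $\Delta$-measure has an atom at $a$ on which the kernel is infinite, so $\int_{J^0}(t-a)^{(\alpha-1)p}\Delta t$ diverges even when $(1-\alpha)p<1$; the paper's appeal to Proposition \ref{2} at that step (stated for increasing functions, while $(t-a)^{(\alpha-1)p}$ is decreasing) does not cover this configuration, whereas your dichotomy simply pushes it into the degenerate case $c=0$. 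What the paper's longer route buys is a pointwise bound on $\vert({_a^\mathbb{T}}I_t^{1-\alpha}u)(t)\vert$ for every $t\in J$, not only at $t=a$. To finish, you should still record explicitly that $K>0$ whenever the kernel is $p$-integrable (immediate, since it is strictly positive on a set of positive $\Delta$-measure) and carry out the $2^{p-1}$ manipulations converting your two linear inequalities into the two-sided $p$-th power estimate.
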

\begin{proof}
  $(1)$
  Assume that $(1-\alpha)p<1$. On the one hand, in view of Remarks \ref{re31} and \ref{29}, for $u\in W_{\Delta,a^+}^{\alpha,p}$, we can write it as
  \begin{eqnarray*}
  u(t)=\frac{1}{\Gamma(\alpha)}\frac{c}{(t-a)^{1-\alpha}}+{_a^\mathbb{T}I_t^\alpha\varphi(t)}
  \end{eqnarray*}
  with $c\in\mathbb{R}^N$ and $\varphi\in L_\Delta^p$. Since $(t-a)^{(\alpha-1)p}$ is an increasing monotone function, by using Proposition \ref{2}, we can write that $\int_{J^0}(t-a)^{(\alpha-1)p}\Delta t\leq \int_{J_{\mathbb{R}}^0}(t-a)^{(\alpha-1)p}dt$. And taking into account Lemma \ref{30}, we have
  \begin{align*}
  \|u\|_{L_\Delta^p}^p
  =&\int_{J^0}\bigg\vert\frac{1}{\Gamma(\alpha)}\frac{c}{(t-a)^{1-\alpha}}
  +_a^\mathbb{T}I_t^\alpha\varphi(t)\bigg\vert^p\Delta t\\
  \leq&2^{p-1}\bigg(\frac{\vert c\vert^p}{\Gamma^p(\alpha)}\int_{J^0}(t-a)^{(\alpha-1)p}\Delta t+\|_a^\mathbb{T}I_t^\alpha\varphi\|_{L_\Delta^p}^p\bigg)\\
  \leq&2^{p-1}\bigg(\frac{\vert c\vert^p}{\Gamma^p(\alpha)}\int_{J_{\mathbb{R}}^0}(t-a)^{(\alpha-1)p}dt +\|_a^\mathbb{T}I_t^\alpha\varphi\|_{L_\Delta^p}^p\bigg)\\
  \leq&2^{p-1}\bigg(\frac{\vert c\vert^p}{\Gamma^p(\alpha)}{1}{(\alpha-1)p+1}(b-a)^{(\alpha-1)p+1} +K^p\|\varphi\|_{L_\Delta^p}^p\bigg),
  \end{align*}
  where $K=\frac{(b-a)^\alpha}{\Gamma(\alpha+1)}$. Noting that $c={_a^\mathbb{T}}I_t^{1-\alpha}u(a)$, $\varphi={_a^\mathbb{T}}D_t^\alpha u$,   one can obtain
  \begin{align*}
  \|u\|_{L_\Delta^p}^p
  \leq&L_{\alpha,0}(\vert c\vert^p+\|\varphi\|_{L_\Delta^p}^p)\\
  \leq&L_{\alpha,0}\bigg(\vert{_a^\mathbb{T}}I_t^{1-\alpha}u(a)\vert^p+\|{_a^\mathbb{T}}D_t^\alpha u\|_{L_\Delta^p}^p\bigg)\\
  =&L_{\alpha,0}\|u\|_{a,W_{\Delta,a^+}^{\alpha,p}}^p,
  \end{align*}
  where
  \begin{eqnarray*}
  L_{\alpha,0}=2^{p-1}\bigg(\frac{(b-a)^{1-(1-\alpha)p}}{\Gamma^p(\alpha)(1-(1-\alpha)p)} +K^p\bigg).
  \end{eqnarray*}
  Consequently,
  \begin{align*}
  \|u\|_{W_{\Delta,a^+}^{\alpha,p}}^p
  =&\|u\|_{L_\Delta^P}^P+\|_a^\mathbb{T}D_t^\alpha u\|_{L_\Delta^p}^p\\
  \leq&L_{\alpha,1}\|u\|_{a,W_{\Delta,a^+}^{\alpha,p}}^p,
  \end{align*}
  where $L_{\alpha,1}=L_{\alpha,0}+1$.

  On the other hand,  we will prove that there exists a constant $M_{\alpha,1}$ such that
  \begin{eqnarray}\label{st}
  \|u\|_{a,W_{\Delta,a^+}^{\alpha,p}}^p\leq M_{\alpha,1}\|u\|_{W_{\Delta,a^+}^{\alpha,p}}^p,
  \quad u\in W_{\Delta,a^+}^{\alpha,p}.
  \end{eqnarray}
  Indeed, let $u\in W_{\Delta,a^+}^{\alpha,p}$ and consider  coordinate functions $({_a^\mathbb{T}}I_t^{1-\alpha}u)^i$ of ${_a^\mathbb{T}}I_t^{1-\alpha}u$ with   $i\in\{1,\ldots,N\}$. Lemma \ref{30}, Theorem \ref{18} and Corollary \ref{19} imply that there exist constants
  \begin{eqnarray*}
  \Lambda_i\in
  \bigg[\inf_{t\in[a,b)_\mathbb{T}}({_a^\mathbb{T}}I_t^{1-\alpha}u)^i(t),
  \sup_{t\in[a,b)_\mathbb{T}}({_a^\mathbb{T}}I_t^{1-\alpha}u)^i(t)\bigg],\quad (i=1,2,\ldots,N)
  \end{eqnarray*}
  such that
  \begin{eqnarray*}
  \Lambda_i=\frac{1}{b-a}\int_a^b
  ({_a^\mathbb{T}}I_t^{1-\alpha}u)^i(s)\Delta s.
  \end{eqnarray*}
  Hence, for a fixed $t_0\in J^0$, if $({_a^\mathbb{T}}I_t^{1-\alpha}u)^i(t_0)\neq0$ for all $i=1,2,\ldots,N$, then we can take constants $\theta_i$ such that

  \begin{eqnarray*}
  \theta_i({_a^\mathbb{T}}I_t^{1-\alpha}u)^i(t_0)=\Lambda_i=\frac{1}{b-a}\int_a^b
  ({_a^\mathbb{T}}I_t^{1-\alpha}u)^i(s)\Delta s.
  \end{eqnarray*}
  Therefore, we have
  \begin{eqnarray*}
  ({_a^\mathbb{T}}I_t^{1-\alpha}u)^i(t_0)=\frac{\theta_i}{b-a}\int_a^b
  ({_a^\mathbb{T}}I_t^{1-\alpha}u)^i(s)\Delta s.
  \end{eqnarray*}
  From the absolute continuity (Theorem \ref{9}) of $({_a^\mathbb{T}}I_t^{1-\alpha}u)^i$ it follows that
  \begin{eqnarray*}
  ({_a^\mathbb{T}}I_t^{1-\alpha}u)^i(t)=({_a^\mathbb{T}}I_t^{1-\alpha}u)^i(t_0)
  +\int_{[t_0,t)_\mathbb{T}}\bigg[({_a^\mathbb{T}}I_t^{1-\alpha}u)^i(s)\bigg]^\Delta \Delta s
  \end{eqnarray*}
  for any $t\in J$. Consequently, combining with Proposition \ref{5} and Lemma \ref{30}, we see that
  \begin{align*}
  \vert({_a^\mathbb{T}}I_t^{1-\alpha}u)^i(t)\vert
  =&\bigg\vert({_a^\mathbb{T}}I_t^{1-\alpha}u)^i(t_0)
  +\int_{[t_0,t)_\mathbb{T}}\bigg[({_a^\mathbb{T}}I_t^{1-\alpha}u)^i(s)\bigg]^\Delta \Delta s\bigg\vert\\
  \leq&\frac{\vert\theta_i\vert}{b-a}
  \|{_a^\mathbb{T}}I_t^{1-\alpha}u\|_{L_\Delta^1}+\int_{[t_0,t)_\mathbb{T}}
  \vert({_a^\mathbb{T}}D_t^{\alpha}u)(s)\vert\Delta s\\
  \leq&\frac{\vert\theta_i\vert}{b-a}
  \|{_a^\mathbb{T}}I_t^{1-\alpha}u\|_{L_\Delta^1}
  +\|{_a^\mathbb{T}}D_t^{\alpha}u\|_{L_\Delta^1}\\
  \leq&\frac{\vert\theta_i\vert}{b-a}\frac{(b-a)^{1-\alpha}}{\Gamma(2-\alpha)}
  \|u\|_{L_\Delta^1}+\|{_a^\mathbb{T}}D_t^{\alpha}u\|_{L_\Delta^1}\\
  \end{align*}
  for $t\in J$. In particular,
  \begin{eqnarray*}
  \vert({_a^\mathbb{T}}I_t^{1-\alpha}u)^i(a)\vert\leq\frac{\vert\theta_i\vert}{b-a}\frac{(b-a)^{1-\alpha}}
  {\Gamma(2-\alpha)}\|u\|_{L_\Delta^1}+\|{_a^\mathbb{T}}D_t^{\alpha}u\|_{L_\Delta^1}.
  \end{eqnarray*}
  So,
  \begin{align*}
  \vert({_a^\mathbb{T}}I_t^{1-\alpha}u)(a)\vert
  \leq&N\bigg(\frac{\vert\theta\vert(b-a)^{-\alpha}}
  {\Gamma(2-\alpha)}+1\bigg)\left(\|u\|_{L_\Delta^1}
  +\|{_a^\mathbb{T}}D_t^{\alpha}u\|_{L_\Delta^1}\right)\\
  \leq& NM_{\alpha,0}(b-a)^{\frac{p-1}{p}}\left(\|u\|_{L_\Delta^p}
  +\|{_a^\mathbb{T}}D_t^{\alpha}u\|_{L_\Delta^p}\right),
  \end{align*}
  where $\vert\theta\vert=\max\limits_{i\in\{1,2,\ldots,N\}}\vert\theta_i\vert$ and $M_{\alpha,0}=\frac{\vert\theta\vert(b-a)^{-\alpha}}{\Gamma(2-\alpha)}+1$. Thus,
  \begin{align*}
  \vert({_a^\mathbb{T}}I_t^{1-\alpha}u)(a)\vert^p
  \leq&N^pM^p_{\alpha,0}(b-a)^{p-1}2^{p-1}\left(\|u\|^p_{L_\Delta^p}
  +\|{_a^\mathbb{T}}D_t^{\alpha}u\|^p_{L_\Delta^p}\right),
  \end{align*}
  and, consequently,
  \begin{align*}
  \|u\|_{a,W_{\Delta,a^+}^{\alpha,p}}^p
  =&\vert{_a^\mathbb{T}I_t^{1-\alpha}}u(a)\vert^p+\|_a^\mathbb{T}D_t^\alpha u\|_{L_\Delta^p}^p\\
  \leq&\bigg(N^pM^p_{\alpha,0}(b-a)^{p-1}2^{p-1}+1\bigg)\left(\|u\|^p_{L_\Delta^p}
  +\|{_a^\mathbb{T}}D_t^{\alpha}u\|^p_{L_\Delta^p}\right)\\
  =&M_{\alpha,1}\|u\|_{W_{\Delta,a^+}^{\alpha,p}}^p,
  \end{align*}
  where $M_{\alpha,1}=N^pM^p_{\alpha,0}(b-a)^{p-1}2^{p-1}+1$.

  If $({_a^\mathbb{T}}I_t^{1-\alpha}u)^i(t_0)=0$ for $i$ belongs to some subset of   $\{1,2,\ldots,N\}$,
  from the above argument process one can easily see that there exists a constant $M_{\alpha,1}$ such that \eqref{st} holds.

  $(2)$
  When $(1-\alpha)p\geq1$, then (Remark \ref{29}) $W_{\Delta,a^+}^{\alpha,p}=AC_{\Delta,a^+}^{\alpha,p}\cap L_\Delta^p$ is the set of all functions belong to $AC_{\Delta,a^+}^{\alpha,p}$ that satisfy the condition $({_a^\mathbb{T}}I_t^{1-\alpha}u)(a)=0$. Hence, in the same way as in the case of $(1-\alpha)p<1$ (putting $c=0$), we obtain the inequality
  \begin{eqnarray*}
  \|u\|_{W_{\Delta,a^+}^{\alpha,p}}^p\leq L_{\alpha,1}\|u\|_{a,W_{\Delta,a^+}^{\alpha,p}}^p,\quad \mathrm{with}\,\,\mathrm{some}\,\,L_{\alpha,1}>0.
  \end{eqnarray*}
  The inequality
  \begin{eqnarray*}
  \|u\|_{a,W_{\Delta,a^+}^{\alpha,p}}^p\leq M_{\alpha,1}\|u\|_{W_{\Delta,a^+}^{\alpha,p}}^p,\quad \mathrm{with}\,\,\mathrm{some}\,\,M_{\alpha,1}>0
  \end{eqnarray*}
  is obvious (it is sufficient to put $M_{\alpha,1}=1$ and use the fact that $({_a^\mathbb{T}}I_t^{1-\alpha}u)(a)=0$).

The proof is complete.
\end{proof}

Now, we are in a position to prove some basic properties of the  space $W_{\Delta,a^+}^{\alpha,p}$.
\begin{theorem}\label{3.6}
The space $W_{\Delta,a^+}^{\alpha,p}$ is complete with respect to each of the norms $\|\cdot\|_{W_{\Delta,a^+}^{\alpha,p}}$ and $\|\cdot\|_{a, W_{\Delta,a^+}^{\alpha,p}}$ for any $0<\alpha\leq1$, $1\leq p<\infty$.
\end{theorem}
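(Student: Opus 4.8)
The plan is to reduce the statement to a single norm and then run the classical Sobolev completeness argument adapted to the time scale setting. First, by Theorem \ref{thm35} the two norms $\|\cdot\|_{W_{\Delta,a^+}^{\alpha,p}}$ and $\|\cdot\|_{a,W_{\Delta,a^+}^{\alpha,p}}$ are equivalent on $W_{\Delta,a^+}^{\alpha,p}$. Equivalent norms determine the same Cauchy sequences and the same limits, so a space is complete with respect to one of them if and only if it is complete with respect to the other. Hence it suffices to establish completeness for the norm $\|\cdot\|_{W_{\Delta,a^+}^{\alpha,p}}$, whose $p$-th power is $\|u\|_{L_\Delta^p}^p+\|{_a^\mathbb{T}}D_t^\alpha u\|_{L_\Delta^p}^p$.

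Next I would take an arbitrary Cauchy sequence $\{u_n\}$ in $(W_{\Delta,a^+}^{\alpha,p},\|\cdot\|_{W_{\Delta,a^+}^{\alpha,p}})$. From the very shape of the norm, both $\{u_n\}$ and $\{{_a^\mathbb{T}}D_t^\alpha u_n\}$ are Cauchy sequences in $L_\Delta^p$. Since $L_\Delta^p(J^0)$ is a Banach space by Theorem \ref{15}, there exist functions $u,g\in L_\Delta^p$ with
\begin{eqnarray*}
\|u_n-u\|_{L_\Delta^p}\to 0,\qquad \|{_a^\mathbb{T}}D_t^\alpha u_n-g\|_{L_\Delta^p}\to 0.
\end{eqnarray*}
The crux of the proof is then to show that $u\in W_{\Delta,a^+}^{\alpha,p}$ and that $g$ is precisely the weak (equivalently, by Theorem \ref{28}, the Riemann--Liouville) fractional derivative of $u$.

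To identify the limit I would use the weak formulation of Theorem \ref{3.4}: for every $n$ and every $\varphi\in C_{c,rd}^\infty$,
\begin{eqnarray*}
\int_{J^0}u_n(t)\,{_t^\mathbb{T}}D_b^{\alpha}\varphi(t)\Delta t=\int_{J^0}({_a^\mathbb{T}}D_t^\alpha u_n)(t)\,\varphi(t)\Delta t.
\end{eqnarray*}
I would pass to the limit $n\to\infty$ on both sides. Fixing the test function $\varphi$, the function ${_t^\mathbb{T}}D_b^\alpha\varphi$ is a fixed element of $L_\Delta^{p'}$ (with $\tfrac1p+\tfrac1{p'}=1$) because $\varphi$ is smooth with compact support inside $J^0$, and $\varphi$ itself lies in $L_\Delta^{p'}$; Hölder's inequality (Proposition \ref{17}) then yields the two estimates
\begin{eqnarray*}
\left|\int_{J^0}(u_n-u)\,{_t^\mathbb{T}}D_b^{\alpha}\varphi\,\Delta t\right|\leq\|u_n-u\|_{L_\Delta^p}\,\|{_t^\mathbb{T}}D_b^\alpha\varphi\|_{L_\Delta^{p'}},
\end{eqnarray*}
and the analogous bound $\|{_a^\mathbb{T}}D_t^\alpha u_n-g\|_{L_\Delta^p}\,\|\varphi\|_{L_\Delta^{p'}}$ for the right-hand side, both of which tend to zero. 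Consequently the identity survives in the limit, giving $\int_{J^0}u\,{_t^\mathbb{T}}D_b^{\alpha}\varphi\,\Delta t=\int_{J^0}g\,\varphi\,\Delta t$ for all $\varphi\in C_{c,rd}^\infty$. By Theorem \ref{3.4} this means exactly that $u\in W_{\Delta,a^+}^{\alpha,p}$ with ${_a^\mathbb{T}}D_t^\alpha u=g$. Therefore $\|u_n-u\|_{W_{\Delta,a^+}^{\alpha,p}}^p=\|u_n-u\|_{L_\Delta^p}^p+\|{_a^\mathbb{T}}D_t^\alpha u_n-g\|_{L_\Delta^p}^p\to 0$, so the Cauchy sequence converges inside the space, proving completeness for $\|\cdot\|_{W_{\Delta,a^+}^{\alpha,p}}$; the norm equivalence from Theorem \ref{thm35} transfers this to $\|\cdot\|_{a,W_{\Delta,a^+}^{\alpha,p}}$.

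I expect the main obstacle to be the identification step rather than the extraction of $L_\Delta^p$ limits: one must justify that the $L_\Delta^p$-limit $g$ of the derivatives really is the weak fractional derivative of the $L_\Delta^p$-limit $u$. This rests entirely on the continuity of the bilinear pairing against a fixed test function in the $L_\Delta^p$ topology (hence on the integration-by-parts identity of Theorem \ref{12} packaged as Theorem \ref{3.4}) together with the fact that ${_t^\mathbb{T}}D_b^\alpha\varphi\in L_\Delta^{p'}$; once these are in place, the passage to the limit and the appeal to the characterization theorem close the argument.
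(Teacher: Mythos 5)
Your proof is correct, but it takes a genuinely different route from the paper's. The paper works with the equivalent norm $\|\cdot\|_{a,W_{\Delta,a^+}^{\alpha,p}}$ and exploits the representation $W_{\Delta,a^+}^{\alpha,p}=AC_{\Delta,a^+}^{\alpha,p}\cap L_\Delta^p$ (Theorems \ref{23} and \ref{thm32}): a Cauchy sequence in that norm yields a Cauchy sequence of constants $\{{_a^\mathbb{T}}I_t^{1-\alpha}u_k(a)\}$ in $\mathbb{R}^N$ and of derivatives in $L_\Delta^p$, and the limit is then written down \emph{explicitly} as $u(t)=\frac{c}{\Gamma(\alpha)}(t-a)^{\alpha-1}+{_a^\mathbb{T}}I_t^\alpha\varphi(t)$, which manifestly lies in the space. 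You instead work directly with $\|\cdot\|_{W_{\Delta,a^+}^{\alpha,p}}$, extract $L_\Delta^p$ limits $u$ and $g$, and identify $g$ as the weak fractional derivative of $u$ by passing to the limit in the duality pairing of Definition \ref{def1}/Theorem \ref{3.4} — the classical ``closedness of the weak-derivative graph'' argument. Your route is more self-contained in that it needs only the defining weak formulation, the completeness of $L_\Delta^p$ (Theorem \ref{15}), H\"older's inequality, and Theorem \ref{28} to equate the weak and Riemann--Liouville derivatives of the $u_n$; the paper's route buys an explicit formula for the limit but leans on the integral-representation machinery. The one point you should make explicit is that ${_t^\mathbb{T}}D_b^\alpha\varphi\in L_\Delta^{p'}$ for $\varphi\in C_{c,rd}^\infty$; this is implicitly assumed throughout the paper (otherwise the pairing in Definition \ref{def1} would not be defined), so it is a fair hypothesis to invoke, but it is the only ingredient of your limiting step that is not literally quoted from a stated result.
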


\begin{proof}In view of Theorem \ref{thm35}, we only need to  show that $W_{\Delta,a^+}^{\alpha,p}$ with the norm $\|\cdot\|_{a, W_{\Delta,a^+}^{\alpha,p}}$ is complete. Let $\{u_k\}\subset W_{\Delta,a^+}^{\alpha,p}$ be a Cauchy sequence with respect to this norm. So, the sequences $\{{_a^\mathbb{T}}I_t^{1-\alpha}u_k(a)\}$  and $\{_a^\mathbb{T}D_t^\alpha u_k\}$ are   Cauchy sequences in   $\mathbb{R}^N$ and $L_\Delta^p$, respectively.

Let $c\in\mathbb{R}^N$ and $\varphi\in L_\Delta^p$ be the limits of the above two sequences in $\mathbb{R}^N$ and $L_\Delta^p$, respectively. Then the function
\begin{eqnarray*}
u(t)=\frac{c}{\Gamma(\alpha)}{(t-a)^{\alpha-1}}+{_a^\mathbb{T}}I_t^\alpha\varphi(t),\quad t\in J\quad\Delta-a.e.
\end{eqnarray*}
belongs to $W_{\Delta,a^+}^{\alpha,p}$ and it is the limit of $\{u_k\}$ in $W_{\Delta,a^+}^{\alpha,p}$ with respect to $\|\cdot\|_{a, W_{\Delta,a^+}^{\alpha,p}}$. The proof is complete.
\end{proof}

The proof method of the following two theorems is inspired by the method used in the proof of Proposition 8.1 $(b), (c)$ in \cite{9}.
\begin{theorem}\label{31}
The space $W_{\Delta,a^+}^{\alpha,p}$ is reflexive with respect to the norm $\|\cdot\|_{W_{\Delta,a^+}^{\alpha,p}}$ for any $0<\alpha\leq1$ and $1<p<\infty$.
\end{theorem}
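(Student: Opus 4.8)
The plan is to realize $W_{\Delta,a^+}^{\alpha,p}$ as (isometric to) a closed subspace of a reflexive product space, and then invoke the classical fact that a closed subspace of a reflexive Banach space is itself reflexive. This is exactly the transfer-of-structure technique behind Proposition 8.1 in \cite{9}.

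First I would record that for $1<p<\infty$ the Lebesgue delta space $L_\Delta^p(J^0)$ is reflexive. This is standard: by Theorem \ref{15} it is a Banach space, and the pairing $(f,g)\mapsto\int_{J^0}f\cdot g\,\Delta t$ together with H\"older's inequality (Proposition \ref{17}) identifies its dual with $L_\Delta^{p'}$, where $\frac1p+\frac1{p'}=1$; the usual uniform-convexity argument for $L^p$ then yields reflexivity. Consequently the product space $E:=L_\Delta^p\times L_\Delta^p$, endowed with the norm $\|(v,w)\|_E=\big(\|v\|_{L_\Delta^p}^p+\|w\|_{L_\Delta^p}^p\big)^{1/p}$, is reflexive as a finite Cartesian product of reflexive spaces.

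Next I would introduce the linear operator $T:W_{\Delta,a^+}^{\alpha,p}\to E$ defined by $T(u)=\big(u,\,{_a^\mathbb{T}}D_t^\alpha u\big)$, where ${_a^\mathbb{T}}D_t^\alpha u$ is the left Riemann$-$Liouville derivative of $u$ (which, by Theorem \ref{28}, coincides $\Delta$-a.e.\ with the weak fractional derivative). By the very definition of the norm $\|\cdot\|_{W_{\Delta,a^+}^{\alpha,p}}$ one has $\|T(u)\|_E=\|u\|_{W_{\Delta,a^+}^{\alpha,p}}$, so $T$ is a linear isometry of $W_{\Delta,a^+}^{\alpha,p}$ onto its image $T\big(W_{\Delta,a^+}^{\alpha,p}\big)\subset E$.

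The key step is to verify that $T\big(W_{\Delta,a^+}^{\alpha,p}\big)$ is closed in $E$. This is immediate from the completeness of $W_{\Delta,a^+}^{\alpha,p}$ established in Theorem \ref{3.6}: an isometry carries the complete space $W_{\Delta,a^+}^{\alpha,p}$ onto a complete, hence closed, subspace of $E$. Since a closed subspace of a reflexive Banach space is reflexive, $T\big(W_{\Delta,a^+}^{\alpha,p}\big)$ is reflexive, and reflexivity is preserved under isometric isomorphism, so $W_{\Delta,a^+}^{\alpha,p}$ is reflexive with respect to $\|\cdot\|_{W_{\Delta,a^+}^{\alpha,p}}$. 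I expect the only genuinely delicate ingredient to be the reflexivity of $L_\Delta^p$ on time scales itself---everything after that is a formal transfer of structure---so I would make certain that the duality $(L_\Delta^p)^*\cong L_\Delta^{p'}$ underlying Proposition \ref{17} is invoked in full.
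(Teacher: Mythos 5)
Your proposal is correct and follows essentially the same route as the paper: both embed $W_{\Delta,a^+}^{\alpha,p}$ isometrically into $L_\Delta^p\times L_\Delta^p$ via $u\mapsto(u,\,{_a^\mathbb{T}}D_t^\alpha u)$ and conclude via reflexivity of closed subspaces of reflexive spaces. If anything, your version is slightly more careful, since you justify closedness of the image by appealing to the completeness result (Theorem \ref{3.6}) rather than merely asserting it.
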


\begin{proof}
Let us consider $W_{\Delta,a^+}^{\alpha,p}$ with the norm $\|\cdot\|_{W_{\Delta,a^+}^{\alpha,p}}$ and define a mapping
\begin{eqnarray*}
\lambda:W_{\Delta,a^+}^{\alpha,p}\ni u\mapsto\left(u,\, _a^\mathbb{T}D_t^\alpha u\right)\in L_\Delta^p\times L_\Delta^p.
\end{eqnarray*}
It is obvious that
\begin{eqnarray*}
\|u\|_{W_{\Delta,a^+}^{\alpha,p}}=\|\lambda u\|_{L_\Delta^p\times L_\Delta^p},
\end{eqnarray*}
where
\begin{eqnarray*}
\|\lambda u\|_{L_\Delta^p\times L_\Delta^p}=\bigg(\sum_{i=1}^2\|(\lambda u)_i\|_{L_\Delta^p}^p\bigg)^{\frac{1}{p}},\quad \lambda u=\left(u,\, _a^\mathbb{T}D_t^\alpha u\right)\in L_\Delta^p\times L_\Delta^p,
\end{eqnarray*}
which means that the operator $\lambda:u\mapsto\left(u,\, _a^\mathbb{T}D_t^\alpha u\right)$ is a isometric isomorphic mapping and the space $W_{\Delta,a^+}^{\alpha,p}$ is isometric isomorphic to the space $\Omega=\bigg\{\left(u,\, _a^\mathbb{T}D_t^\alpha u\right):\forall u\in W_{\Delta,a^+}^{\alpha,p}\bigg\}$, which is a closed subset of $L_\Delta^p\times L_\Delta^p$ as $W_{\Delta,a^+}^{\alpha,p}$ is closed.

Since $L_\Delta^p$ is reflexive, the Cartesian product space $L_\Delta^p\times L_\Delta^p$ is also a reflexive space with respect to the norm $\|v\|_{L_\Delta^p\times L_\Delta^p}=\bigg(\sum\limits_{i=1}^2\|v_i\|_{L_\Delta^p}^p\bigg)^{\frac{1}{p}}$, where $v=\left(v_1,\, v_2\right)\in L_\Delta^p\times L_\Delta^p$.

Thus, $W_{\Delta,a^+}^{\alpha,p}$ is reflexive with respect to the norm $\|\cdot\|_{W_{\Delta,a^+}^{\alpha,p}}$. The proof is complete.
\end{proof}

\begin{theorem}
The space $W_{\Delta,a^+}^{\alpha,p}$ is separable with respect to the norm $\|\cdot\|_{W_{\Delta,a^+}^{\alpha,p}}$ for any $0<\alpha\leq1$ and $1\leq p<\infty$.
\end{theorem}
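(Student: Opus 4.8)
The plan is to mirror the isometric-isomorphism argument used in the proof of the preceding reflexivity theorem. First I would reuse the map
\[
\lambda:W_{\Delta,a^+}^{\alpha,p}\ni u\mapsto\left(u,\,{}_a^\mathbb{T}D_t^\alpha u\right)\in L_\Delta^p\times L_\Delta^p,
\]
which is an isometric isomorphism of $W_{\Delta,a^+}^{\alpha,p}$ onto the set $\Omega=\{(u,\,{}_a^\mathbb{T}D_t^\alpha u):u\in W_{\Delta,a^+}^{\alpha,p}\}\subset L_\Delta^p\times L_\Delta^p$. Since an isometric isomorphism is in particular a homeomorphism, separability transfers between $W_{\Delta,a^+}^{\alpha,p}$ and $\Omega$, so it suffices to prove that $\Omega$ is separable.

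Next I would invoke the separability of $L_\Delta^p$ for $1\le p<\infty$, which is a known property of the Lebesgue delta spaces built in \cite{7}. A finite Cartesian product of separable metric spaces is separable: taking a countable dense subset $D\subset L_\Delta^p$, the countable family $D\times D$ is dense in $L_\Delta^p\times L_\Delta^p$ with respect to the norm $\|v\|_{L_\Delta^p\times L_\Delta^p}=\big(\sum_{i=1}^2\|v_i\|_{L_\Delta^p}^p\big)^{1/p}$. Hence $L_\Delta^p\times L_\Delta^p$ is separable.

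Finally I would apply the elementary fact that every subset of a separable metric space is itself separable in the induced metric. Applied to $\Omega\subset L_\Delta^p\times L_\Delta^p$, this yields separability of $\Omega$, and therefore, through the homeomorphism $\lambda$, separability of $W_{\Delta,a^+}^{\alpha,p}$ with respect to $\|\cdot\|_{W_{\Delta,a^+}^{\alpha,p}}$.

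I do not anticipate a genuine obstacle here: the whole difficulty is concentrated in the auxiliary input that $L_\Delta^p$ is separable, which lies outside this statement and is simply quoted. The only point worth flagging is that, in contrast with the reflexivity argument, one does \emph{not} need $\Omega$ to be closed, since the hereditary property of separability holds for arbitrary subsets of a separable metric space; thus completeness of $W_{\Delta,a^+}^{\alpha,p}$ (Theorem \ref{3.6}) plays no role in this proof.
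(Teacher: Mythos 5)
Your proof is correct and follows essentially the same route as the paper: transfer separability through the isometry $\lambda$ onto a subset of the separable space $L_\Delta^p\times L_\Delta^p$, using the hereditary property of separability for subsets of separable metric spaces. Your added observation that closedness of $\lambda(W_{\Delta,a^+}^{\alpha,p})$ is not needed here (unlike in the reflexivity argument) is accurate and slightly sharpens the exposition, but the substance matches the paper's proof.
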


\begin{proof}
Let us consider $W_{\Delta,a^+}^{\alpha,p}$ with the norm $\|\cdot\|_{W_{\Delta,a^+}^{\alpha,p}}$ and the mapping $\lambda$ defined in the proof of Theorem \ref{31}. Obviously, $\lambda(W_{\Delta,a^+}^{\alpha,p})$ is separable as a subset of separable space $L_\Delta^p\times L_\Delta^p$. Since $\lambda$ is the isometry, $W_{\Delta,a^+}^{\alpha,p}$ is also separable with respect to the norm $\|\cdot\|_{W_{\Delta,a^+}^{\alpha,p}}$. The proof is complete.
\end{proof}

\begin{proposition}\label{034}
Let $0<\alpha\leq1$ and $1<p<\infty$. For all $u\in W_{\Delta,a^+}^{\alpha,p}$, if $1-\alpha\geq\frac{1}{p}$ or $\alpha>\frac{1}{p}$, then
\begin{eqnarray}\label{33}
\|u\|_{L_\Delta^p}\leq\frac{b^\alpha}{\Gamma(\alpha+1)}\left\|_a^\mathbb{T}D_t^\alpha u\right\|_{L_\Delta^p};
\end{eqnarray}
if $\alpha>\frac{1}{p}$ and $\frac{1}{p}+\frac{1}{q}=1$, then
\begin{eqnarray}\label{34}
\|u\|_\infty\leq\frac{b^{\alpha-\frac{1}{p}}}{\Gamma(\alpha)((\alpha-1)q+1)^{\frac{1}{q}}}
\left\|_a^\mathbb{T}D_t^\alpha u\right\|_{L_\Delta^p}.
\end{eqnarray}
\end{proposition}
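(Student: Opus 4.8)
The plan is to reduce both inequalities to a single clean form of $u$ and then invoke the mapping properties of the fractional integral. First I would apply Theorem \ref{23} (together with Remark \ref{re31}) to represent every $u\in W_{\Delta,a^+}^{\alpha,p}$ as
\begin{eqnarray*}
u(t)=\frac{1}{\Gamma(\alpha)}\frac{c}{(t-a)^{1-\alpha}}+{_a^\mathbb{T}}I_t^\alpha\varphi(t),\quad t\in J\ \Delta\text{-a.e.},
\end{eqnarray*}
where $\varphi={_a^\mathbb{T}}D_t^\alpha u\in L_\Delta^p$ and $c={_a^\mathbb{T}}I_t^{1-\alpha}u(a)$. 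The function of the hypotheses on $\alpha$ and $p$ is precisely to suppress the singular first term: when $(1-\alpha)p\geq1$, i.e. $1-\alpha\geq\frac1p$, Remark \ref{29}$(2)$ gives $c=0$ outright; in the complementary range $\alpha>\frac1p$ the unbounded contribution $\frac{c}{\Gamma(\alpha)}(t-a)^{\alpha-1}$ cannot coexist with an $L^\infty$ (hence an $L_\Delta^p$) bound, forcing $c=0$ as well. Thus under either hypothesis one has $u={_a^\mathbb{T}}I_t^\alpha\varphi$ $\Delta$-a.e., and both estimates become estimates on the fractional integral operator acting on $\varphi={_a^\mathbb{T}}D_t^\alpha u$.

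For the $L_\Delta^p$-estimate (\ref{33}) I would then apply Lemma \ref{30} with $t=b$ directly to $u={_a^\mathbb{T}}I_t^\alpha\varphi$, giving $\|u\|_{L_\Delta^p}\leq\frac{(b-a)^\alpha}{\Gamma(\alpha+1)}\|\varphi\|_{L_\Delta^p}$. Since $0<a<b$ forces $(b-a)^\alpha\leq b^\alpha$, and $\varphi={_a^\mathbb{T}}D_t^\alpha u$, inequality (\ref{33}) follows at once. This half is essentially immediate once the representation is in place.

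For the $L^\infty$-estimate (\ref{34}) I would fix $t\in J$, write $u(t)=\frac1{\Gamma(\alpha)}\int_a^t(t-\sigma(s))^{\alpha-1}\varphi(s)\Delta s$, and apply Hölder's inequality on time scales (Proposition \ref{17}) with exponents $p,q$:
\begin{eqnarray*}
|u(t)|\leq\frac1{\Gamma(\alpha)}\left(\int_a^t(t-\sigma(s))^{(\alpha-1)q}\Delta s\right)^{\frac1q}\|\varphi\|_{L_\Delta^p([a,t]_\mathbb{T})}.
\end{eqnarray*}
The condition $\alpha>\frac1p$ is exactly $(\alpha-1)q+1>0$, which makes the kernel $\Delta$-integrable. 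Bounding the time-scale integral of $(t-\sigma(s))^{(\alpha-1)q}$ by the Riemann integral $\int_a^t(t-s)^{(\alpha-1)q}\,ds=\frac{(t-a)^{(\alpha-1)q+1}}{(\alpha-1)q+1}$ through Proposition \ref{2}, and using the identity $\frac{(\alpha-1)q+1}{q}=\alpha-\frac1p$, I obtain $|u(t)|\leq\frac{(t-a)^{\alpha-1/p}}{\Gamma(\alpha)((\alpha-1)q+1)^{1/q}}\|\varphi\|_{L_\Delta^p}$. Taking the supremum over $t\in J$ and estimating $(t-a)^{\alpha-1/p}\leq b^{\alpha-1/p}$ then yields (\ref{34}).

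The step I expect to be the main obstacle is the kernel comparison $\int_a^t(t-\sigma(s))^{(\alpha-1)q}\Delta s\leq\int_a^t(t-s)^{(\alpha-1)q}\,ds$. Because the forward jump satisfies $\sigma(s)\geq s$ and the exponent $(\alpha-1)q$ is negative, the integrand $(t-\sigma(s))^{(\alpha-1)q}$ is pointwise \emph{larger} than $(t-s)^{(\alpha-1)q}$, so the desired bound cannot be read off pointwise; it must instead be extracted from the monotone-extension inequality of Proposition \ref{2} combined with the convolution symmetry (Theorem \ref{LTm4}), exactly as in the $1<p<\infty$ part of the proof of Lemma \ref{30}. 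Once this delta-to-Riemann comparison is justified, the remaining manipulations are the routine substitutions indicated above.
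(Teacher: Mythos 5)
Your overall route is the same as the paper's: reduce both inequalities to bounds on ${_a^\mathbb{T}}I_t^\alpha({_a^\mathbb{T}}D_t^\alpha u)$, obtain (\ref{33}) from Lemma \ref{30} with $t=b$, and obtain (\ref{34}) by H\"older's inequality together with the comparison $\int_a^t(t-\sigma(s))^{(\alpha-1)q}\Delta s\leq\int_a^t(t-s)^{(\alpha-1)q}\,ds$ supplied by Proposition \ref{2}. The paper performs exactly these two computations; it phrases the reduction as ``in view of Remark \ref{29} and Theorem \ref{7}'' rather than through the explicit representation of Theorem \ref{23}, but the substance is identical, and your remark that the kernel comparison is not a pointwise inequality (since $\sigma(s)\geq s$ and the exponent is negative) and must come from the monotone-extension mechanism is a fair reading of what the paper actually does here and in Lemma \ref{30}.

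The one step where your write-up asserts something false is the claim that, in the regime $\alpha>\frac{1}{p}$ with $(1-\alpha)p<1$, the singular term $\frac{c}{\Gamma(\alpha)}(t-a)^{\alpha-1}$ ``cannot coexist with an $L_\Delta^p$ bound, forcing $c=0$.'' It can: $(t-a)^{(\alpha-1)p}$ is integrable precisely when $(1-\alpha)p<1$, which is why Remark \ref{29}$(1)$ states $AC_{\Delta,a^+}^{\alpha,p}\subset L_\Delta^p$ in that range, and $u$ is only assumed to lie in $L_\Delta^p$, not in $L^\infty$. So for $\alpha>\frac{1}{p}$ and $(1-\alpha)p<1$ the constant $c={_a^\mathbb{T}}I_t^{1-\alpha}u(a)$ need not vanish, and the identity $u={_a^\mathbb{T}}I_t^\alpha({_a^\mathbb{T}}D_t^\alpha u)$ genuinely requires the extra condition $c=0$, i.e.\ the hypothesis of Theorem \ref{thm21} on which Theorem \ref{7} rests. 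To be fair, the paper's own proof has the identical gap --- it invokes Theorem \ref{7} without verifying that hypothesis in this sub-case --- so your argument is no weaker than the published one; but the justification you offer for $c=0$ is not valid and should either be replaced by an explicit assumption $({_a^\mathbb{T}}I_t^{1-\alpha}u)(a)=0$ or confined to the case $1-\alpha\geq\frac{1}{p}$, where Remark \ref{29}$(2)$ really does give $c=0$.
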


\begin{proof}
In view of Remark \ref{29} and Theorem \ref{7}, in order to prove inequalities $(\ref{33})$ and $(\ref{34})$, we only need to prove that
\begin{eqnarray}\label{35}
\left\|_a^\mathbb{T}I_t^\alpha(_a^\mathbb{T}D_t^\alpha u) \right\|_{L_\Delta^p}\leq\frac{b^\alpha}{\Gamma(\alpha+1)}\left\|_a^\mathbb{T}D_t^\alpha u\right\|_{L_\Delta^p}
\end{eqnarray}
for $1-\alpha\geq\frac{1}{p}$ or $\alpha>\frac{1}{p}$, and that
\begin{eqnarray}\label{36}
\left\|_a^\mathbb{T}I_t^\alpha(_a^\mathbb{T}D_t^\alpha u) \right\|_\infty\leq\frac{b^{\alpha-\frac{1}{p}}}{\Gamma(\alpha)((\alpha-1)q+1)
^{\frac{1}{q}}}\left\|_a^\mathbb{T}D_t^\alpha u\right\|_{L_\Delta^p}
\end{eqnarray}
for $\alpha>\frac{1}{p}$ and $\frac{1}{p}+\frac{1}{q}=1$.

Note that $_a^\mathbb{T}D_t^\alpha u\in L_\Delta^p(J, \mathbb{R}^N)$, the inequality $(\ref{35})$ follows from Lemma \ref{30} directly.

We are now in a position to prove $(\ref{36})$. For $\alpha>\frac{1}{p}$, choose $q$ such that $\frac{1}{p}+\frac{1}{q}=1$. For all $u\in W_{\Delta,a^+}^{\alpha,p}$, since $(t-s)^{(\alpha-1)q}$ is an increasing monotone function, by using Proposition \ref{2}, we find that $\int_a^t(t-\sigma(s))^{(\alpha-1)q}\Delta s\leq \int_a^t(t-s)^{(\alpha-1)q}ds$. Taking into account of Proposition \ref{17},  we have
\begin{align*}
\left\vert{_a^\mathbb{T}I_t^\alpha(_a^\mathbb{T}D_t^\alpha} u(t))\right\vert
=&\frac{1}{\Gamma(\alpha)}\bigg\vert\int_a^t(t-\sigma(s))^{\alpha-1}{_a^\mathbb{T}}D_t^\alpha u(s)\Delta s\bigg\vert\\
\leq&\frac{1}{\Gamma(\alpha)}\bigg(\int_a^t(t-\sigma(s))^{(\alpha-1)q}\Delta s\bigg)^{\frac{1}{q}}\|{_a^\mathbb{T}}D_t^\alpha u\|_{L_\Delta^p}\\
\leq&\frac{1}{\Gamma(\alpha)}\bigg(\int_a^t(t-s)^{(\alpha-1)q}ds\bigg)
^{\frac{1}{q}}\|{_a^\mathbb{T}}D_t^\alpha u\|_{L_\Delta^p}\\
\leq&\frac{b^{\frac{1}{q}+\alpha-1}}{\Gamma(\alpha)((\alpha-1)q+1)
^{\frac{1}{q}}}\left\|_a^\mathbb{T}D_t^\alpha u\right\|_{L_\Delta^p}\\
=&\frac{b^{\alpha-\frac{1}{p}}}{\Gamma(\alpha)((\alpha-1)q+1)
^{\frac{1}{q}}}\left\|_a^\mathbb{T}D_t^\alpha u\right\|_{L_\Delta^p}.
\end{align*}
The proof is complete.
\end{proof}

\begin{remark}
\begin{itemize}
  \item [$(i)$]
  According to $(\ref{33})$, we can consider $W_{\Delta,a^+}^{\alpha,p}$ with respect to the norm
  \begin{eqnarray}\label{37}
  \|u\|_{W_{\Delta,a^+}^{\alpha,p}}^p=\|_a^\mathbb{T}D_t^\alpha u\|_{L_\Delta^p}^p=\bigg(\int_{J^0}\left\vert{_a^\mathbb{T}D_t^\alpha} u(t)\right\vert^p\Delta t\bigg)^{\frac{1}{p}}
  \end{eqnarray}
  in the following analysis.
  \item [$(ii)$]
  It follows from $(\ref{33})$ and $(\ref{34})$ that $W_{\Delta,a^+}^{\alpha,p}$ is continuously immersed into $C(J, \mathbb{R}^N)$ with the natural norm $\|\cdot\|_\infty$.
\end{itemize}
\end{remark}

\begin{proposition}\label{39}
Let $0<\alpha\leq1$ and $1<p<\infty$. Assume that $\alpha>\frac{1}{p}$ and the sequence $\{u_k\}\subset W_{\Delta,a^+}^{\alpha,p}$ converges weakly to $u$ in $W_{\Delta,a^+}^{\alpha,p}$. Then, $u_k\rightarrow u$ in $C(J, \mathbb{R}^N)$, i.e., $\|u-u_k\|_\infty=0$, as $k\rightarrow\infty$.
\end{proposition}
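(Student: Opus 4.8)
The plan is to deduce the result from a compact-embedding property: I will show that any norm-bounded sequence in $W_{\Delta,a^+}^{\alpha,p}$ is relatively compact in $C(J,\mathbb{R}^N)$, and that for each fixed $t_0\in J$ the point evaluation $u\mapsto u(t_0)$ is continuous on $W_{\Delta,a^+}^{\alpha,p}$. The weak limit $u$ will then be identified as the uniform limit by a standard subsequence argument. First I would note that a weakly convergent sequence is norm-bounded, so by the uniform boundedness principle there is $R>0$ with $\|u_k\|_{W_{\Delta,a^+}^{\alpha,p}}\le R$ for all $k$. Since $\alpha>\frac{1}{p}$, we may work with the norm (\ref{37}), whence $\|{_a^\mathbb{T}}D_t^\alpha u_k\|_{L_\Delta^p}\le R$; setting $v_k:={_a^\mathbb{T}}D_t^\alpha u_k$ and invoking the reduction underlying Proposition \ref{034} (Theorem \ref{7}), each $u_k$ is represented as $u_k={_a^\mathbb{T}}I_t^\alpha v_k$.

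The core of the proof is to verify the two hypotheses of the Arzel\`a--Ascoli theorem (Lemma \ref{21}). Uniform boundedness in $C(J,\mathbb{R}^N)$ is immediate from (\ref{34}), which gives $\|u_k\|_\infty\le \frac{b^{\alpha-1/p}}{\Gamma(\alpha)((\alpha-1)q+1)^{1/q}}R$. For equicontinuity I would fix $a\le t_1<t_2\le b$ and split
\[
u_k(t_2)-u_k(t_1)=\frac{1}{\Gamma(\alpha)}\int_a^{t_1}\big[(t_2-\sigma(s))^{\alpha-1}-(t_1-\sigma(s))^{\alpha-1}\big]v_k(s)\Delta s+\frac{1}{\Gamma(\alpha)}\int_{t_1}^{t_2}(t_2-\sigma(s))^{\alpha-1}v_k(s)\Delta s,
\]
apply H\"older's inequality (Proposition \ref{17}) to each term, and convert the $\Delta$-integrals of the (monotone) kernels into ordinary Lebesgue integrals via Proposition \ref{2}. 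The aim is an estimate of the form $|u_k(t_2)-u_k(t_1)|\le C\,R\,|t_2-t_1|^{\alpha-\frac1p}$ with $C$ independent of $k$; because $\alpha-\frac1p>0$, this yields equicontinuity, and Lemma \ref{21} then gives relative compactness of $\{u_k\}$ in $C(J,\mathbb{R}^N)$.

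I expect the equicontinuity estimate to be the main obstacle: controlling the kernel difference $(t_2-\sigma(s))^{\alpha-1}-(t_1-\sigma(s))^{\alpha-1}$ uniformly in $k$, and carrying out the time-scale-to-Lebesgue comparison so that the surviving power of $|t_2-t_1|$ is exactly $\alpha-\frac1p$, is where the care is needed (this is the analogue of the classical Riemann--Liouville H\"older estimate adapted to $\Delta$-integration).

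Finally, since $\alpha>\frac1p$, estimate (\ref{34}) shows that for each fixed $t_0\in J$ one has $|u(t_0)|\le\|u\|_\infty\le C\|u\|_{W_{\Delta,a^+}^{\alpha,p}}$, so the linear functional $u\mapsto u(t_0)$ is continuous on $W_{\Delta,a^+}^{\alpha,p}$; hence the weak convergence of $\{u_k\}$ forces $u_k(t_0)\to u(t_0)$ for every $t_0\in J$, that is, $u_k\to u$ pointwise. To conclude, I take an arbitrary subsequence of $\{u_k\}$: by the relative compactness just established it has a further subsequence converging uniformly to some $w\in C(J,\mathbb{R}^N)$, and uniform convergence implies pointwise convergence, so $w=u$. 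Thus every subsequence of $\{u_k\}$ has a further subsequence converging uniformly to $u$, which forces the whole sequence to converge uniformly to $u$, i.e. $\|u-u_k\|_\infty\to0$ as $k\to\infty$. This would complete the proof.
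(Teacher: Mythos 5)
Your proposal is correct and follows essentially the same route as the paper: bound the sequence via Banach--Steinhaus, get uniform boundedness from (\ref{34}), establish equicontinuity by the split-kernel H\"older estimate with the $\Delta$-to-Lebesgue comparison of Proposition \ref{2} (yielding the exponent $\alpha-\frac1p$, exactly the paper's estimate (\ref{38})), apply Lemma \ref{21}, and identify the uniform limit with the weak limit. Your identification step via continuity of point evaluations is just a slightly more explicit version of the paper's ``$(C(J,\mathbb{R}^N))^*\subset(W_{\Delta,a^+}^{\alpha,p})^*$'' argument, so no substantive difference.
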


\begin{proof}
If $\alpha>\frac{1}{p}$, then by $(\ref{34})$ and $(\ref{37})$, the injection of $W_{\Delta,a^+}^{\alpha,p}$ into $C(J, \mathbb{R}^N)$, with its natural norm $\|\cdot\|_\infty$, is continuous, i.e., $u_k\rightarrow u$ in $W_{\Delta,a^+}^{\alpha,p}$, then $u_k\rightarrow u$ in $C(J, \mathbb{R}^N)$.

Since $u_k\rightharpoonup u$ in $W_{\Delta,a^+}^{\alpha,p}$, it follows that $u_k\rightharpoonup u$ in $C(J, \mathbb{R}^N)$. In fact, for any $h\in\left(C(J, \mathbb{R}^N)\right)^*$, if $u_k\rightarrow u$ in $W_{\Delta,a^+}^{\alpha,p}$, then $u_k\rightarrow u$ in $C(J, \mathbb{R}^N)$, and thus $h(u_k)\rightarrow h(u)$. Therefore, $h\in\left(W_{\Delta,a^+}^{\alpha,p}\right)^*$, which means that $\left(C(J, \mathbb{R}^N)\right)^*\subset\left(W_{\Delta,a^+}^{\alpha,p}\right)^*$. Hence, if $u_k\rightharpoonup u$ in $W_{\Delta,a^+}^{\alpha,p}$, then for any $h\in\left(C(J, \mathbb{R}^N)\right)^*$, we have $h\in\left(W_{\Delta,a^+}^{\alpha,p}\right)^*$, and thus $h(u_k)\rightarrow h(u)$, i.e., $u_k\rightharpoonup u$ in $C(J, \mathbb{R}^N)$.

By the Banach$-$Steinhaus theorem, $\{u_k\}$ is bounded in $W_{\Delta,a^+}^{\alpha,p}$ and, hence, in $C(J, \mathbb{R}^N)$. Now, we  prove that the sequence $\{u_k\}$ is equi$-$continuous. Let $\frac{1}{p}+\frac{1}{q}=1$ and $t_1, t_2\in J$ with $t_1\leq t_2$,for all $f\in L_\Delta^p(J, \mathbb{R}^N)$, by using Proposition \ref{17}, Proposition \ref{2}, Theorem \ref{20}, and noting $\alpha>\frac{1}{p}$, we have
{\setlength\arraycolsep{2pt}
\begin{eqnarray}\label{38}
&&\left\vert{_a^\mathbb{T}I_{t_1}^\alpha} f(t_1)-{_a^\mathbb{T}}I_{t_2}^\alpha f(t_2)\right\vert\nonumber\\
&=&\frac{1}{\Gamma(\alpha)}\bigg\vert\int_a^{t_1}(t_1-\sigma(s))^{\alpha-1}f(s)\Delta s-\int_a^{t_2}(t_2-\sigma(s))^{\alpha-1}f(s)\Delta s\bigg\vert\nonumber\\
&\leq&\frac{1}{\Gamma(\alpha)}\bigg\vert\int_a^{t_1}(t_1-\sigma(s))^{\alpha-1}f(s)\Delta s-\int_a^{t_1}(t_2-\sigma(s))^{\alpha-1}f(s)\Delta s\bigg\vert\nonumber\\
&&+\frac{1}{\Gamma(\alpha)}\bigg\vert\int_{t_1}^{t_2}(t_2-\sigma(s))^{\alpha-1}f(s)\Delta s\bigg\vert\nonumber\\
&\leq&\frac{1}{\Gamma(\alpha)}\int_a^{t_1}\left((t_1-\sigma(s))^{\alpha-1}
-(t_2-\sigma(s))^{\alpha-1}\right)\vert f(s)\vert\Delta s\nonumber\\
&&+\frac{1}{\Gamma(\alpha)}\int_{t_1}^{t_2}(t_2-\sigma(s))^{\alpha-1}\vert f(s)\vert\Delta s\nonumber\\
&\leq&\frac{1}{\Gamma(\alpha)}\bigg(\int_a^{t_1}\left((t_1-\sigma(s))^{\alpha-1}
-(t_2-\sigma(s))^{\alpha-1}\right)^q\Delta s\bigg)^{\frac{1}{q}}\|f\|_{L_\Delta^p}\nonumber\\
&&+\frac{1}{\Gamma(\alpha)}\bigg(\int_{t_1}^{t_2}(t_2-\sigma(s))^{(\alpha-1)q}\Delta s\bigg)^{\frac{1}{q}}\|f\|_{L_\Delta^p}\nonumber\\
&\leq&\frac{1}{\Gamma(\alpha)}\bigg(\int_a^{t_1}\left((t_1-\sigma(s))^{(\alpha-1)q}
-(t_2-\sigma(s))^{(\alpha-1)q}\right)\Delta s\bigg)^{\frac{1}{q}}\|f\|_{L_\Delta^p}\nonumber\\
&&+\frac{1}{\Gamma(\alpha)}\bigg(\int_{t_1}^{t_2}(t_2-\sigma(s))^{(\alpha-1)q}\Delta s\bigg)^{\frac{1}{q}}\|f\|_{L_\Delta^p}\nonumber\\
&\leq&\frac{1}{\Gamma(\alpha)}\bigg(\int_a^{t_1}\left((t_1-s)^{(\alpha-1)q}
-(t_2-s)^{(\alpha-1)q}\right)ds\bigg)^{\frac{1}{q}}\|f\|_{L_\Delta^p}\nonumber\\
&&+\frac{1}{\Gamma(\alpha)}\bigg(\int_{t_1}^{t_2}(t_2-s)^{(\alpha-1)q}ds\bigg)
^{\frac{1}{q}}\|f\|_{L_\Delta^p}\nonumber\\
&=&\frac{\|f\|_{L_\Delta^p}}{\Gamma(\alpha)\left(1+(\alpha-1)q\right)^{\frac{1}{q}}}
\bigg(t_1^{(\alpha-1)q+1}-t_2^{(\alpha-1)q+1}+(t_2-t_1)^{(\alpha-1)q+1}\bigg)
^{\frac{1}{q}}\nonumber\\
&&+\frac{\|f\|_{L_\Delta^p}}{\Gamma(\alpha)\left(1+(\alpha-1)q\right)^{\frac{1}{q}}}
\bigg((t_2-t_1)^{(\alpha-1)q+1}\bigg)^{\frac{1}{q}}\nonumber\\
&\leq&\frac{2\|f\|_{L_\Delta^p}}{\Gamma(\alpha)\left(1+(\alpha-1)q\right)^{\frac{1}{q}}}
(t_2-t_1)^{\alpha-1+\frac{1}{q}}\nonumber\\
&=&\frac{2\|f\|_{L_\Delta^p}}{\Gamma(\alpha)\left(1+(\alpha-1)q\right)^{\frac{1}{q}}}
(t_2-t_1)^{\alpha-\frac{1}{p}}.
\end{eqnarray}}
Therefore, the sequence $\{u_k\}$ is equi$-$continuous since, for $t_1, t_2\in J$, $t_1\leq t_2$, by applying $(\ref{38})$ and $(\ref{37})$, we have
\begin{align*}
\left\vert u_k(t_1)-u_k(t_2)\right\vert
=&\left\vert{_a^\mathbb{T}I_{t_1}^\alpha(_a^\mathbb{T}D_{t_1}^\alpha} u_k(t_1))-{_a^\mathbb{T}}I_{t_2}^\alpha(_a^\mathbb{T}D_{t_2}^\alpha u_k(t_2))\right\vert\\
\leq&\frac{2(t_2-t_1)^{\alpha-\frac{1}{p}}}{\Gamma(\alpha)\left(1+(\alpha-1)q\right)^{\frac{1}{q}}}
\|_a^\mathbb{T}D_t^\alpha u_k\|_{L_\Delta^p}\\
=&\frac{2(t_2-t_1)^{\alpha-\frac{1}{p}}}{\Gamma(\alpha)\left(1+(\alpha-1)q\right)^{\frac{1}{q}}}
\|_a^\mathbb{T}D_t^\alpha u_k\|_{L_\Delta^p}\\
\leq&\frac{2(t_2-t_1)^{\alpha-\frac{1}{p}}}{\Gamma(\alpha)((\alpha-1)q+1)
^{\frac{1}{q}}}\left\|_a^\mathbb{T}D_t^\alpha u\right\|_{L_\Delta^p}\\
=&\frac{2(t_2-t_1)^{\alpha-\frac{1}{p}}}{\Gamma(\alpha)((\alpha-1)q+1)
^{\frac{1}{q}}}\left\|u_k\right\|_{W_{\Delta,a^+}^{\alpha,p}}\\
\leq&C(t_2-t_1)^{\alpha-\frac{1}{p}},
\end{align*}
where $\frac{1}{p}+\frac{1}{q}=1$ and $C\in\mathbb{R}^+$ is a constant. By the Ascoli$-$Arzela theorem on time scales (Lemma \ref{21}),  $\{u_k\}$ is relatively compact in $C(J, \mathbb{R}^N)$. By the uniqueness of the weak limit in $C(J, \mathbb{R}^N)$, every uniformly convergent subsequence of $\{u_k\}$ converges uniformly on $J$ to $u$. The proof is complete.
\end{proof}

\begin{remark}\label{OR1}
It follows from Proposition \ref{39} that $W_{\Delta,a^+}^{\alpha,p}$ is compactly immersed into $C(J, \mathbb{R}^N)$ with the natural norm $\|\cdot\|_\infty$.
\end{remark}

\begin{theorem}\label{3.9}
Let $1<p<\infty$, $\frac{1}{p}<\alpha\leq1$, $\frac{1}{p}+\frac{1}{q}=1$, $L:J\times\mathbb{R}^N\times\mathbb{R}^N\rightarrow\mathbb{R}$, $(t,x,y)\mapsto L(t,x,y)$ satisfies
\begin{itemize}
  \item [$(i)$]
  for each $(x,y)\in\mathbb{R}^N\times\mathbb{R}^N$, $L(t,x,y)$ is $\Delta-$measurable in $t$;
  \item [$(ii)$]
  for $\Delta-$almost every $t\in J$, $L(t,x,y)$ is continuously differentiable in $(x,y)$.
\end{itemize}
If there exist  $m_1\in C(\mathbb{R}^+,\mathbb{R}^+)$, $m_2\in L_\Delta^1(J,\mathbb{R}^+)$ and $m_3\in L_\Delta^q(J,\mathbb{R}^+)$, $1<q<\infty$, such that, for $\Delta-$a.e. $t\in J$ and every $(x,y)\in\mathbb{R}^N\times\mathbb{R}^N$, one has
\begin{align*}
\vert L(t,x,y)\vert&\leq m_1(\vert x\vert)(m_2(t)+\vert y\vert^p),\\
\vert D_xL(t,x,y)\vert&\leq m_1(\vert x\vert)(m_2(t)+\vert y\vert^p),\\
\vert D_yL(t,x,y)\vert&\leq m_1(\vert x\vert)(m_3(t)+\vert y\vert^{p-1}).
\end{align*}
Then the functional $\chi$ defined by
\begin{eqnarray*}
\chi(u)=\int_{J^0}L(t,u(t),\,_a^\mathbb{T}D_t^\alpha u(t))\Delta t
\end{eqnarray*}
is continuously differentiable on $W_{\Delta,a^+}^{\alpha,p}$, and $\forall \,u,v\in W_{\Delta,a^+}^{\alpha,p}$, one has
\begin{eqnarray}\label{32}
\langle \chi'(u),v\rangle=\int_{J^0}\bigg[\left(D_xL(t,u(t),\,_a^\mathbb{T}D_t^\alpha u(t),v(t)\right)+\left(D_yL(t,u(t),\,_a^\mathbb{T}D_t^\alpha u(t),\,_a^\mathbb{T}D_t^\alpha v(t)\right)\bigg]\Delta t.\quad
\end{eqnarray}
\end{theorem}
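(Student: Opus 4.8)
The plan is to follow the classical scheme for integral functionals in the calculus of variations (as in Proposition 8.1 of \cite{9}), adapted to the time-scale fractional setting. Throughout I use the continuous embedding $W_{\Delta,a^+}^{\alpha,p}\hookrightarrow C(J,\mathbb{R}^N)$ guaranteed by Proposition \ref{034} (since $\alpha>\frac{1}{p}$); in particular every $u\in W_{\Delta,a^+}^{\alpha,p}$ satisfies $\|u\|_\infty\leq C\|{_a^\mathbb{T}}D_t^\alpha u\|_{L_\Delta^p}$, so that $|u(t)|$ stays in a fixed compact set and $m_1(|u(t)|)$ is bounded on $J$. This $L^\infty$-control is precisely what converts the growth hypotheses into genuine $\Delta$-integrable dominating functions. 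I would first check that $\chi$ is well defined and finite: for $u\in W_{\Delta,a^+}^{\alpha,p}$ the bound on $|L|$ gives $|L(t,u(t),{_a^\mathbb{T}}D_t^\alpha u(t))|\leq\big(\max_{s\in[0,\|u\|_\infty]}m_1(s)\big)\big(m_2(t)+|{_a^\mathbb{T}}D_t^\alpha u(t)|^p\big)$, whose right-hand side is $\Delta$-integrable because $m_2\in L_\Delta^1$ and ${_a^\mathbb{T}}D_t^\alpha u\in L_\Delta^p$; hence $\chi(u)\in\mathbb{R}$.

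Next I would compute the G\^ateaux derivative. Fix $u,v\in W_{\Delta,a^+}^{\alpha,p}$ and, for $\Delta$-a.e. $t$ and $s\in[-1,1]$, set $\psi_t(s)=L(t,u(t)+sv(t),{_a^\mathbb{T}}D_t^\alpha u(t)+s\,{_a^\mathbb{T}}D_t^\alpha v(t))$. By hypothesis (ii) each $\psi_t$ is $C^1$ with $\psi_t'(s)=\big(D_xL,v(t)\big)+\big(D_yL,{_a^\mathbb{T}}D_t^\alpha v(t)\big)$ evaluated at the shifted argument. Writing the difference quotient of $\chi$ as $\int_{J^0}s^{-1}(\psi_t(s)-\psi_t(0))\Delta t$ and applying the mean value theorem produces an integrand $\psi_t'(\theta(t)s)$ with $\theta(t)\in(0,1)$. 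Using the growth bounds on $D_xL,D_yL$ together with $\|u+sv\|_\infty$ bounded uniformly for $|s|\leq1$, Proposition \ref{17} (H\"older) yields a fixed dominating function in $L_\Delta^1$; Lebesgue's dominated convergence theorem on time scales then lets me pass $s\to0$, giving exactly the formula (\ref{32}). The same H\"older estimate shows $v\mapsto\langle\chi'(u),v\rangle$ is a bounded linear functional, so $\chi'(u)\in(W_{\Delta,a^+}^{\alpha,p})^*$.

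Finally, and this is the main obstacle, I would prove that $u\mapsto\chi'(u)$ is continuous, which upgrades G\^ateaux differentiability to $\chi\in C^1$. If $u_k\to u$ in $W_{\Delta,a^+}^{\alpha,p}$, then $u_k\to u$ uniformly on $J$ (by the embedding) and ${_a^\mathbb{T}}D_t^\alpha u_k\to{_a^\mathbb{T}}D_t^\alpha u$ in $L_\Delta^p$. The crux is the continuity of the Nemytskii operators associated with $D_xL$ and $D_yL$. I would argue along an arbitrary subsequence, extract a further subsequence along which ${_a^\mathbb{T}}D_t^\alpha u_k\to{_a^\mathbb{T}}D_t^\alpha u$ pointwise $\Delta$-a.e. with a common $L_\Delta^p$-dominant, use hypothesis (ii) to obtain pointwise convergence of $D_xL$ and $D_yL$, and invoke dominated convergence (the growth bounds plus the uniform $\|u_k\|_\infty$ bound supply the dominants) to conclude $\sup_{\|v\|\leq1}|\langle\chi'(u_k)-\chi'(u),v\rangle|\to0$ along that subsequence; the subsequence principle then promotes this to the full sequence. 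The delicate points are keeping $m_1(|u_k(t)|)$ uniformly bounded (which the uniform convergence in $C(J)$ secures) and splitting the dual pairing into its two terms, controlled respectively by $\|v\|_\infty$ and $\|{_a^\mathbb{T}}D_t^\alpha v\|_{L_\Delta^p}$, both dominated by $\|v\|_{W_{\Delta,a^+}^{\alpha,p}}$ via Proposition \ref{034} and Proposition \ref{17}.
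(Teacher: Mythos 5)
Your proposal is correct and follows essentially the same route as the paper: the paper reduces the statement to showing that the directional derivative exists, belongs to $(W_{\Delta,a^+}^{\alpha,p})^*$, and depends continuously on $u$, and then refers to the classical argument of Theorem 1.4 in \cite{12}, which is precisely the scheme (uniform bound via the embedding into $C(J,\mathbb{R}^N)$, mean value theorem plus dominated convergence for the G\^ateaux derivative, and Nemytskii-operator continuity for $\chi'$) that you carry out in detail.
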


\begin{proof}
It suffices to prove that $\chi$ has, at every point $u$, a directional derivative $\chi'(u)\in(W_{\Delta,a^+}^{\alpha,p})^*$ given by $(\ref{32})$ and that the mapping
\begin{eqnarray*}
\chi':W_{\Delta,a^+}^{\alpha,p}\ni u\mapsto \chi'(u)\in(W_{\Delta,a^+}^{\alpha,p})^*
\end{eqnarray*}
is continuous. The rest proof is similar to the proof of Theorem 1.4 in \cite{12}. We will omit it here. The proof is complete.
\end{proof}

\section{An application}
\setcounter{equation}{0}
\indent

As an application of the concepts we introduced and the results obtained in Section 3, in this section we will use critical point theory to study the solvability of a class of boundary value problems on time scales.
 More precisely, our goal is to study the following Kirchhoff-type fractional $p$-Laplacian systems on time scales with boundary condition(KFBVP$_{\mathbb{T}}$ for short):
\begin{equation}\label{O1}
\begin{cases}
\left(\beta+\varrho\int_{{[a,b)}_{\mathbb{T}}}\vert{^{\mathbb{T}}_a}D^\alpha_tu(t)\vert^p\Delta t\right)^{p-1}\,^{\mathbb{T}}_tD^\alpha_b\phi_p(^{\mathbb{T}}_aD^\alpha_tu(t))=\lambda (t)\nabla G(t,u(t)),\quad \Delta-a.e.\,\,t\in [a,b]_{\mathbb{T}},\\
u(a)=u(b)=0,
\end{cases}
\end{equation}
where $\beta,\varrho>0$ and $p>1$ are constants, $\lambda\in L^\infty_\Delta([a,b]_{\mathbb{T}},\mathbb{R}^+)$ with
$ess\sup\limits_{t\in[a,b]_{\mathbb{T}}}\lambda(t):=\lambda^0
>\lambda_0:=ess\inf\limits_{t\in[a,b]_{\mathbb{T}}}\lambda(t)>0$, $^{\mathbb{T}}_tD^\alpha_b$ and $^{\mathbb{T}}_aD^\alpha_t$ are the right and the left Riemann$-$Liouville fractional derivative operators of order $\alpha$ defined on $\mathbb{T}$ respectively, and $\phi_p:\mathbb{R}\rightarrow\mathbb{R}$ is the $p-$Laplacian(\cite{O1}) defined by
\begin{equation*}
\phi_p(y)=
\left\{
\begin{aligned}
&\vert y\vert^{p-2}y,&\quad if\,\,y\neq0,\\
&0,&\quad if\,\,y=0.
\end{aligned}
\right.
\end{equation*}
And $\nabla G\in C([a,b]_{\mathbb{T}}\times\mathbb{R}, \mathbb{R})$ denotes the gradient of $G(t,x)$ in $x$.
When $\mathbb{T}=\mathbb{R}$, FBVP$_{\mathbb{T}}$ (\ref{O1}) reduces to the following Kirchhoff-type fractional $p$-Laplacian systems
\begin{equation*}
\begin{cases}
\left(\beta+\varrho\int_a^b\vert{_aD^\alpha_t}u(t)\vert^pd t\right)^{p-1}\,_tD^\alpha_b\phi_p(_aD^\alpha_tu(t))=\lambda (t)\nabla G(t,u(t)),\quad a.e.\,\,t\in [a,b],\\
u(a)=u(b)=0,
\end{cases}
\end{equation*}
When $\mathbb{T}=\mathbb{R}$ and $\lambda(t)=\lambda\in(0,+\infty)$, FBVP$_{\mathbb{T}}$ (\ref{O1}) reduces to the following Kirchhoff-type fractional $p$-Laplacian systems
\begin{equation*}
\begin{cases}
\left(\beta+\varrho\int_a^b\vert{_aD^\alpha_t}u(t)\vert^pd t\right)^{p-1}\,_tD^\alpha_b\phi_p(_aD^\alpha_tu(t))=\lambda \nabla G(t,u(t)),\quad a.e.\,\,t\in [a,b],\\
u(a)=u(b)=0,
\end{cases}
\end{equation*}
When $\mathbb{T}=\mathbb{R}$, $\lambda(t)=1$, our results further reduce to the following problems
\begin{equation*}
\begin{cases}
\left(\beta+\varrho\int_a^b\vert{_aD^\alpha_t}u(t)\vert^pd t\right)^{p-1}\,_tD^\alpha_b\phi_p(_aD^\alpha_tu(t))=\nabla G(t,u(t)),\quad a.e.\,\,t\in [a,b],\\
u(a)=u(b)=0,
\end{cases}
\end{equation*}
which has been studied by \cite{O2}. So, in short, our results improved and generalized \cite{O2}.

\begin{definition}\label{OD1} \cite{12}
Let $E$ be a real Banach space and $\varphi\in C^1(E,\mathbb{R})$. If any sequence $u_k\}\subset E$ for which $\varphi(u_k)$ is bounded and $\varphi'(u_k)\rightarrow0$ as $k\rightarrow\infty$ possesses a convergent subsequence in $E$, then we say that $\varphi$ satisfies the $(PS)$ condition.
\end{definition}

\begin{lemma}\label{OL1} \cite{O3}
Let $E$ be a real Banach space and $\varphi\in C^1(E,\mathbb{R})$ satisfying the $(PS)$ condition. Assume that $\varphi(0)=0$ and the following conditions:
\begin{itemize}
  \item [$(A_1)$]
  there are constants $\rho,\sigma>0$ such that $\varphi|_{\partial B_\rho(0)}\geq\sigma$;
  \item [$(A_2)$]
  there exists an $e\in E\setminus\overline{B_\rho(0)}$ such that $\varphi(e)\leq0$.
\end{itemize}
Then, $\varphi$ possesses a critical value $c\geq\sigma$. Furthermore, $c$ can be characterized as
\begin{eqnarray*}
c=\inf_{\nu\in\Gamma}\max_{s\in[0,1]}\varphi(\nu(s)),
\end{eqnarray*}
where
\begin{eqnarray*}
\Gamma=\{\nu\in C([0,1],E)\vert\nu(0)=0,\nu(1)=e\}.
\end{eqnarray*}
\end{lemma}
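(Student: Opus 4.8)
The plan is to prove this by the classical minimax-plus-deformation argument that underlies the Mountain Pass Theorem. The candidate critical value is the number $c=\inf_{\nu\in\Gamma}\max_{s\in[0,1]}\varphi(\nu(s))$ itself, and the whole task splits into two independent parts: showing $c\geq\sigma$, which is elementary and topological, and showing that $c$ is actually attained as a critical value, which is where the real work lies.

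First I would establish the lower bound $c\geq\sigma$. Each $\nu\in\Gamma$ is a continuous path with $\nu(0)=0\in B_\rho(0)$ and $\nu(1)=e\in E\setminus\overline{B_\rho(0)}$. Since $[0,1]$ is connected and the real function $s\mapsto\|\nu(s)\|$ is continuous, taking the values $0$ and a value exceeding $\rho$, the intermediate value theorem produces some $s_\nu\in(0,1)$ with $\|\nu(s_\nu)\|=\rho$, i.e. $\nu(s_\nu)\in\partial B_\rho(0)$. By hypothesis $(A_1)$ this gives $\varphi(\nu(s_\nu))\geq\sigma$, hence $\max_{s\in[0,1]}\varphi(\nu(s))\geq\sigma$ for every $\nu\in\Gamma$, and therefore $c\geq\sigma>0$.

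The heart of the proof is to show $c$ is a critical value, and I would argue by contradiction, assuming $c$ is a regular value (no $u$ with $\varphi(u)=c$ and $\varphi'(u)=0$). The key tool is a quantitative deformation lemma: using that $\varphi\in C^1(E,\mathbb{R})$ satisfies the $(PS)$ condition and that $c$ is regular, one produces an $\varepsilon\in(0,\sigma/2)$ and a continuous map $\eta:[0,1]\times E\to E$ such that $\eta(1,\cdot)$ carries the sublevel set $\{\varphi\leq c+\varepsilon\}$ into $\{\varphi\leq c-\varepsilon\}$, while fixing every point $u$ with $\varphi(u)\notin(c-2\varepsilon,c+2\varepsilon)$. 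Then I would choose, by the definition of $c$ as an infimum, a near-optimal path $\nu\in\Gamma$ with $\max_{s\in[0,1]}\varphi(\nu(s))\leq c+\varepsilon$, and set $\widetilde{\nu}(s)=\eta(1,\nu(s))$. Because $c\geq\sigma>2\varepsilon$ we have $\varphi(0)=0$ and $\varphi(e)\leq0$ both strictly below $c-2\varepsilon$, so $\eta$ fixes the two endpoints and $\widetilde{\nu}(0)=0$, $\widetilde{\nu}(1)=e$, i.e. $\widetilde{\nu}\in\Gamma$. The deformation property then forces
\[
\max_{s\in[0,1]}\varphi(\widetilde{\nu}(s))\leq c-\varepsilon<c,
\]
contradicting the minimax definition of $c$. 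Hence $c$ is a critical value and, combined with the first part, $c\geq\sigma$.

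The main obstacle is the deformation lemma on a general Banach space. Unlike the Hilbert setting, $\varphi'(u)$ is only a functional and there is no canonical gradient along which to flow, so I would invoke Palais' pseudo-gradient vector field to build a locally Lipschitz field $V$ with $\|V(u)\|\leq2\|\varphi'(u)\|$ and $\langle\varphi'(u),V(u)\rangle\geq\|\varphi'(u)\|^2$, then define $\eta$ as the flow of a suitably truncated, renormalized multiple of $-V$. Here the $(PS)$ condition enters decisively: it guarantees that $\|\varphi'\|$ is bounded below away from $0$ on the closed strip $\{c-2\varepsilon\leq\varphi\leq c+2\varepsilon\}$ (otherwise a $(PS)$ sequence at level $c$ would yield a critical point at the regular level $c$), which makes the flow decrease $\varphi$ by a definite amount in finite time and thereby furnishes exactly the sublevel-set inclusion used above. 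Everything else in the argument is the elementary crossing estimate and the bookkeeping of where $\eta$ acts.
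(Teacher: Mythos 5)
Your argument is correct: the crossing estimate via the intermediate value theorem gives $c\geq\sigma$, and the contradiction via the quantitative deformation lemma (built from a Palais pseudo-gradient flow, with the $(PS)$ condition supplying the lower bound on $\|\varphi'\|$ in the strip and $\varepsilon<\sigma/2$ ensuring the endpoints are fixed) is exactly the classical Ambrosetti--Rabinowitz proof. The paper itself offers no proof of this lemma --- it is quoted verbatim from Rabinowitz's monograph \cite{O3} --- and your reconstruction coincides with the standard proof given there, so there is nothing to fault.
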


\begin{lemma}\label{OL2} \cite{12}
Let $E$ be a real Banach space and $\varphi\in C^1(E,\mathbb{R})$ satisfying the $(PS)$ condition. If $\varphi$ is bounded from below, then $c=\inf\limits_{E}\varphi$ is a critical value of $\varphi$.
\end{lemma}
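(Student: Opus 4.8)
The plan is to produce a minimizing sequence that is simultaneously a Palais--Smale sequence at the level $c$, and then let the $(PS)$ condition do the rest. First I would record that, since $\varphi$ is bounded from below, $c=\inf_E\varphi$ is a finite real number; in particular, for every $\varepsilon>0$ the sublevel set $\{u\in E:\varphi(u)\le c+\varepsilon\}$ is nonempty, so there is no difficulty producing almost-minimizers. The whole issue is that a crude minimizing sequence $\{v_n\}$ with $\varphi(v_n)\to c$ need not satisfy $\varphi'(v_n)\to0$, and so the $(PS)$ hypothesis cannot be applied to it directly.

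The key step is therefore to invoke Ekeland's variational principle on the complete metric space $E$. For each $n\in\mathbb{N}$, picking $v_n$ with $\varphi(v_n)\le c+\tfrac{1}{n^2}$ and applying the principle with parameter $\tfrac{1}{n^2}$, I obtain a point $u_n\in E$ with
\begin{eqnarray*}
\varphi(u_n)\le c+\frac{1}{n^2},\qquad \varphi(u_n)\le\varphi(v)+\frac{1}{n}\|v-u_n\|\quad\text{for all }v\in E.
\end{eqnarray*}
Testing the second inequality with $v=u_n+tw$ for a unit vector $w$ and $t>0$, subtracting $\varphi(u_n)$, dividing by $t$ and letting $t\to0^+$ gives $\langle\varphi'(u_n),w\rangle\ge-\tfrac{1}{n}$; replacing $w$ by $-w$ yields $|\langle\varphi'(u_n),w\rangle|\le\tfrac{1}{n}$ for every unit $w$, hence
\begin{eqnarray*}
\|\varphi'(u_n)\|_{E^*}\le\frac{1}{n}.
\end{eqnarray*}
Thus $\varphi(u_n)\to c$ and $\varphi'(u_n)\to0$, so $\{u_n\}$ is exactly a $(PS)$ sequence.

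Then I would close the argument using the hypotheses verbatim. Because $\varphi$ satisfies the $(PS)$ condition and $\{\varphi(u_n)\}$ is bounded with $\varphi'(u_n)\to0$, the sequence $\{u_n\}$ admits a subsequence $u_{n_k}\to u$ converging in $E$. Continuity of $\varphi$ gives $\varphi(u)=\lim_k\varphi(u_{n_k})=c$, and continuity of $\varphi'$ (which holds since $\varphi\in C^1(E,\mathbb{R})$) gives $\varphi'(u)=\lim_k\varphi'(u_{n_k})=0$. Hence $u$ is a critical point of $\varphi$ with $\varphi(u)=c$, so $c=\inf_E\varphi$ is a critical value.

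The main obstacle is the middle step, namely upgrading an arbitrary almost-minimizer into an approximate critical point; Ekeland's variational principle is precisely the tool that delivers the quantitative estimate $\|\varphi'(u_n)\|_{E^*}\le 1/n$, and extracting that bound from the variational inequality is the heart of the proof. An alternative route would be a contradiction argument via the quantitative deformation lemma: if $c$ were a regular value, one could deform the set $\{\varphi\le c+\varepsilon\}$ into $\{\varphi\le c-\varepsilon\}$, contradicting $c=\inf_E\varphi$. I prefer the Ekeland approach because it avoids constructing a pseudo-gradient flow on a general Banach space, which is heavier machinery than needed here.
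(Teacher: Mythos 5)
Your proof is correct. The paper does not prove this lemma at all --- it is quoted verbatim from the cited reference (Mawhin--Willem), and the Ekeland-based argument you give (almost-minimizer $\Rightarrow$ Palais--Smale sequence at level $c$ via the variational inequality, then compactness from the $(PS)$ condition and continuity of $\varphi$ and $\varphi'$) is exactly the standard proof found there, with all hypotheses of Ekeland's principle (completeness of $E$, lower semicontinuity and boundedness below of $\varphi$) satisfied.
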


For the sake of the infinitely many critical points of $\varphi$, one introduces the genus properties as follows. First, we let
\begin{eqnarray*}
&\Xi&=\{A\subset E-\{0\}\vert A\,\,\mathrm{is}\,\,\mathrm{closed}\,\,\mathrm{in}\,\,E\,\,\mathrm{and}\,\,
\mathrm{symmetric}\,\,\mathrm{with}\,\,\mathrm{respect}\,\,\mathrm{to}\,\,0\},\\
&K_c&=\{u\in E\vert\varphi(u)=c,\varphi'(u)=0\},\\
&\varphi^c&=\{u\in E\vert\varphi(u)\leq c\}.
\end{eqnarray*}

\begin{definition}\label{def1} \cite{O3}
For $A\in\Xi$, we say that the genus of $A$ is $n$ denoted by $\gamma(A)=n$ if there is an odd map $P\in C(A,\mathbb{R}^N\setminus\{0\})$ and $n$ is the smallest integer with this property.
\end{definition}

\begin{lemma}\label{OL3} \cite{O3}
Let $\varphi$ be an even $C^1$ functional on $E$ and satisfy the $(PS)$ condition. For any $n\in\mathbb{N}$, set
\begin{eqnarray*}
&\Xi_n&=\{A\in\Xi\vert\gamma(A)\geq n\},\\
&c_n&=\inf_{A\in\Xi_n}\sup_{u\in A}\varphi(u).
\end{eqnarray*}
\begin{itemize}
  \item [$(i)$]
  If $\Xi_n\neq0$ and $c_n\in\mathbb{R}$, then $c_n$ is a critical value of $\varphi$;
  \item [$(ii)$]
  If there exists $l\in \mathbb{N}$ such that $c_n=c_{n+1}=\cdots=c_{n+l}=c\in\mathbb{R}$ and $c\neq\varphi(0)$, then $\gamma(K_c)\geq l+1$.
\end{itemize}
\end{lemma}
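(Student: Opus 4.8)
The plan is to prove both parts by contradiction, using the equivariant deformation lemma together with the standard topological properties of the Krasnoselskii genus $\gamma$. The ingredients I would assemble first are: (a) monotonicity and the odd-map property, namely $\gamma(A)\le\gamma(B)$ whenever there is an odd continuous map $A\to B$ (in particular when $A\subset B$, or when $B=\eta(A)$ for an odd homeomorphism $\eta$); (b) the subadditivity estimate $\gamma(\overline{A\setminus B})\ge\gamma(A)-\gamma(B)$ for $A,B\in\Xi$ with $\gamma(B)<\infty$; (c) for a compact symmetric set $K\subset E\setminus\{0\}$, the finiteness $\gamma(K)<\infty$ together with the existence of a symmetric neighborhood $N$ of $K$ satisfying $\gamma(\overline N)=\gamma(K)$; and (d) the equivariant deformation lemma: since $\varphi$ is even and satisfies the $(PS)$ condition, if $c$ has no critical points outside a given symmetric neighborhood $N$, there exist $\e>0$ and an odd homeomorphism $\eta:E\to E$ with $\eta(\varphi^{c+\e}\setminus N)\subset\varphi^{c-\e}$.

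For part $(i)$, suppose $c_n\in\mathbb{R}$ but $c_n$ is not a critical value, so $K_{c_n}=\emptyset$. Applying the deformation lemma with $N=\emptyset$ yields $\e>0$ and an odd homeomorphism $\eta$ with $\eta(\varphi^{c_n+\e})\subset\varphi^{c_n-\e}$. By the definition of $c_n$ as an infimum over $\Xi_n$, I may choose $A\in\Xi_n$ with $\sup_{u\in A}\varphi(u)<c_n+\e$, i.e. $A\subset\varphi^{c_n+\e}$; then $\eta(A)\subset\varphi^{c_n-\e}$. Since $\eta$ is an odd homeomorphism fixing $0$, the set $\eta(A)$ is again closed and symmetric with $0\notin\eta(A)$, and by (a) we have $\gamma(\eta(A))\ge\gamma(A)\ge n$, so $\eta(A)\in\Xi_n$. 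This forces $c_n\le\sup_{u\in\eta(A)}\varphi(u)\le c_n-\e$, a contradiction.

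For part $(ii)$, write $c=c_n=\cdots=c_{n+l}$ with $c\neq\varphi(0)$. Because $\varphi$ is even, $\varphi'$ is odd and hence $\varphi'(0)=0$, so $0\in K_{\varphi(0)}$; as $c\neq\varphi(0)$ we get $0\notin K_c$, and the $(PS)$ condition makes $K_c$ compact and symmetric, so $\gamma(K_c)$ is well defined and finite. Arguing by contradiction, assume $\gamma(K_c)\le l$. By (c) choose a symmetric neighborhood $N$ of $K_c$ with $\gamma(\overline N)=\gamma(K_c)\le l$, and apply the deformation lemma to obtain $\e>0$ and an odd homeomorphism $\eta$ with $\eta(\varphi^{c+\e}\setminus N)\subset\varphi^{c-\e}$. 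Since $c=c_{n+l}$, pick $A\in\Xi_{n+l}$ with $A\subset\varphi^{c+\e}$. By (b), $\gamma(\overline{A\setminus N})\ge\gamma(A)-\gamma(\overline N)\ge(n+l)-l=n$, so $\overline{A\setminus N}\in\Xi_n$; applying $\eta$ and using (a) gives $\eta(\overline{A\setminus N})\in\Xi_n$ with $\eta(\overline{A\setminus N})\subset\varphi^{c-\e}$, whence $c=c_n\le\sup_{u\in\eta(\overline{A\setminus N})}\varphi(u)\le c-\e$, again a contradiction. Therefore $\gamma(K_c)\ge l+1$.

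The main obstacle I anticipate is establishing the equivariant deformation lemma of ingredient (d): one must construct a negative pseudo-gradient flow that can be chosen odd precisely because $\varphi$ is even, and then use the $(PS)$ condition to guarantee that this flow pushes the sublevel set below $c$ while possibly stalling only near $K_c$ (which is why the neighborhood $N$ must be excised). The genus properties (a)--(c), although somewhat delicate to prove from first principles, are classical and may simply be quoted; the genuinely analytic input is the flow construction together with the compactness of $K_c$ furnished by the $(PS)$ condition.
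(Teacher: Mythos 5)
The paper gives no proof of this lemma: it is imported verbatim from Rabinowitz \cite{O3} (it is essentially Proposition 9.1/Theorem 7.5 there), so there is nothing internal to compare against. Your argument is precisely the classical deformation-lemma proof from that source and is correct in both parts: part $(i)$ via the deformation with $N=\emptyset$ applied to a near-optimal $A\in\Xi_n$, and part $(ii)$ via compactness and symmetry of $K_c$ (with $0\notin K_c$ because $c\neq\varphi(0)$), the continuity/neighborhood property of the genus, subadditivity, and the excised deformation. The only point worth tightening is the inclusion $\eta(\overline{A\setminus N})\subset\varphi^{c-\varepsilon}$: the closure $\overline{A\setminus N}$ may meet $\partial N$, so it need not lie in $\varphi^{c+\varepsilon}\setminus N$; one should invoke the deformation theorem with the open set $N^{\circ}$ (still a neighborhood of $K_c$) excised, and observe that $\overline{A\setminus N}\subset\varphi^{c+\varepsilon}\setminus N^{\circ}$, after which the contradiction $c\le c-\varepsilon$ goes through exactly as you wrote.
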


\begin{remark}\label{le1} \cite{O3}
In view of Remark 7.3 in \cite{O3}, one sees that if $K_c\in\Xi$ and $\gamma(K_c)$ contains infinitely many distinct points. In other words, $\varphi$ has infinitely many distinct critical points in $E$.
\end{remark}

There have been many results using critical point theory  to study boundary value problems of fractional differential equations (\cite{f1,f2,f3,f4,f5,f6,31}) and dynamic equations on time scales (\cite{d1,d2,d3,d4,d5}), but the results of using critical point theory to study boundary value problems of fractional dynamic equations on time scales are still rare \cite{2a}.
This section will explain that critical point theory is an effective way to deal with the existence of  solutions of \eqref{O1} on  time scales.

In this section, we let $N=1$. For purpose of the presence and proof of our main results, let's first define the functional $\varphi:W_{\Delta,a^+}^{\alpha,p}\rightarrow\mathbb{R}$ by
\begin{align}\label{Oe1}
\varphi(u)=\frac{1}{\varrho p^2}\left(\beta+\varrho\int_{J^0}\vert{^{\mathbb{T}}_aD_t^\alpha} u(t)\vert^p\Delta t\right)^p
-\int_{J^0}\lambda (t) G(t,u(t))\Delta t-\frac{\beta^p}{\varrho p^2}
\end{align}
It is easy to testify from (\ref{33}), (\ref{37}) and $g\in C(J\times\mathbb{R},\mathbb{R})$ that the functional $\varphi$ is well defined on $W_{\Delta,a^+}^{\alpha,p}$ and $\varphi\in C(W_{\Delta,a^+}^{\alpha,p},\mathbb{R})$. Moreover, for $\forall u,v\in W_{\Delta,a^+}^{\alpha,p}$, one obtains
\begin{align}\label{Oe2}
\langle\varphi'(u),v\rangle=(\beta+\varrho\|u\|^p)^{p-1}\int_{J^0}\phi_p(^{\mathbb{T}}_aD^\alpha_t
u(t))^{\mathbb{T}}_aD^\alpha_tv(t)\Delta t-\int_{J^0} \lambda (t)\nabla G(t,u(t))v(t)\Delta t,
\end{align}
which yields
\begin{align}\label{Oe3}
\langle\varphi'(u),u\rangle=(\beta+\varrho\|u\|^p)^{p-1}\|u\|^p-\int_{J^0} \lambda (t)\nabla G(t,u(t))u(t)\Delta t.
\end{align}

Now, it is time for us to present and prove our main results as follows:

\begin{theorem}\label{OT1}
Let $\alpha\in\left(\frac{1}{p},1\right]$, suppose that $G$ satisfies the following conditions:
\begin{itemize}
  \item [$(G_1)$]
  $G(t,x)$ is $\Delta-$ measurable and continuously differentiable in $x$ for $t\in J$ and there exist $a\in C(\mathbb{R}^+,\mathbb{R}^+)$, $b\in L_\Delta^1(J,\mathbb{R}^+)$ such that
  \begin{eqnarray}\label{37}
  \vert G(t,x)\vert\leq a(\vert x\vert)b(t),\quad \vert\nabla G(t,x)\vert\leq a(\vert x\vert)b(t)
  \end{eqnarray}
  for all $x\in\mathbb{R}$ and $t\in J$.
  \item [$(G_2)$]
  There are two constants $\mu>p^2$, $M>0$ such that
  \begin{eqnarray*}
  0<\mu \,G(t,x)\leq x\,\nabla G(t,x),\quad \forall t\in J\,\,\mathrm{and}\,\,\vert x\vert\geq M.
  \end{eqnarray*}
  \item [$(G_3)$]
  $\nabla G(t,x)=o(\vert x\vert^{p-1})$ as $\vert x\vert\rightarrow0$ uniformly for $t\in J$.
\end{itemize}
Then, KFBVP$_{\mathbb{T}}$ (\ref{O1}) has at least one nontrivial weak solution.
\end{theorem}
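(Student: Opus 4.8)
The plan is to realize the weak solutions of (\ref{O1}) as critical points of the functional $\varphi$ from (\ref{Oe1}) and to produce a nontrivial one via the mountain pass theorem, Lemma \ref{OL1}. First I would fix the variational framework: applying Theorem \ref{3.9} to the Lagrangian $L(t,x,y)=\lambda(t)G(t,x)$ (whose growth is controlled by $(G_1)$) and combining it with the smoothness of $s\mapsto(\beta+\varrho s)^p$ and of $u\mapsto\|u\|^p=\int_{J^0}|{}^{\mathbb{T}}_aD^\alpha_t u|^p\Delta t$, one gets that $\varphi\in C^1(W_{\Delta,a^+}^{\alpha,p},\mathbb{R})$ with derivative (\ref{Oe2}), and clearly $\varphi(0)=0$. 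A direct computation with the integration-by-parts formula Theorem \ref{12}(b) shows that every critical point of $\varphi$ is exactly a weak solution of (\ref{O1}), so it suffices to exhibit a nonzero critical point.

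The core of the argument is the $(PS)$ condition of Definition \ref{OD1}. Given $\{u_k\}$ with $\varphi(u_k)$ bounded and $\varphi'(u_k)\to0$, I would first bound $\{u_k\}$ by estimating $\varphi(u_k)-\tfrac1\mu\langle\varphi'(u_k),u_k\rangle$ and invoking $(G_2)$. The decisive point is that both $\tfrac{1}{\varrho p^2}(\beta+\varrho\|u_k\|^p)^p$ and $\tfrac1\mu(\beta+\varrho\|u_k\|^p)^{p-1}\|u_k\|^p$ grow like $\|u_k\|^{p^2}$, with leading coefficients $\tfrac{\varrho^{p-1}}{p^2}$ and $\tfrac{\varrho^{p-1}}{\mu}$; since $\mu>p^2$ their difference is strictly positive, which forces $\|u_k\|^{p^2}$ to stay bounded. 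By reflexivity (Theorem \ref{31}) a subsequence satisfies $u_k\rightharpoonup u$, and by the compact immersion into $C(J,\mathbb{R}^N)$ (Remark \ref{OR1}, Proposition \ref{39}) one has $u_k\to u$ uniformly, whence $\int_{J^0}\lambda(t)\nabla G(t,u_k)(u_k-u)\Delta t\to0$. Substituting this into $\langle\varphi'(u_k),u_k-u\rangle\to0$ and dividing by the Kirchhoff factor $(\beta+\varrho\|u_k\|^p)^{p-1}$ — which lies in $[\beta^{p-1},\mathrm{const}]$, hence is bounded away from $0$ precisely because $\beta>0$ — leaves $\int_{J^0}\big(\phi_p({}^{\mathbb{T}}_aD^\alpha_t u_k)-\phi_p({}^{\mathbb{T}}_aD^\alpha_t u)\big)\big({}^{\mathbb{T}}_aD^\alpha_t u_k-{}^{\mathbb{T}}_aD^\alpha_t u\big)\Delta t\to0$, and the strict-monotonicity inequality for $\phi_p$ then gives $\|u_k-u\|\to0$.

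For the geometry, I would use $(G_3)$ to obtain $|G(t,x)|\le\tfrac{\e}{p}|x|^p$ near $x=0$ for any prescribed $\e>0$; combined with the convexity bound $(\beta+\varrho\|u\|^p)^p-\beta^p\ge p\beta^{p-1}\varrho\|u\|^p$ and the embedding estimate $\|u\|_{L_\Delta^p}\le\tfrac{b^\alpha}{\Gamma(\alpha+1)}\|u\|$ from (\ref{33}), this yields $\varphi(u)\ge\big(\tfrac{\beta^{p-1}}{p}-\mathrm{const}\cdot\e\big)\|u\|^p$ on a small sphere; choosing $\e$ small produces constants $\rho,\sigma>0$ with $\varphi|_{\partial B_\rho(0)}\ge\sigma$, i.e. $(A_1)$. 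For $(A_2)$, $(G_2)$ integrates to $G(t,su_0)\ge s^\mu G(t,u_0)$ for $s\ge1$ (where $|su_0|\ge M$), so along a ray $s\mapsto\varphi(su_0)$ with a fixed $u_0\neq0$ the potential grows like $s^\mu$ while the Kirchhoff term grows only like $s^{p^2}$; as $\mu>p^2$ this gives $\varphi(su_0)\to-\infty$, and any sufficiently large $e=s_0u_0$ satisfies $\|e\|>\rho$ and $\varphi(e)\le0$.

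With $\varphi\in C^1$, $\varphi(0)=0$, the $(PS)$ condition, $(A_1)$ and $(A_2)$ established, Lemma \ref{OL1} delivers a critical value $c\ge\sigma>0=\varphi(0)$, hence a critical point $u$ with $\varphi(u)=c\neq\varphi(0)$, so $u\neq0$ is the desired nontrivial weak solution. I expect the genuine obstacle to lie in the strong-convergence step of $(PS)$: one must ensure that the Kirchhoff factor does not degenerate (guaranteed here by $\beta>0$) so that the monotonicity of the fractional $p$-Laplacian can be applied, and one must confirm that the $(S_+)$-type inequality for $\phi_p$ carries over verbatim to the delta-integral setting on $\mathbb{T}$.
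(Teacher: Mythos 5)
Your proposal is correct and follows essentially the same route as the paper's proof: boundedness of $(PS)$ sequences from $(G_2)$ via the comparison of the $\|u_k\|^{p^2}$ growth rates with coefficients $\tfrac{1}{p^2}$ versus $\tfrac{1}{\mu}$, strong convergence via the compact immersion into $C(J,\mathbb{R}^N)$ and Simon's monotonicity inequality for $\phi_p$ (with the Kirchhoff factor bounded below by $\beta^{p-1}$), the mountain-pass geometry $(A_1)$ from $(G_3)$ and $(A_2)$ from integrating $(G_2)$ along rays, and finally Lemma \ref{OL1}. The only cosmetic difference is that you test $\varphi'(u_k)$ against $u_k-u$ and divide by the Kirchhoff factor, whereas the paper decomposes $\langle\varphi'(u_k)-\varphi'(u),u_k-u\rangle$ directly; both are equivalent.
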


\begin{proof}
It is clear that $\varphi(0)=0$, $\varphi\in C^1(W_{\Delta,a^+}^{\alpha,p},\mathbb{R})$, where $W_{\Delta,a^+}^{\alpha,p}$ is a real Banach space from Theorem \ref{3.6}, So now, we are in a position by using Mountain pass theorem (Lemma \ref{OL1}) to prove that
\begin{itemize}
  \item [$\mathrm{step}\,\,1.$]
  $\varphi$ satisfies the $(PS)$ condition in $W_{\Delta,a^+}^{\alpha,p}$. The argument is as follows:
  Let $\{u_k\}\subset W_{\Delta,a^+}^{\alpha,p}$ be a sequence such that
  \begin{eqnarray}\label{Oe4}
  &\vert\varphi(u_k)\vert\leq K,&\nonumber\\
  &\varphi'(u_k)\rightarrow0\quad \mathrm{as}\,\, k\rightarrow\infty,&
  \end{eqnarray}
  where $K>0$ is a constant. We first prove that $\{u_k\}$ is bounded in $W_{\Delta,a^+}^{\alpha,p}$. From the continuity of $\mu G(t,x)-xg(t,x)$, we obtain that there is a constant $c>0$ such that
  \begin{eqnarray*}
  G(t,x)\leq \frac{1}{\mu}x\nabla G(t,x)+c,\quad \forall t\in J,\,\, \vert x\vert\leq M
  \end{eqnarray*}
  Combining with $(\mathbf{G_2})$, we obtain that
  \begin{eqnarray}\label{Oe5}
  G(t,x)\leq \frac{1}{\mu}x\nabla G(t,x)+c,\quad \forall (t,x)\in J\times\mathbb{R}.
  \end{eqnarray}
  Hence, taking account of (\ref{37}), (\ref{Oe1}), (\ref{Oe3}), (\ref{Oe4}), (\ref{Oe5}) and Lemma \ref{OL4}, we have
  {\setlength\arraycolsep{2pt}
  \begin{eqnarray}\label{Oe6}
  &&K\nonumber\\
  &\geq&\varphi(u_k)\nonumber\\
  &=&\frac{1}{\varrho p^2}\left(\beta+\varrho\int_{J^0}\vert{^{\mathbb{T}}_a}D_t^\alpha u_k(t)\vert^p\Delta t\right)^p-\int_{J^0} \lambda (t)G(t,u_k(t))\Delta t-\frac{\beta^p}{\varrho p^2}\nonumber\\
  &=&\frac{1}{\varrho p^2}(\beta+\varrho\|u_k\|^p)^p-\int_{J^0} \lambda (t)G(t,u_k(t))\Delta t-\frac{\beta^p}{\varrho p^2}\nonumber\\
  &\geq&\frac{1}{\varrho p^2}(\beta+\varrho\|u_k\|^p)^p-\int_{J^0}\left[\frac{1}{\mu}u_k(t) \nabla G(t,u_k(t))+c\right]\Delta t-\frac{\beta^p}{\varrho p^2}\nonumber\\
  &=&\frac{1}{\varrho p^2}(\beta+\varrho\|u_k\|^p)^p+\frac{1}{\mu}\langle\varphi'(u_k),u_k\rangle
  -\frac{1}{\mu}(\beta+\varrho\|u_k\|^p)^{p-1}-c(b-a)-\frac{\beta^p}{\varrho p^2}\nonumber\\
  &\geq&(\beta+\varrho\|u_k\|^p)^{p-1}\left[\left(\frac{1}{p^2}-\frac{1}{\mu}\right)\|u_k\|^p
  +\frac{\beta}{\varrho p^2}\right]-\frac{1}{\mu}\|\varphi'(u_k)\|_{(W_{\Delta,a^+}^{\alpha,p})^*}\|u_k\|\nonumber\\
  &&-cb-\frac{\beta^p}{\varrho p^2},
  \end{eqnarray}}
  which together with $\varphi'(u_k)\rightarrow0$ as $k\rightarrow\infty$ yields
  \begin{eqnarray}\label{Oe7}
  K\geq (\beta+\varrho\|u_k\|^p)^{p-1}\left[\left(\frac{1}{p^2}-\frac{1}{\mu}\right)\|u_k\|^p
  +\frac{\beta}{\varrho p^2}\right]-\|u_k\|-cb-\frac{\beta^p}{\varrho p^2}.
  \end{eqnarray}
  Then, combining with $\mu>p^2$ and proof by contradiction, we know that $\{u_k\}$ is bounded in $W_{\Delta,a^+}^{\alpha,p}$.

  Because $W_{\Delta,a^+}^{\alpha,p}$ is a reflexive Banach space (Theorem \ref{31} and Theorem \ref{3.6}), going if necessary to a subsequence, we can assume $u_k\rightharpoonup u$ in $W_{\Delta,a^+}^{\alpha,p}$. As a result, in view of $\varphi'(u_k)\rightarrow0$ as $k\rightarrow\infty$ and the definition of weak convergence, one sees
  \begin{align}\label{Oe8}
  \langle\varphi'(u_k)-\varphi'(u),u_k-u\rangle
=&\langle\varphi'(u_k),u_k-u\rangle-\langle\varphi'(u),u_k-u\rangle\nonumber\\
\leq&\|\varphi'(u_k)\|_{(W_{\Delta,a^+}^{\alpha,p})^*}\|u_k-u\|-\langle\varphi'(u),u_k-u\rangle\nonumber\\
\leq&\|\varphi'(u_k)\|_{(W_{\Delta,a^+}^{\alpha,p})^*}(\|u_k\|+\|u\|)-\langle\varphi'(u),u_k-u\rangle\nonumber\\
\rightarrow&0,\quad \mathrm{as}\,\,k\rightarrow\infty.
  \end{align}
  Furthermore, it follows from (\ref{34}), (\ref{37}) and Remark \ref{OR1} that $\{u_k\}$ is bounded in $C(J,\mathbb{R})$ and $\|u_k-u\|_\infty\rightarrow0$, as $k\rightarrow\infty$. Therefore, there is a constant $c_1>0$ such that
  \begin{eqnarray}\label{Oe9}
  \vert\nabla G(t,u_k(t))-\nabla G(t,u(t))\vert\leq c_1,\quad\forall t\in J,
  \end{eqnarray}
  which yields
  {\setlength\arraycolsep{2pt}
  \begin{eqnarray}\label{Oe10}
  &&\left\vert\int_{J^0}(\nabla G(t,u_k(t))-\nabla G(t,u(t)))(u_k(t)-u(t))\Delta t\right\vert\nonumber\\
  &\leq&c_1b\|u_k-u\|_\infty\nonumber\\
  &\rightarrow&0,\quad \mathrm{as}\,\, k\rightarrow\infty.
  \end{eqnarray}}
  Furthermore, it follows from the boundedness of $\{u_k\}$ in $W_{\Delta,a^+}^{\alpha,p}$ that
  {\setlength\arraycolsep{2pt}
  \begin{eqnarray}\label{Oe11}
  &&\left[(\beta+\varrho\|u_k\|^p)^{p-1}-(\beta+\varrho\|u\|^p)^{p-1}\right]
  \int_{J^0}\phi_p(^{\mathbb{T}}_aD^\alpha_t
  u(t))(\,^{\mathbb{T}}_aD^\alpha_tu_k(t)-\,^{\mathbb{T}}_aD^\alpha_tu(t))\Delta t\nonumber\\
  &=&\left[(\beta+\varrho\|u_k\|^p)^{p-1}-(\beta+\varrho\|u\|^p)^{p-1}\right]
  \left\langle\frac{1}{p}\int_{J^0}|^{\mathbb{T}}_aD_t^\alpha u(t)|^p\Delta t,u_k-u\right\rangle\nonumber\\
  &\rightarrow&0,\quad \mathrm{as}\,\, k\rightarrow\infty.
  \end{eqnarray}}
  In consideration of (\ref{Oe2}), one obtains
  {\setlength\arraycolsep{2pt}
  \begin{eqnarray}\label{Oe12}
  &&\langle\varphi'(u_k)-\varphi'(u),u_k-u\rangle+\int_{J^0}\lambda (t)(\nabla G(t,u_k(t))-\nabla G(t,u(t)))(u_k(t)-u(t))\Delta t\nonumber\\
  &=&(\beta+\varrho\|u_k\|^p)^{p-1}
  \int_{J^0}\phi_p(^{\mathbb{T}}_aD^\alpha_t
  u_k(t))(\,^{\mathbb{T}}_aD^\alpha_tu_k(t)-\,^{\mathbb{T}}_aD^\alpha_tu(t))\Delta t\nonumber\\
  &&-(\beta+\varrho\|u\|^p)^{p-1}\int_{J^0}\phi_p(^{\mathbb{T}}_aD^\alpha_t
  u(t))(\,^{\mathbb{T}}_aD^\alpha_tu_k(t)-\,^{\mathbb{T}}_aD^\alpha_tu(t))\Delta t\nonumber\\
  &=&(\beta+\varrho\|u_k\|^p)^{p-1}
  \int_{J^0}(\phi_p(^{\mathbb{T}}_aD^\alpha_t
  u_k(t))-\phi_p(^{\mathbb{T}}_aD^\alpha_t
  u(t)))(\,^{\mathbb{T}}_aD^\alpha_tu_k(t)-\,^{\mathbb{T}}_aD^\alpha_tu(t))\Delta t\nonumber\\
  &&+\left[(\beta+\varrho\|u_k\|^p)^{p-1}-(\beta+\varrho\|u\|^p)^{p-1}\right]\nonumber\\
  &&\times\int_{J^0}(\phi_p(^{\mathbb{T}}_aD^\alpha_t
  u(t))(\,^{\mathbb{T}}_aD^\alpha_tu_k(t)-(\,^{\mathbb{T}}_aD^\alpha_tu(t))\Delta t,
  \end{eqnarray}}
  which together with (\ref{Oe8})-(\ref{Oe12}) yields
  \begin{eqnarray}\label{Oe13}
  \int_{J^0}(\phi_p(^{\mathbb{T}}_aD^\alpha_t
  u_k(t))-\phi_p(^{\mathbb{T}}_aD^\alpha_t
  u(t)))(\,^{\mathbb{T}}_aD^\alpha_tu_k(t)-\,^{\mathbb{T}}_aD^\alpha_tu(t))\Delta t \rightarrow0,\quad \mathrm{as}\,\, k\rightarrow\infty.
  \end{eqnarray}
  Taking into consideration of (2.10) in \cite{O4}, we can find two positive constants $c_2$, $c_3$ such that
    {\setlength\arraycolsep{2pt}
  \begin{eqnarray}\label{Oe14}
  &&\int_{J^0}(\phi_p(^{\mathbb{T}}_aD^\alpha_t
  u_k(t))-\phi_p(^{\mathbb{T}}_aD^\alpha_t
  u(t)))(\,^{\mathbb{T}}_aD^\alpha_tu_k(t)-\,^{\mathbb{T}}_aD^\alpha_tu(t))\Delta t\nonumber\\
  &\geq&
\begin{cases}
c_2\int_{J^0}\vert{^{\mathbb{T}}_aD^\alpha_t}u_k(t)-{^{\mathbb{T}}_aD^\alpha_t}u(t)\vert^p\Delta t,\quad p\geq2,\\
c_3\int_{J^0}\frac{\vert{^{\mathbb{T}}_aD^\alpha_t}u_k(t)-{^{\mathbb{T}}_aD^\alpha_t}u(t)\vert^2}
{(\vert{^{\mathbb{T}}_aD^\alpha_t}u_k(t)\vert+\vert{^{\mathbb{T}}_aD^\alpha_t}u(t)\vert)^{p-2}}\Delta t,\quad 1<p<2.
\end{cases}
  \end{eqnarray}}
When $1<p<2$, with an eye to Proposition \ref{17} and $(\vert x\vert+\vert y\vert)^p\leq2^{p-1}(\vert x\vert^p+\vert y\vert^p)(\,\,\forall x,y\in\mathbb{R},\,\,p>0)$, one obtains
{\setlength\arraycolsep{2pt}
  \begin{eqnarray}\label{Oe15}
  &&\int_{J^0}\vert{^{\mathbb{T}}_aD^\alpha_t}u_k(t)-{^{\mathbb{T}}_aD^\alpha_t}u(t)\vert^p\Delta t\nonumber\\
  &=&
  \int_{J^0}\frac{\vert{^{\mathbb{T}}_aD^\alpha_t}u_k(t)-{^{\mathbb{T}}_aD^\alpha_t}u(t)\vert^p}
  {(\vert{^{\mathbb{T}}_aD^\alpha_t}u_k(t)\vert+\vert{^{\mathbb{T}}_aD^\alpha_t}u(t)\vert)^{\frac{p(2-p)}{2}}}
  (\vert{^{\mathbb{T}}_aD^\alpha_t}u_k(t)\vert+\vert{^{\mathbb{T}}_aD^\alpha_t}u(t)\vert)^{\frac{p(2-p)}{2}}\Delta t\nonumber\\
  &\leq&
  \left\{\int_{J^0}\left[\frac{\vert{^{\mathbb{T}}_aD^\alpha_t}u_k(t)-{^{\mathbb{T}}_aD^\alpha_t}u(t)\vert^p}
  {(\vert{^{\mathbb{T}}_aD^\alpha_t}u_k(t)\vert+\vert{^{\mathbb{T}}_aD^\alpha_t}u(t)\vert)^{\frac{p(2-p)}{2}}}\right]
  ^{\frac{2}{p}}\Delta t\right\}^{\frac{p}{2}}\nonumber\\
  &&\times\left\{\int_{J^0}\left[(\vert{^{\mathbb{T}}_aD^\alpha_t}u_k(t)\vert+\vert{^{\mathbb{T}}_aD^\alpha_t}u(t)\vert)
  ^{\frac{p(2-p)}{2}}\right]^{\frac{2}{2-p}}\Delta t\right\}^{\frac{2-p}{2}}\nonumber\\
  &=&\left[\int_{J^0}\frac{\vert{^{\mathbb{T}}_aD^\alpha_t}u_k(t)-{^{\mathbb{T}}_aD^\alpha_t}u(t)\vert^2}
  {(\vert{^{\mathbb{T}}_aD^\alpha_t}u_k(t)\vert+\vert{^{\mathbb{T}}_aD^\alpha_t}u(t)\vert)^{2-p}}\Delta t\right]^{\frac{p}{2}}\nonumber\\
  &&\times\left[\int_{J^0}(\vert{^{\mathbb{T}}_aD^\alpha_t}u_k(t)\vert+\vert{^{\mathbb{T}}_aD^\alpha_t}u(t)\vert)
  ^p\Delta t\right]^{\frac{2-p}{2}}\nonumber\\
  &\leq&\left[\int_{J^0}\frac{\vert{^{\mathbb{T}}_aD^\alpha_t}u_k(t)-{^{\mathbb{T}}_aD^\alpha_t}u(t)\vert^2}
  {(\vert{^{\mathbb{T}}_aD^\alpha_t}u_k(t)\vert+\vert{^{\mathbb{T}}_aD^\alpha_t}u(t)\vert)^{2-p}}\Delta t\right]^{\frac{p}{2}}\nonumber\\
  &&\times\left[\int_{J^0}2^{p-1}(\vert{^{\mathbb{T}}_aD^\alpha_t}u_k(t)\vert^p+\vert{^{\mathbb{T}}_aD^\alpha_t}u(t)\vert^p)
  \Delta t\right]^{\frac{2-p}{2}}\nonumber\\
  &=&\left[\int_{J^0}\frac{\vert{^{\mathbb{T}}_aD^\alpha_t}u_k(t)-{^{\mathbb{T}}_aD^\alpha_t}u(t)\vert^2}
  {(\vert{^{\mathbb{T}}_aD^\alpha_t}u_k(t)\vert+\vert{^{\mathbb{T}}_aD^\alpha_t}u(t)\vert)^{2-p}}\Delta t\right]^{\frac{p}{2}}2^{\frac{(p-1)(2-p)}{2}}(\|u_k\|^p+\|u\|^p)^{\frac{2-p}{2}}.
  \end{eqnarray}}
Therefore, we have
  {\setlength\arraycolsep{2pt}
  \begin{eqnarray}\label{Oe16}
  &&\int_{J^0}\frac{\vert{^{\mathbb{T}}_aD^\alpha_t}u_k(t)-{^{\mathbb{T}}_aD^\alpha_t}u(t)\vert^2}
  {(\vert{^{\mathbb{T}}_aD^\alpha_t}u_k(t)\vert+\vert{^{\mathbb{T}}_aD^\alpha_t}u(t)\vert)^{2-p}}\Delta t\nonumber\\
  &\geq&\left[\frac{1}{2^{\frac{(p-1)(2-p)}{2}}(\|u_k\|^p+\|u\|^p)^{\frac{2-p}{2}}}\|u_k-u\|^p\right]
  ^{\frac{2}{p}}\nonumber\\
  &=&2^{\frac{(p-1)(p-2)}{p}}(\|u_k\|^p+\|u\|^p)^{\frac{p-2}{p}}\|u_k-u\|^2,
  \end{eqnarray}}
which together with (\ref{Oe14}) implies
  {\setlength\arraycolsep{2pt}
  \begin{eqnarray}\label{Oe17}
  &&\int_{J^0}(\phi_p(^{\mathbb{T}}_aD^\alpha_t
  u_k(t))-\phi_p(^{\mathbb{T}}_aD^\alpha_t
  u(t)))(\,^{\mathbb{T}}_aD^\alpha_tu_k(t)-\,^{\mathbb{T}}_aD^\alpha_tu(t))\Delta t\nonumber\\
  &\geq&c_3\int_{J^0}\frac{\vert{^{\mathbb{T}}_aD^\alpha_t}u_k(t)-{^{\mathbb{T}}_aD^\alpha_t}u(t)\vert^2}
  {(\vert{^{\mathbb{T}}_aD^\alpha_t}u_k(t)\vert+\vert{^{\mathbb{T}}_aD^\alpha_t}u(t)\vert)^{p-2}}\Delta t\nonumber\\
  &\geq&c_32^{\frac{(p-1)(p-2)}{p}}(\|u_k\|^p+\|u\|^p)^{\frac{p-2}{p}}\|u_k-u\|^2,\quad1<p<2.
  \end{eqnarray}}
When $p>2$, taking (\ref{Oe14}) into account, one obtains
  {\setlength\arraycolsep{2pt}
  \begin{eqnarray}\label{Oe18}
  &&\int_{J^0}(\phi_p(^{\mathbb{T}}_aD^\alpha_t
  u_k(t))-\phi_p(^{\mathbb{T}}_aD^\alpha_t
  u(t)))(\,^{\mathbb{T}}_aD^\alpha_tu_k(t)-\,^{\mathbb{T}}_aD^\alpha_tu(t))\Delta t\nonumber\\
  &\geq&c_2\|u_k-u\|^p,\quad p>2.
  \end{eqnarray}}
As a consequence, combining with (\ref{Oe13}), (\ref{Oe17}) and (\ref{Oe18}), one sees
  \begin{eqnarray}\label{Oe19}
  \|u_k-u\|\rightarrow0,\quad \mathrm{as}\,\, k\rightarrow\infty.
  \end{eqnarray}
Therefore, $\varphi$ satisfies the $(PS)$ condition in $W_{\Delta,a^+}^{\alpha,p}$.

  \item [$\mathrm{step}\,\,2.$]
  $\varphi$ satisfies the $(\mathbf{A_1})$ condition in Lemma \ref{OL1}, which can be explained by the following reasons:

Taking $(\mathbf{G_3})$ into account, we can find two positive constants $\varepsilon'\in(0,1)$ and $\delta$ such that
  \begin{eqnarray}\label{Oe20}
  G(t,x)\leq\frac{(1-\varepsilon')\beta^{p-1}}{\lambda^0p\frac{b^{\alpha p}}{\Gamma^p(\alpha+1)}}\vert x\vert^p,\quad\forall\,t\in J,\,\,\mathrm{with}\,\,\vert x\vert\leq\delta.
  \end{eqnarray}
Let $\rho=\frac{\delta}{\frac{b^{\alpha-\frac{1}{p}}}{\Gamma(\alpha)((\alpha-1)q+1)^{\frac{1}{q}}}}$ and $\delta=\frac{\varepsilon'\beta^{p-1}\rho^p}{p}$. Hence, taking (\ref{37}) into consideration, one has
  \begin{eqnarray}\label{Oe21}
  \|u\|_\infty\leq\frac{b^{\alpha-\frac{1}{p}}}{\Gamma(\alpha)((\alpha-1)q+1)^{\frac{1}{q}}}\|u\|,
  \quad\forall\,u\in W_{\Delta,a^+}^{\alpha,p},\,\, \mathrm{with}\,\,\|u\|=\rho,
  \end{eqnarray}
which together with (\ref{33}), (\ref{37}), (\ref{Oe1}) and (\ref{Oe20}) implies
\begin{align}\label{Oe22}
\varphi(u)
=&\frac{1}{\varrho p^2}\left(\beta+\varrho\int_{J^0}\vert{^{\mathbb{T}}_aD_t^\alpha } u(t)\vert^p\Delta t\right)^p
-\int_{J^0} \lambda (t)G(t,u(t))\Delta t-\frac{\beta^p}{\varrho p^2}\nonumber\\
=&\frac{1}{\varrho p^2}(\beta+\varrho\|u\|^p)^p
-\int_{J^0} \lambda (t)G(t,u(t))\Delta t-\frac{\beta^p}{\varrho p^2}\nonumber\\
\geq&\frac{\beta^{p-1}}{p}\|u\|^p-\lambda^0\frac{(1-\varepsilon')\beta^{p-1}}{\lambda^0p\frac{b^{\alpha p}}{\Gamma^p(\alpha+1)}}\int_{J^0} \vert u(t)\vert^p\Delta t\nonumber\\
\geq&\frac{\beta^{p-1}}{p}\|u\|^p-\lambda^0\frac{(1-\varepsilon')\beta^{p-1}}{\lambda^0p\frac{b^{\alpha p}}{\Gamma^p(\alpha+1)}}\frac{b^{\alpha p}}{\Gamma^p(\alpha+1)}\|_a^\mathbb{T}D_t^\alpha u\|_{L_\Delta^p}^p\nonumber\\
=&\frac{\beta^{p-1}}{p}\|u\|^p-\lambda^0\frac{(1-\varepsilon')\beta^{p-1}}{\lambda^0p}\|u\|^p\nonumber\\
=&\frac{\varepsilon'\beta^{p-1}}{p}\|u\|^p\nonumber\\
=&\sigma,\quad\forall\,u\in W_{\Delta,a^+}^{\alpha,p},\,\, \mathrm{with}\,\,\|u\|=\rho,
\end{align}
which means that the $(\mathbf{A_1})$ condition in Lemma \ref{OL1} is satisfied.

  \item [$\mathrm{step}\,\,3.$]
  $\varphi$ satisfies the $(\mathbf{A_2})$ condition in Lemma \ref{OL1}. Here are some reasons why:

For $s\in\mathbb{R}$, $\vert x\vert\geq M$ and $t\in J$, let
  \begin{eqnarray}\label{Oe23}
  F(s)=G(t,sx),\quad\quad H(s)=F'(s)-\frac{\mu}{s}F(s)
  \end{eqnarray}
In view of $(\mathbf{G_2})$, when $s\geq\frac{M}{\vert x\vert}$, one obtains
  \begin{eqnarray*}
  H(s)=\frac{\nabla G(t,sx)sx-\mu G(t,sx)}{s}\geq0
  \end{eqnarray*}
In addition, taking the expression of $F(\cdot)$ and $H(\cdot)$ in (\ref{Oe23}) into account, we can easily obtain that $F(s)$ satisfies
  \begin{eqnarray*}
  F'(s)=H(s)+\frac{\mu}{s}F(s)
  \end{eqnarray*}
Therefore, when $s\geq\frac{M}{\vert x\vert}$, we have
  \begin{eqnarray*}
  G(t,sx)=s^\mu\left[G(t,x)+\int_1^s\tau^{-\mu}H(\tau)d\tau\right].
  \end{eqnarray*}
So, for $\vert x\vert\geq M$ and $t\in J$, together with $(\mathbf{G_1})$, one obtains
  \begin{eqnarray*}
  \left(\frac{M}{\vert x\vert}\right)^\mu G(t,x)\leq G\left(t,x\frac{M}{\vert x\vert}\right)\leq\max_{\vert x\vert\leq M}a(\vert x\vert)b(t),
  \end{eqnarray*}
which implies that
  \begin{eqnarray*}
  G(t,x)\leq\frac{\vert x\vert^\mu}{M^\mu}\max_{\vert x\vert\leq M}a(\vert x\vert)b(t).
  \end{eqnarray*}
So, one gets
  \begin{eqnarray}\label{Oe24}
  G(t,x)\geq\frac{\vert x\vert^\mu}{M^\mu}\min_{\vert x\vert\leq M}a(\vert x\vert) b(t),
  \end{eqnarray}
Therefore, for any $u\in W_{\Delta,a^+}^{\alpha,p}\setminus\{0\}$, $\xi\in\mathbb{R}^+$, it follows from (\ref{37}), (\ref{Oe1}), (\ref{Oe24}) and $\mu>p^2$ that
\begin{align}\label{Oe25}
\varphi(\xi u)
=&\frac{1}{\varrho p^2}\left(\beta+\varrho\int_{J^0}\vert{^{\mathbb{T}}_aD_t^\alpha} (\xi u)(t)\vert^p\Delta t\right)^p
-\int_{J^0} \lambda (t)G(t,\xi u(t))\Delta t-\frac{\beta^p}{\varrho p^2}\nonumber\\
=&\frac{1}{\varrho p^2}(\beta+\varrho\|\xi u\|^p)^p
-\int_{J^0} \lambda (t)G(t,\xi u(t))\Delta t-\frac{\beta^p}{\varrho p^2}\nonumber\\
\leq&\frac{1}{\varrho p^2}(\beta+\varrho\|\xi u\|^p)^p-\frac{\lambda_0\min\limits_{\vert\xi x\vert\leq M}a(\vert\xi x\vert)}{M^\mu}\vert\xi\vert^\mu\| u\|_\infty^\mu\int_{J^0}b(t)\Delta t-\frac{\beta^p}{\varrho p^2}\nonumber\\
=&\frac{1}{\varrho p^2}(\beta+\varrho\|\xi u\|^p)^p-\frac{\lambda_0\min\limits_{\vert\xi x\vert\leq M}a(\vert\xi x\vert)\|b\|_{L_\Delta^1}\| u\|_\infty^\mu}{M^\mu}\vert\xi\vert^\mu-\frac{\beta^p}{\varrho p^2}\nonumber\\
\rightarrow&-\infty,\quad \mathrm{as}\,\,\xi\rightarrow\infty.
\end{align}
Therefore, taking $\xi_0$ large enough and letting $e=\xi_0u$, we have $\varphi(e)\leq0$. As a consequence, $\varphi$ also satisfies the $(\mathbf{A_2})$ condition in Lemma \ref{OL1}.
\end{itemize}
As a result, we get a critical point $u^*$ of $\varphi$ satisfying $\varphi(u^*)\geq\sigma>0$, and so $u^*$ is a nontrivial solution of KFBVP$_{\mathbb{T}}$  (\ref{O1}). All in all, Theorem \ref{OT1} is proved by $\mathbf{Step\,1}$-$\mathbf{Step\,3}$.
\end{proof}

\begin{theorem}\label{OT2}
Let $\alpha\in\left(\frac{1}{p},1\right]$, suppose that $G$ satisfies $(\mathbf{G_1})$ and the following conditions:
\begin{itemize}
  \item [$(G_4)$]
  There are a constant $1<r_1<p^2$ and a function $d\in L^1_\Delta(J,\mathbb{R}^+)$ such that
  \begin{eqnarray*}
  \vert\nabla G(t,x)\vert\leq r_1d(t)\vert x\vert^{r_1-1},\quad \forall (t,x)\in J\times\mathbb{R}.
  \end{eqnarray*}
  \item [$(G_5)$]
  There is an open interval ${\mathbb{I}}\subset J$ and three constants $\eta,\delta>0$, $1<r_2<p^2$ such that
  \begin{eqnarray*}
  G(t,x)\geq \eta\vert x\vert^{r_2},\quad \forall (t,x)\in {\mathbb{I}}_{\mathbb{T}}\times\vert-\delta,\delta\vert.
  \end{eqnarray*}
\end{itemize}
Then, KFBVP$_{\mathbb{T}}$ (\ref{O1}) has at least one nontrivial weak solution.
\end{theorem}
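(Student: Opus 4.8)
The plan is to obtain a nontrivial critical point of the functional $\varphi$ by the direct minimization method of Lemma \ref{OL2}, rather than the mountain-pass geometry used for Theorem \ref{OT1}. Concretely, I would establish three facts: (i) $\varphi$ is coercive, hence bounded from below; (ii) $\varphi$ satisfies the $(PS)$ condition; and (iii) $\inf_{W_{\Delta,a^+}^{\alpha,p}}\varphi<0=\varphi(0)$, so that the minimizer furnished by Lemma \ref{OL2} cannot be the trivial function. Throughout I would keep in mind that $G(t,0)=0$ (which is what makes $\varphi(0)=0$, as already used in Theorem \ref{OT1}), and that $N=1$ here.

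First I would prove coercivity. Integrating $(\mathbf{G_4})$ from the origin gives the potential estimate $\vert G(t,x)\vert\leq d(t)\vert x\vert^{r_1}$, so by the continuous embedding (\ref{34}), $\|u\|_\infty\leq C_\infty\|u\|$ with $C_\infty=\frac{b^{\alpha-\frac1p}}{\Gamma(\alpha)((\alpha-1)q+1)^{1/q}}$, one gets $\int_{J^0}\lambda(t)G(t,u)\Delta t\leq\lambda^0\|d\|_{L_\Delta^1}C_\infty^{r_1}\|u\|^{r_1}$. Since $(\beta+\varrho\|u\|^p)^p\geq\varrho^p\|u\|^{p^2}$, the Kirchhoff term controls the energy from below and
\[
\varphi(u)\geq\frac{\varrho^{p-1}}{p^2}\|u\|^{p^2}-\lambda^0\|d\|_{L_\Delta^1}C_\infty^{r_1}\|u\|^{r_1}-\frac{\beta^p}{\varrho p^2}.
\]
Because $r_1<p^2$, the $\|u\|^{p^2}$ term dominates, so $\varphi(u)\to+\infty$ as $\|u\|\to\infty$ and in particular $\varphi$ is bounded below.

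Next I would verify $(PS)$. Coercivity forces any sequence with $\vert\varphi(u_k)\vert\leq K$ to be bounded in $W_{\Delta,a^+}^{\alpha,p}$; then by reflexivity (Theorem \ref{31}) and the compact embedding (Remark \ref{OR1}) I may assume $u_k\rightharpoonup u$ in $W_{\Delta,a^+}^{\alpha,p}$ and $u_k\to u$ in $C(J,\mathbb{R})$. From here the argument is a verbatim transcription of Step 1 of Theorem \ref{OT1}: one runs the chain (\ref{Oe8})--(\ref{Oe19}), using the monotonicity ($S_+$) inequality for $\phi_p$ recorded in (\ref{Oe14}) to upgrade weak to strong convergence $\|u_k-u\|\to0$. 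Thus $(PS)$ holds and Lemma \ref{OL2} yields a critical point $u^*$ with $\varphi(u^*)=\inf\varphi$.

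Finally, and this is where the real work lies, I would show $\inf\varphi<0$ so that $u^*\neq0$. Choose a fixed $w\in C_{c,rd}^\infty$ with support in $\mathbb{I}_{\mathbb{T}}$ and $\|w\|_\infty\leq1$, and test $\varphi$ on $\xi w$ for small $\xi\in(0,\delta]$. Since $w$ vanishes off $\mathbb{I}_{\mathbb{T}}$ and $G(t,0)=0$, condition $(\mathbf{G_5})$ gives $\int_{J^0}\lambda(t)G(t,\xi w)\Delta t\geq\lambda_0\eta\,\xi^{r_2}\int_{\mathbb{I}_{\mathbb{T}}}\vert w\vert^{r_2}\Delta t$, while near the origin the Kirchhoff term expands as $\frac{1}{\varrho p^2}(\beta+\varrho\xi^p\|w\|^p)^p-\frac{\beta^p}{\varrho p^2}=\frac{\beta^{p-1}}{p}\|w\|^p\,\xi^p+O(\xi^{2p})$, so that
\[
\varphi(\xi w)\leq\frac{\beta^{p-1}}{p}\|w\|^p\,\xi^p-\lambda_0\eta\Big(\int_{\mathbb{I}_{\mathbb{T}}}\vert w\vert^{r_2}\Delta t\Big)\xi^{r_2}+o(\xi^p).
\]
The sign of the right-hand side for $\xi\to0^+$ is governed entirely by which of the two exponents is smaller, so the decisive point is that the growth exponent $r_2$ of $(\mathbf{G_5})$ falls below the leading exponent $p$ of the Kirchhoff term at the origin; then $\xi^{r_2}$ dominates and $\varphi(\xi w)<0$ for $\xi$ small, giving $\inf\varphi<0$ and hence $u^*\neq0$. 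I expect this exponent comparison to be the main obstacle and the only place $(\mathbf{G_5})$ is genuinely used, since everything else is coercivity bookkeeping and a reuse of the $S_+$ machinery already built for Theorem \ref{OT1}.
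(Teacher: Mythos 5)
Your strategy coincides with the paper's: coercivity of $\varphi$ from $(\mathbf{G_4})$ via the potential bound $\vert G(t,x)\vert\leq d(t)\vert x\vert^{r_1}$ and the embedding (\ref{34}), the $(PS)$ condition by rerunning the $S_+$ machinery of Step 1 of Theorem \ref{OT1} on the now-bounded sequence, a global minimizer from Lemma \ref{OL2}, and nontriviality by evaluating $\varphi$ on small multiples of a function supported in $\mathbb{I}_{\mathbb{T}}$ and invoking $(\mathbf{G_5})$. Your handling of the nontriviality integral is in fact slightly cleaner than the paper's: since your test function vanishes off $\mathbb{I}_{\mathbb{T}}$ and $G(t,0)=0$, the integral over $J^0\setminus\mathbb{I}_{\mathbb{T}}$ vanishes exactly, whereas the paper passes from $\int_{J^0}$ to $\int_{\mathbb{I}}$ without justifying that $G\geq 0$ there.

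The one step that does not close is precisely the one you flag yourself. Because $\beta>0$, the Kirchhoff term near the origin is $\frac{1}{\varrho p^2}(\beta+\varrho\xi^p\Vert w\Vert^p)^p-\frac{\beta^p}{\varrho p^2}=\frac{\beta^{p-1}}{p}\Vert w\Vert^p\xi^p+O(\xi^{2p})$, so to force $\varphi(\xi w)<0$ for small $\xi$ you need $r_2<p$; but $(\mathbf{G_5})$ only supplies $1<r_2<p^2$, and for $p\leq r_2<p^2$ the positive $\xi^p$ term dominates (or is comparable to) the negative $\xi^{r_2}$ term, so the sign conclusion fails. You cannot derive $r_2<p$ from the stated hypotheses, so as written your nontriviality step is incomplete. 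It is worth noting that the paper's own proof makes exactly the same leap: (\ref{Oe27}) is the identical two-term comparison, and the paper concludes $\varphi(\varsigma u_0)<0$ for small $\varsigma$ ``because $1<r_2<p^2$,'' which is the correct comparison only when $\beta=0$ (where the Kirchhoff term degenerates to order $\xi^{p^2}$). So you have in effect isolated a genuine weakness in the published argument rather than introduced a new one; to repair either version one must either assume $r_2<p$ in $(\mathbf{G_5})$ or supply a different mechanism for $\inf\varphi<0$.
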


\begin{proof}
It is obvious that $\varphi(0)=0$, $\varphi\in C^1(W_{\Delta,a^+}^{\alpha,p},\mathbb{R})$, where $W_{\Delta,a^+}^{\alpha,p}$ is a real Banach space from Theorem \ref{3.6}, next, we will take the help of Lemma \ref{OL2} to demonstrate
\begin{itemize}
  \item [($1$).]
  $\varphi$ is bounded from below in $W_{\Delta,a^+}^{\alpha,p}$, which can be explained by the following reasons:

  Taking account of $(\mathbf{G_4})$, (\ref{34}) and (\ref{37}),  we get
\begin{align}\label{Oe26}
\varphi(u)
=&\frac{1}{\varrho p^2}\left(\beta+\varrho\int_{J^0}\vert{^{\mathbb{T}}_aD_t^\alpha} u(t)\vert^p\Delta t\right)^p
-\int_{J^0} \lambda (t)G(t,u(t))\Delta t-\frac{\beta^p}{\varrho p^2}\nonumber\\
=&\frac{1}{\varrho p^2}(\beta+\varrho\|u\|^p)^p
-\int_{J^0} \lambda (t)G(t,u(t))\Delta t-\frac{\beta^p}{\varrho p^2}\nonumber\\
\geq&\frac{1}{\varrho p^2}(\beta+\varrho\|u\|^p)^p
-\lambda^0\int_{J^0} d(t)\vert u(t)\vert^{r_1}\Delta t-\frac{\beta^p}{\varrho p^2}\nonumber\\
\geq&\frac{1}{\varrho p^2}(\beta+\varrho\|u\|^p)^p
-\lambda^0\|d\|_{L_\Delta^1}\|u\|_\infty^{r_1}-\frac{\beta^p}{\varrho p^2}\nonumber\\
\geq&\frac{1}{\varrho p^2}(\beta+\varrho\|u\|^p)^p
-\frac{\lambda^0\|d\|_{L_\Delta^1}b^{r_1(\alpha-\frac{1}{p})}}{\Gamma^{r_1}(\alpha)((\alpha-1)q+1)^{\frac{r_1}{q}}}
\|u\|^{r_1}-\frac{\beta^p}{\varrho p^2}.
\end{align}
Since $1<r_1<p^2$, (\ref{Oe26}) yields $\varphi(u)\rightarrow\infty$ as $\|u\|\rightarrow\infty$. Consequently, $\varphi$ is bounded from below in $W_{\Delta,a^+}^{\alpha,p}$.

  \item [($2$).]
  $\varphi$ satisfies the $(PS)$ condition in $W_{\Delta,a^+}^{\alpha,p}$. The argument is as follows:

Let $\{u_k\}\subset W_{\Delta,a^+}^{\alpha,p}$ be a sequence such that (\ref{Oe4}) holds. So, together with proof by contradiction and (\ref{Oe26}), it is easily for us to see that $\{u_k\}\subset W_{\Delta,a^+}^{\alpha,p}$ is bounded in $W_{\Delta,a^+}^{\alpha,p}$. The remainder of proof is similar to the proof of Step 1 in Proof of Theorem \ref{OT1}. We omit the details.

Consequently, combining with Lemma \ref{OL2}, $\mathbf{(1)}$ and $\mathbf{(2)}$ in Proof of Theorem \ref{OT2}, one gets $c=\inf\limits_{W_{\Delta,a^+}^{\alpha,p}}\varphi(u)$ which is a critical value of $\varphi$. In other words, there is a critical point $u^*\in W_{\Delta,a^+}^{\alpha,p}$ such that $\varphi(u^*)=c$.

  \item [($3$).]
  $u^*\neq0$, for the following reasons:

Let $u_0\in (W^{1,2}_{\Delta,T}(\mathbb{I},\mathbb{R})\cap W_{\Delta,a^+}^{\alpha,p})\setminus\{0\}$ (\cite{2}) and $\|u_0\|_\infty=1$, it follows from (\ref{Oe1}), (\ref{37}), $\mathbf{(G_5)}$ and (\ref{33}) that
\begin{align}\label{Oe27}
\varphi(\varsigma u_0)
=&\frac{1}{\varrho p^2}\left(\beta+\varrho\int_{J^0}\vert{^{\mathbb{T}}_aD_t^\alpha} (\varsigma u_0)(t)\vert^p\Delta t\right)^p
-\int_{J^0} \lambda (t)G(t,\varsigma u_0(t))\Delta t-\frac{\beta^p}{\varrho p^2}\nonumber\\
\leq&\frac{1}{\varrho p^2}(\beta+\varrho\|\varsigma u_0\|^p)^p
-\int_{\mathbb{I}} \lambda (t)G(t,\varsigma u_0(t))\Delta t-\frac{\beta^p}{\varrho p^2}\nonumber\\
\leq&\frac{1}{\varrho p^2}(\beta+\varrho\|\varsigma u_0\|^p)^p
-\lambda_0\eta\varsigma^{r_2}\int_{\mathbb{I}} \vert u_0(t)\vert^{r_2}\Delta t-\frac{\beta^p}{\varrho p^2},\quad 0<s\leq\delta.
\end{align}
Because of $1<r_2<p^2$, (\ref{Oe27}) implies $\varphi(\varsigma u_0)<0$ for $s>0$ small enough. Therefore, $u^*\neq0$.
\end{itemize}
All in all, $u^*\in W_{\Delta,a^+}^{\alpha,p}$ is a nontrivial critical point of $\varphi$, and consequently, $u^*$ is a nontrivial solution of KFBVP$_{\mathbb{T}}$ (\ref{O1}). Hence, we complete the proof of Theorem \ref{OT2}.
\end{proof}

\begin{theorem}\label{OT3}
Let $\alpha\in\left(\frac{1}{p},1\right]$, suppose that $G$ satisfies $(\mathbf{G_1})$, $(\mathbf{G_4})$, $(\mathbf{G_5})$ and the following conditions:
\begin{itemize}
  \item [$(G_6)$]
  There are a constant $1<r_1<p^2$ and a function $d\in L^1_\Delta(J,\mathbb{R}^+)$ such that
  \begin{eqnarray*}
  \nabla G(t,x)=\nabla G(t,-x),\quad \forall (t,x)\in J\times\mathbb{R}.
  \end{eqnarray*}
  \end{itemize}
Then, KFBVP$_{\mathbb{T}}$ (\ref{O1}) possesses infinitely many nontrivial weak solutions.
\end{theorem}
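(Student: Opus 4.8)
The plan is to obtain infinitely many critical points of the energy functional $\varphi$ from (\ref{Oe1}) by applying the genus result Lemma \ref{OL3} together with Remark \ref{le1}. First I would record the structural hypotheses that lemma demands. Exactly as in Theorems \ref{OT1} and \ref{OT2} we have $\varphi(0)=0$, $\varphi\in C^1(W_{\Delta,a^+}^{\alpha,p},\mathbb{R})$, and $W_{\Delta,a^+}^{\alpha,p}$ is a real Banach space by Theorem \ref{3.6}. The genuinely new ingredient is the symmetry hypothesis $(\mathbf{G_6})$: since the Kirchhoff term $\frac{1}{\varrho p^2}(\beta+\varrho\int_{J^0}|{}^{\mathbb{T}}_aD_t^\alpha u|^p\Delta t)^p$ depends on $u$ only through $|{}^{\mathbb{T}}_aD_t^\alpha u|$ it is automatically even in $u$, and $(\mathbf{G_6})$ renders the potential $G(t,\cdot)$, hence $u\mapsto\int_{J^0}\lambda(t)G(t,u(t))\Delta t$, even. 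Consequently $\varphi$ is an even $C^1$ functional, which is what Lemma \ref{OL3} requires.

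Next I would verify the $(PS)$ condition together with boundedness from below. Because the present hypotheses include $(\mathbf{G_1})$ and $(\mathbf{G_4})$, both facts follow verbatim from the proof of Theorem \ref{OT2}: the coercivity estimate (\ref{Oe26}) and $1<r_1<p^2$ give $\varphi(u)\to+\infty$ as $\|u\|\to\infty$, which yields both the lower bound and the boundedness of any $(PS)$ sequence, while the strong convergence of a weakly convergent subsequence is extracted as in Step 1 of the proof of Theorem \ref{OT1}, using the monotonicity inequalities (\ref{Oe14})--(\ref{Oe18}) for $\phi_p$. Since $\varphi$ is bounded below, every level $c_n=\inf_{A\in\Xi_n}\sup_{u\in A}\varphi(u)$ is finite whenever $\Xi_n\neq\emptyset$.

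The heart of the argument, and where $(\mathbf{G_5})$ enters, is to show that for each $n\in\mathbb{N}$ one has $\Xi_n\neq\emptyset$ and $c_n<0=\varphi(0)$. For a fixed $n$ I would select an $n$-dimensional subspace $E_n\subset W_{\Delta,a^+}^{\alpha,p}$ built from functions supported on $\mathbb{I}_{\mathbb{T}}$, the existence of which is guaranteed by the embedding $W^{1,2}_{\Delta,T}(\mathbb{I},\mathbb{R})\cap W_{\Delta,a^+}^{\alpha,p}$ already used in Step (3) of Theorem \ref{OT2}. The sphere $S_\varsigma=\{u\in E_n:\|u\|=\varsigma\}$ is closed and symmetric with $\gamma(S_\varsigma)=n$, so $S_\varsigma\in\Xi_n$ and $\Xi_n\neq\emptyset$. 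On $S_\varsigma$ I would invoke $(\mathbf{G_5})$ and the equivalence of all norms on the finite-dimensional space $E_n$ to estimate $\varphi(\varsigma u)$ exactly as in (\ref{Oe27}); for $\varsigma$ small enough the negative $(\mathbf{G_5})$ contribution dominates, so $\sup_{u\in S_\varsigma}\varphi(u)<0$. This forces $c_n\le\sup_{S_\varsigma}\varphi<0$, and combined with the lower bound from the previous step, $c_n\in\mathbb{R}$ with $c_n\neq\varphi(0)$. Lemma \ref{OL3}$(i)$ then shows each $c_n$ is a critical value.

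Finally I would conclude. If the values $c_n$ are pairwise distinct they already provide infinitely many critical values and hence infinitely many critical points. Otherwise $c_n=c_{n+1}=\cdots=c_{n+l}=c$ for some $n$ and some $l\ge1$, and Lemma \ref{OL3}$(ii)$ gives $\gamma(K_c)\ge l+1\ge2$; since a set of genus at least two contains infinitely many points, Remark \ref{le1} yields infinitely many distinct critical points of $\varphi$ in $W_{\Delta,a^+}^{\alpha,p}$. Each such critical point is a nontrivial weak solution of KFBVP$_{\mathbb{T}}$ (\ref{O1}), completing the proof. I expect the genus construction of the third paragraph to be the main obstacle: one must produce, for every $n$, a symmetric subset of genus at least $n$ on which $\varphi$ is strictly negative, which requires both the finite-dimensional subspace supported in $\mathbb{I}_{\mathbb{T}}$ and the careful comparison near the origin — via norm equivalence on $E_n$ — between the Kirchhoff term and the $(\mathbf{G_5})$ term as $\varsigma\to0^+$.
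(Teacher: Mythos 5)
Your proposal follows essentially the same route as the paper: evenness of $\varphi$ from $(\mathbf{G_6})$, boundedness from below and the $(PS)$ condition imported from the proof of Theorem \ref{OT2}, and, for each $n$, a compact symmetric set of genus $n$ inside a finite-dimensional subspace of functions supported in $\mathbb{I}$ on which $\varphi$ is strictly negative, so that Lemma \ref{OL3} and Remark \ref{le1} yield infinitely many critical points. The only cosmetic difference is that the paper realizes this set concretely by taking $n$ functions supported on $n$ disjoint subintervals of $\mathbb{I}$, so that $\|\sum_i\kappa_iu_i\|^p=\sum_i\vert\kappa_i\vert^p$ and the set is an odd homeomorphic image of a sphere in $\mathbb{R}^n$ (giving $\gamma=n$ directly via Propositions 7.5 and 7.7 of \cite{O3}), whereas you appeal to equivalence of norms on the finite-dimensional subspace; both deliver the same conclusion.
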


\begin{proof}
Lemma \ref{OL3} is a powerful tool for us to clarify our conclusion.

Considering $\mathbf{(1)}$ and $\mathbf{(2)}$ in Proof of Theorem \ref{OT2}, we see that $\varphi\in C^1(W_{\Delta,a^+}^{\alpha,p},\mathbb{R})$ is bounded from below and satisfies the $(PS)$ condition. Furthermore, it follows from (\ref{Oe1}) and $(\mathbf{G_6})$ that $\varphi$ is even and $\varphi(0)=0$.

Fixing $n\in\mathbb{N}$, we take $n$ disjoint open intervals $\mathbb{I}_i$ such that
$ \mathop{\cup}\limits_{i=1}^n \mathbb{I}_i\subset\mathbb{I}$.

Let $u_i\in (W^{1,2}_{\Delta,T}(\mathbb{I}_i,\mathbb{R})\cap W_{\Delta,a^+}^{\alpha,p})\setminus\{0\}$ and $\|u_i\|=1$, and
  \begin{eqnarray}\label{Oe28}
  W_n=\mathrm{span}\{u_1,u_2,\cdots,u_n\},\nonumber\\
  D_n=\{u\in W_n\vert\|u\|=1\}.
  \end{eqnarray}
For $u\in W_n$, there are $\kappa_i\in\mathbb{R}$ such that
  \begin{eqnarray}\label{Oe29}
  u(t)=\sum_{i=1}^n\kappa_iu_i(t),\quad\forall\,t\in J.
  \end{eqnarray}
Consequently, one obtains
\begin{align}\label{Oe30}
\|u\|^p
=&\int_{J^0}\vert{^{\mathbb{T}}_aD_t^\alpha} u(t)\vert^p\Delta t\nonumber\\
=&\sum_{i=1}^n\vert\kappa_i\vert^p\int_{J^0}\vert{^{\mathbb{T}}_aD_t^\alpha} u_i(t)\vert^p\Delta t\nonumber\\
=&\sum_{i=1}^n\vert\kappa_i\vert^p\|u_i\|^p\nonumber\\
=&\sum_{i=1}^n\vert\kappa_i\vert^p,\quad \forall \,u\in W_n.
\end{align}
In consideration of (\ref{34}), (\ref{37}), (\ref{Oe1}) and ($\mathbf{G_5}$), for $0<\iota\leq\frac{\delta}{\frac{b^{\alpha-\frac{1}{p}}}{\Gamma(\alpha)((\alpha-1)q+1)
^{\frac{1}{q}}}\max\limits_{i=1,2,\cdots,n}\vert\kappa_i\vert}$ and $u\in D_n$, wet get
\begin{align}\label{Oe31}
\varphi(\iota u)
=&\frac{1}{\varrho p^2}\left(\beta+\varrho\int_{J^0}\vert{^{\mathbb{T}}_aD_t^\alpha} (\iota u)(t)\vert^p\Delta t\right)^p
-\int_{J^0} \lambda (t)G(t,\iota u(t))\Delta t-\frac{\beta^p}{\varrho p^2}\nonumber\\
=&\frac{1}{\varrho p^2}(\beta+\varrho\|\iota u\|^p)^p
-\sum_{i=1}^n\int_{\mathbb{I}_i} \lambda (t)G(t,\iota \kappa_iu_i(t))\Delta t-\frac{\beta^p}{\varrho p^2}\nonumber\\
\leq&\frac{1}{\varrho p^2}(\beta+\varrho\|\iota u_0\|^p)^p
-\lambda_0\eta\iota^{r_2}\sum_{i=1}^n\kappa_i^{r_2}\int_{\mathbb{I}_i} \vert u_i(t)\vert^{r_2}\Delta t-\frac{\beta^p}{\varrho p^2},
\end{align}
Since $1<r_2<p^2$, together with (\ref{Oe31}), there are two positive constants $\epsilon$, $\sigma$ such that
  \begin{eqnarray}\label{Oe32}
  \varphi(\sigma u)<-\epsilon,\quad\forall\,u\in D_n.
  \end{eqnarray}
Set
  \begin{eqnarray}\label{Oe33}
  &D_n^\sigma=\{\sigma u\vert u\in D_n\},\nonumber\\
  &\Pi=\left\{(\kappa_1,\kappa_2,\cdots,\kappa_n)\in\mathbb{R}^n\bigg\vert\sum\limits_{i=1}^n|\kappa_i|^p<\sigma^p\right\}.
  \end{eqnarray}
Hence, in view of (\ref{Oe32}), one has
  \begin{eqnarray}\label{Oe34}
  \varphi(u)<-\epsilon,\quad\forall\,u\in D_n^\sigma.
  \end{eqnarray}
Together with the fact of $\varphi$ is even and $\varphi(0)=0$, we obtain
  \begin{eqnarray}\label{Oe35}
  D_n^\sigma\subset\varphi^{-\epsilon}\subset\Xi.
  \end{eqnarray}
By (\ref{Oe30}), we see that the mapping $(\kappa_1,\kappa_2,\cdots,\kappa_n)\rightarrow\sum\limits_{i=1}^n\kappa_iu_i(t)$ from $\partial\Pi$ to $D_n^\sigma$ is odd and homeomorphic. As a result, combining with Propositions 7.5 and 7.7 in \cite{O3}, one gets
  \begin{eqnarray}\label{Oe36}
  \gamma(\varphi^{-\epsilon})\geq\gamma(D_n^\sigma)=n.
  \end{eqnarray}
Hence, $\varphi^{-\epsilon}\in\Xi_n$ and so $\Xi_n\neq0$. Let
  \begin{eqnarray}\label{Oe37}
  c_n=\inf_{A\in\Xi_n}\sup_{u\in A}\varphi(u).
  \end{eqnarray}
It follows from $\varphi$ is bounded from below that $-\infty<c_n\leq-\epsilon<0$. In other words, for any $n\in\mathbb{N}$, $c_n$ is a real negative number.

Consequently, considering Lemma \ref{OL3}, we see that $\varphi$ admits infinitely many nontrivial critical points, and so,  KFBVP$_{\mathbb{T}}$ (\ref{O1}) possesses infinitely many nontrivial weak solutions. The proof of Theorem \ref{OT3} is complete.
\end{proof}

\section{Conclusions}
\setcounter{equation}{0}
\indent

In this paper, a class of fractional Sobolev spaces on time scales is introduced through a new definition of the fractional derivative of Riemann $- $Liouville on time scales, and some basic properties of them are obtained. As an application, we study a class of Kirchhoff type fractional $p $- Laplace boundary value problems on time scales. The existence and multiplicity of nontrivial weak solutions are obtained by using mountain path theorem and genus properties. The results and methods of this paper can also be used to study the solvability of other boundary value problems on time scales. Nowadays, the concepts of fractional derivative on time scales in different senses are constantly put forward. Therefore, it is our future direction to study the theory and application of fractional Sobolev spaces on time scales introduced by fractional derivatives in other senses on time scales.

\end{document}